\newtheorem{theorem}{Theorem}[section]
\newtheorem{lemma}[theorem]{Lemma}
\newtheorem{proposition}[theorem]{Proposition}
\newtheorem{corollary}[theorem]{Corollary}
\newtheorem{fjraxiom}{FJR}
\newtheorem{cohftaxiom}{CFT}
\newtheorem{unitaxiom}{U}
\newtheorem{conjecture}[theorem]{Conjecture}
\theoremstyle{remark}
\newtheorem{remark}[theorem]{Remark}
\newtheorem{definition}[theorem]{Definition}
\numberwithin{equation}{subsection}
\newcommand{\cM}{\mathcal{M}}
\newcommand{\cA}{\mathcal{A}}
\newcommand{\cW}{\mathcal{W}}
\newcommand{\cC}{\mathcal{C}}
\newcommand{\cX}{\mathcal{X}}
\newcommand{\cH}{\mathcal{H}}
\newcommand{\cL}{\mathcal{L}}
\newcommand{\cF}{\mathcal{F}}
\newcommand{\cZ}{\mathcal{Z}}
\newcommand{\CC}{\mathbb{C}}
\newcommand{\ZZ}{\mathbb{Z}}
\newcommand{\PP}{\mathbb{P}}
\newcommand{\QQ}{\mathbb{Q}}
\newcommand{\HH}{\mathbb{H}}
\newcommand{\LL}{\mathbb{L}}
\newcommand{\NN}{\mathbb{N}}
\newcommand{\sC}{\mathscr{C}}
\newcommand{\sH}{\mathscr{H}}
\newcommand{\bc}{\mathbf{c}}
\newcommand{\bt}{\mathbf{t}}
\newcommand{\bu}{\mathbf{u}}
\newcommand{\jw}{\mathfrak{j}}
\newcommand{\im}{\mathsf{i}}
\DeclareMathOperator{\ch}{ch}
\DeclareMathOperator{\id}{id}
\DeclareMathOperator{\Aut}{Aut}
\DeclareMathOperator{\SL}{SL}
\DeclareMathOperator{\cFix}{Fix}
\DeclareMathOperator{\mult}{mult}
\newcommand{\br}[1]{\left\langle#1\right\rangle}  
\newcommand{\set}[1]{\left\{#1\right\}}  
\newcommand{\bv}[1]{\mathbf{#1}}
\newcommand{\M}[2]{ { \overline{\mathcal M}_{#1, #2} } }
\newcommand{\res}{ {\mathrm{res}} }
\title{Givental--type reconstruction at a non--semisimple point}
\begin{document}

\begin{abstract}
In this paper we consider the orbifold curve, which is a quotient of an elliptic curve $\mathcal E$ by a cyclic group of order 4.
We develop a systematic way to obtain a Givental--type reconstruction of Gromov--Witten theory of the orbifold curve via the product of the Gromov--Witten theories of a point. This is done by employing mirror symmetry and certain results in FJRW theory. 
In particular, we present the particular Givental’s action giving the CY/LG correspondence between the Gromov--Witten theory of the orbifold curve $\mathcal{E} / \ZZ_4$ and FJRW theory of the pair defined by the polynomial $x^4+y^4+z^2$ and the maximal group of diagonal symmetries. The methods we have developed can easily be applied to other finite quotients of an elliptic curve.
Using Givental's action we also recover this FJRW theory via the product of the Gromov--Witten theories of a point. 
Combined with the CY/LG action we get a result in “pure” Gromov--Witten theory with the help of modern mirror symmetry conjectures.
\end{abstract}

\author{Alexey Basalaev}
\address{Universit\"at Mannheim, Lehrsthul f\"ur Mathematik VI, Seminargeb\"aude A 5, 6, 68131 Mannheim, Germany}
\email{basalaev@uni-mannheim.de}
\author{Nathan Priddis}
\address{Leibniz Universit\"at Hannover, Welfengarten 1, 30167 Hannover, Germany}
\email{priddis@math.uni-hannover.de}

\maketitle

\setcounter{tocdepth}{1}
\tableofcontents

\section{Introduction}\label{sec:intro}
Let $\M{g}{n}$ stand for the Deligne--Mumford moduli space of stable curves and $V$ be a finite--dimensional complex vector space with a pairing $\eta$. A cohomological field theory (CohFT for brevity) $\Lambda_{g,n}$ on $(V,\eta)$ is a system of linear maps
\[
\Lambda_{g,n}: V^{\otimes n} \rightarrow H^*(\M{g}{n}),
\]
subject to the certain system of axioms, for all $g,n$ where $\M{g}{n}$ exists and is non--empty.

The study of CohFTs was initiated by physicists, who distinguished some particular classes of CohFT that play an important role in mirror symmetry. These are the Saito--Givental CohFT of an isolated singularity $\tilde W$ (giving the B model) and Gromov--Witten CohFT of a Calabi--Yau variety $X$ (giving the A model). Another type of A model CohFT, conjectured in physics and later constructed by mathematicians, is now known under the name of FJRW CohFT, associated to a pair $(W, G_{max})$, where $W$ is a polynomial defining an isolated singularity and $G_{max}$ is a symmetry group of $W$.

In this paper we address two different questions about the CohFTs.

\subsection*{Classification}
Of particular interest to mathematicians was the classification of all CohFTs, understood axiomatically in a general context (see \cite{KM}). From this point of view, the CohFTs mentioned above are some very special points in the space of all CohFTs. In this general context it's convenient to work with a CohFT $\Lambda_{g,n}$ in terms of a \textit{partition function} $\cZ^\Lambda := \exp(\sum_{g \ge 0} \hbar^{g-1} \cF_g)$, where $\cF_g$ is a genus $g$ potential of the CohFT --- the generating function of the integrals of $\Lambda_{g,n}$ over $\M{g}{n}$.

An important tool when working with the CohFTs is the group action of Givental, acting on the space of all partition functions of CohFTs (cf. \cite{G}). 
Givental's action is applied to the classificational problem as follows. For an arbitrary CohFT on $(V,\eta)$, with the partition function $\cZ$ one tries to find the Givental's group element $R$, such that
\[
\cZ = \hat S \cdot \hat R \cdot \cZ^{basic},
\]
where $\hat R$ denotes the action of $R$, $\hat S$ stands for the change of the variables, $\cZ^{basic}$ is the partition function of some ``basic'' CohFT. If we have such a formula, we say that $\cZ$ is \textit{reconstructed} from $\cZ^{basic}$ via the actions of $R$ and $S$.

The canonical choice of the ``basic'' CohFT is given by the product of $\dim(V)$ Gromov--Witten theories of a point. In this case the Givental's group element above is called an \textit{$R$-matrix of the CohFT}. It was conjectured by Givental and later proved by Teleman \cite{T}, that if CohFT $\Lambda_{g,n}$ is \textit{semisimple}, there is always an $R$--matrix, reconstructing $\cZ$ from $\cZ^{basic}$.

\subsection*{Calabi--Yau/Landau--Ginzburg correspondence} 
Closely related to mirror symmetry is the phenomenon called the Calabi--Yau/Landau--Ginzburg correspondence (CY/LG for brevity). Here Givental's action also has an application.
The CY/LG correspondence is a conjecture which in this context states that for two different A model partition functions $\cZ^{GW}$ and $\cZ^{FJRW}$, being mirrors to the same B model, there is a Givental's group element $R$ and a change of the variables $\hat S$, such that
\[
  \cZ^{FJRW} = \hat S \cdot \hat R \cdot \cZ^{GW}.
\]
\\
\noindent{\bf In this paper}
we address both problems outlined above. First we show the CY/LG correpondence for one particular pair $(x^4+y^4+z^2, G_{max})$ and a particular CY orbifold $\PP^1_{4,4,2}$ by giving the elements $R$ and $S$ explicitly. Second, we give a Givental--type formula, expressing the partition function of FJRW theory of $(x^4+y^4+z^2, G_{max})$ via the partition function of the so-called ``untwisted theory''. This step is connected to the work of \cite{ChR,PrSh,ChZ}.
The partition function of the ``untwisted theory'' differs from the partition function of the product of the Gromov--Witten theories of a point just by a linear change of the variables. Due to this fact we consider it as a ``basic'' CohFT in the sense as above.

Combining these two results we get the formula, reconstructing the genus zero potential of the Gromov--Witten theory of the orbifold $\PP^1_{4,4,2}$ from the basic $\cZ^{un}$:
\[
\cF_0^{\PP^1_{4,4,2}} = \lim_{\lambda\to 0}\res_{\hbar} \ln \left( \hat S^{-1} \cdot  \ \hat R_{GW} \cdot \cZ^{un} \right),
\]
where the limit on the RHS is the so-called \textit{non-equivariant limit}. 

In this way we get the result in Gromov--Witten theory by using mirror symmetry and modern approach to singularity theory, namely FJRW theory.

The Gromov--Witten theory of $\PP^1_{4,4,2}$ is not semisimple, which fact makes the Givental--Teleman technique not applicable. Our result shows that there could be still some reconstruction in a non--semisimple case too, but from another basic CohFT. While $\cZ^{basic}$ would be the product of $9$ functions $\cZ^{(pt)}$ using Givental's methods, the partition function $\cZ^{un}$ is composed of $32$ functions $\cZ^{(pt)}$, which means more variables than the partition functions $\cF_0^{\PP_{4,4,2}}$. A similar result was obtained in \cite{B0}, where it was shown that the Frobenius manifold of the Gromov--Witten theory of the orbifold $\PP^1_{2,2,2,2}$ is a \textit{submanifold} of a certain higher-dimensional Frobenius manifold.

It's also important to note that the $R$--matrix of Givental--Teleman theory is very hard to write explicitly, making the use of the full theory very restricitve. In contrast to this, our $R_{GW}$ is written in a closed formula.

The proof of the CY/LG correspondence is also interesting by itself, since it uses the theory of modular forms, but gives a result in terms of Givental's action. This part of the current article is closely related to the independent work of Shen and Zhou \cite{SZ2}. Their result is more systematic from the point of view of the theory of modular forms, however they don't give the particular Givental's action. Furthermore, when requiring some explitic data to be compared Shen--Zhou consider the solutions to a certain ODE fixed by the initial conditions, while we use particular values of the modular forms. This difference is also related to the different approaches to the primitive form change on the B side. Our approach also shows the holomorphicity of the FJRW theory in question.

\subsection*{Acknowledgement}
The authors are grateful to Amanda Francis for the fruitful discussions during her visit to Hannover and via subsequent emails. A.B. is also grateful to Atsushi Takahashi for sharing his ideas on CY/LG correspondence.

After having proved the CY/LG correspondence in the particular case of $(\tilde E_7,G_{max})$ by our methods, we were informed that Shen and Zhou had independently developed a systematic proof for the CY/LG correspondence for all simple elliptic singularities via the use of modular forms (\cite{SZ2}). 
We are grateful to Shen and Zhou for the email conversations and also for the sharing the draft versions of our respective texts between us.

A.B. was partially supported by the DGF grant He2287/4--1 (SISYPH).

\subsection*{Organization of the paper}
In Section~\ref{section:definitionOfFJRW} we review FJRW theory for a pair $(W,G)$ and give a system of axioms by which one can compute the basic correlators. We don't give a full definition of the virtual class of Fan--Jarvis--Ruan, but rather restrict ourselves to the situation of this paper in order to avoid some  complicated formulas unnecessary to this work. In Section~\ref{section:CohFT} we recall the definition of a cohomological field theory and give many details on the particular cases of the FJRW theory of $(\tilde E_7,G_{max})$ and Gromov--Witten theory of $\PP^1_{4,4,2}$. In Section~\ref{section: CYLG via modularity} we show the CY/LG correspondence by using modularity property of the Gromov--Witten theory of $\PP^1_{4,4,2}$---this is exactly the place, where we have an intersection with \cite{SZ2}. Section~\ref{section: GiventalsAction} is devoted to the Givental's action. We give there a particular action, which yields the CY/LG correspondence of the A models discussed. In Section~\ref{section: twisted theory} so-called ``twisted'' correlators are introduced. These give us a partition function, depending on additional parameters, which generalize genus zero FJRW theory. In fact, we recover the FJRW partition function in the limit. In Section~\ref{section: Rmatrix}, we show that the twisted correlators also recover a basic theory, as discussed above, which we call the ``untwisted'' theory. We show how to recover FJRW theory from the untwisted theory using Givental's action, and use this result to give an action similar to the $R$-matrix of the Gromov--Witten theory of $\PP_{4,4,2}$. In Appendix~\ref{section: appendix}, we have given a closed formula for $F_0^{\PP_{4,4,2}}$.

\section{Definition of FJRW theory}\label{section:definitionOfFJRW}

We first introduce FJRW theory in some generality, describing the state space and the moduli space of $W$-structures together with its virtual class. 

\subsection{State Space}
The Landau--Ginzburg A model is provided by FJRW theory. The input is a pair $(W,G)$ of a quasihomogeneous polynomial and a group, which we now describe. 

Let $W\in \CC[x_1,\dots,x_N]$ be a quasihomogeneous polynomial of degree $d$ with integer weights $w_1, \dots, w_N$ such that $\gcd(w_1, \dots, w_N)=1$. For each $1\leq k\leq N$, let $q_k=\tfrac{w_k}{d}$. The central charge of $W$ is defined to be
\[
\hat c:=\sum_{k=1}^N(1-2q_k).
\]
A polynomial is \emph{nondegenerate} if 
\begin{itemize}
\item[(i)] the weights $q_k$ are uniquely determined by $W$, and
\item[(ii)] the hypersurface defined by $W$ is non-singular in projective space. 
\end{itemize}

The maximal group of diagonal symmetries is defined as
\[
G_{max}:=\set{(\varTheta_1, \dots, \varTheta_N)\subseteq \left(\mathbb{Q}/\ZZ\right)^N \mid W(e^{2\pi \im \varTheta_1}x_1,\dots,e^{2\pi \im \varTheta_N}x_N) = W(x_1,\dots,x_N)}
\]
Note that $G_{max}$ always contains the \emph{exponential grading element} $\jw:=(q_1,\dots,q_N)$. If $W$ is nondegenerate, $G_{max}$ is finite. 

\begin{remark}
One can define FJRW theory more generally for \emph{admissible} subgroups $G\subset G_{max}$ (see \cite{FJR}), but in the current work we consider only $G=G_{max}$. From now on, we will denote $G_{max}$ simply by $G$. 
\end{remark}

FJRW theory defines a state space and a moduli space of $W$-curves, from which one obtains certains numbers---called correlators---as integrals over the moduli space. Let us first fix some notation. For $h\in G$, let $\cFix(h)$ denote the fixed locus of $\CC^N$ with respect to $h$, let $N_h$ denote the dimension of $\cFix(h)$ and let $W_h$ denote $W|_{\cFix(h)}$. 
Let $W_h^{+\infty}:=(\mathrm{Re} W_h)^{-1}(\rho, \infty)$, for $\rho \gg 0$, be the so-called Milnor fiber of $W_h$.

Define
\begin{equation}\label{e:Hh}
\sH_{h}:=H^{N_h}(\cFix(h), W_h^{+\infty}; \CC)^{G},
\end{equation}
that is, $G$-invariant elements of the the middle dimensional relative cohomology of $\cFix(h)$. The state space is the direct sum of the ``sectors'' $\sH_h$, i.e. 
\[
\sH_{W,G} := \bigoplus_{h\in G}\sH_h.
\]

Let $G^{nar}=\set{h\in G\mid N_h=0}$. These summands $\sH_h$ for $h\in G^{nar}$ are the so-called \emph{narrow} sectors. 

$\sH_{W,G}$ is $\mathbb{Q}$-graded by the $W$-degree. To define this grading, first note that each element $h\in G$ can be uniquely expressed as a tuple of rational numbers
\[
h=(\Theta_1^h,\dots,\varTheta_N^h)
\]
with $0\leq \varTheta_k^h < 1$.

We first define the \emph{degree-shifting number} 
\[
\iota(h):=\sum_{k=1}^N(\varTheta_k^h-q_k).
\]
For $\alpha_h \in \mathscr{H}_{h}$, the (real) $W$-degree of $\alpha_h$ is defined by
\begin{equation}\label{e:degw}
\deg_W(\alpha_h):=N_h+2\iota(h).
\end{equation}

The sector indexed by $\jw$, is one--dimensional, and has $W$--degree 0. 
This sector is unique with this property. 

Because $\cFix(h)= \cFix(h^{-1})$ there is a nondegenerate pairing
\[
\br{-,-}:\sH_h\times\sH_{h^{-1}}\to \CC,
\]
the residue pairing of $W_h$, which induces a symmetric nondegenerate pairing
\[
\br{- ,- }:\sH_{W,G}\times \sH_{W,G}\to \CC.
\]


\subsection{Moduli of W-curves}

Recall that an \emph{$n$-pointed orbifold curve} is a stack of Deligne--Mumford type with at worst nodal singularities with orbifold structure only at the marked points and the nodes. We require the nodes to be \emph{balanced}, in the sense that the action of the generator of the stabilizer group $\ZZ_k$ be given by 
\[
(x,y)\mapsto (e^{2 \pi \im/ k} x,e^{-2 \pi \im/k}  y).
\]
Given such a curve $\cC$, let $\omega$ be its dualizing sheaf. The \emph{log-canonical bundle} is 
\[
\omega_{\log}:=\omega(p_1+ \dots + p_n)
\]

In what follows, we will assume $d$, the degree of $W$, is also the exponent of $G_{max}$, i.e. for each $h\in G_{max}$, $h^d=\id$. This is not the case in general, but it simplifies the exposition, while still giving a general enough picture. 

The FJRW correlators were first defined in \cite{FJR}, but we will follow a slightly different treatment as given in \cite{ChR}, where it is also shown that the two definitions agree. The reason for our choice, is that \cite{ChR} allows us to use Givental's formalism to determine the FJRW correlators. 

A $d$-stable curve is a proper connected orbifold curve $\cC$ of genus $g$ with $n$ distinct smooth markings $p_1,\dots,p_n$ such that
\begin{enumerate}
\item[(i)] the  $n$-pointed underlying coarse curve is stable, and
\item[(ii)] all the stabilizers at nodes and markings have order $d$. 
\end{enumerate}

The moduli stack $\overline{\cM}_{g,n,d}$ parametrizing such curves is proper, smooth and has dimension $3g-3+n$. It differs from the moduli space of curves only because of the stabilizers over the normal crossings (see \cite{ChR}).  

We can write $W$ as a sum of monomials $W=W_1+\dots+W_s$, where $\displaystyle W_i=c_i\prod_{k=1}^N x_k^{a_{ik}}$ with $a_{ik}\in \NN$ and $c_i\in \CC$. Given line bundles $\cL_1, \ldots , \cL_N$ on the $d$-stable curve $\cC$, we define the line bundle \[W_i(\cL_1,\dots,\cL_N) := \bigotimes_{k=1}^N\cL_k^{\otimes a_{ik}}.\] 

\begin{definition}A \emph{$W$-structure} is comprised of the data 
\[
(\cC, p_1,\dots, p_n, \cL_1,\dots,\cL_N,\varphi_1,\dots \varphi_N),
\]
where $\cC$ is an $n$-pointed $d$-stable curve, the $\cL_k$ are line bundles on $\cC$ satisfying 
\[
W_i(\cL_1,\dots,\cL_N)\cong \omega_{\log},
\] 
and for each $k$, $\varphi_k:\cL_k^{\otimes d}\to \omega_{\log}^{w_k}$ is an isomorphism of line bundles. 
\end{definition}
There exists a moduli stack of $W$-structures, denoted by $\cW_{g,n}$ (see \cite{FJR2}, \cite{ChR} for the construction). \footnote{This defninition differs slightly from \cite{FJR2}, in that only the isomorphisms $\phi_k$ are part of the data. But it is shown in \cite{ChR} that the defintions agree.}

\begin{proposition}[\cite{ChR}]
The stack $\cW_{g,n}$ is nonempty if and only if $n>0$ or $2g-2$ is a positive multiple of $d$. It is a proper, smooth Deligne--Mumford stack of dimension $3g-3+n$. It is etale over $\overline{\cM}_{g,n,d}$ of degree $|G_{max}|^{2g-1+n}/d^N$. 
\end{proposition}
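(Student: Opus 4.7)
The plan is to study the forgetful morphism $\pi\colon W_{g,n}\to \overline{\cM}_{g,n,d}$ that remembers only the underlying $d$--stable curve, and to show that $\pi$ is finite étale of the claimed degree over the locus where its fiber is non--empty. Since $\overline{\cM}_{g,n,d}$ is already known to be a proper smooth Deligne--Mumford stack of dimension $3g-3+n$, all claimed properties of $W_{g,n}$ --- smoothness, the dimension formula, and properness --- then descend from the corresponding properties of $\overline{\cM}_{g,n,d}$ along a finite étale surjection. For the nonemptiness criterion, I would fix a $d$--stable curve $\cC$ and analyze when line bundles $\cL_1,\dots,\cL_N$ with $\cL_k^{\otimes d}\cong \omega_{\log}^{w_k}$ exist. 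The coarse degree of $\omega_{\log}$ is $2g-2+n$; when $n>0$, each marked point carries a $\ZZ_d$--stabilizer and the orbifold flexibility there produces $d$--th roots of $\omega_{\log}^{w_k}$ for arbitrary weights $w_k$. When $n=0$ the curve is representable, so a $d$--th root of $\omega$ exists precisely when $d\mid (2g-2)$; combined with the stability condition $2g-2>0$, this recovers the stated criterion. One then verifies that the monomial constraints $W_i(\cL_1,\dots,\cL_N)\cong \omega_{\log}$ can be met simultaneously: both sides become $\omega_{\log}^d$ after raising to the $d$--th power by quasi--homogeneity of $W$, so the obstructions live in finite torsion subgroups of the orbifold Picard group and are removed by twisting the $\cL_k$ by appropriate torsion line bundles.

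Étaleness of $\pi$ over $\overline{\cM}_{g,n,d}$ is then a direct deformation--theoretic computation: once the curve is fixed, the isomorphisms $\varphi_k$ rigidify each $\cL_k$, so the relative tangent space and the relative obstruction space both vanish. Smoothness of $W_{g,n}$ and the dimension formula $3g-3+n$ are inherited from $\overline{\cM}_{g,n,d}$, and properness reduces to the statement that $\pi$ is finite, which follows from the degree computation below.

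For the degree, I would work over a generic smooth $d$--stable curve $\cC$ and parametrize isomorphism classes of $W$--structures as a torsor under a finite subgroup of $\operatorname{Pic}(\cC)[d]^N$ cut out by the polynomial relations. The $d$--torsion of the orbifold Picard group of $\cC$ combines the contribution $d^{2g}$ coming from the Jacobian of the coarse curve with contributions from the $n$ stacky markings, constrained by a global degree compatibility. The subgroup of $N$--tuples respecting every monomial equation $W_i(\cL_1,\dots,\cL_N)\cong \omega_{\log}$ is canonically identified with an appropriate power of the diagonal symmetry group $G_{max}$, yielding $|G_{max}|^{2g-1+n}$ in the numerator; the denominator $d^N$ records the finite group $(\mu_d)^N$ of rescalings acting trivially on the data $(\varphi_1,\dots,\varphi_N)$ via the automorphism structure of the stack. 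I expect this last step --- pinning down the Picard--theoretic subgroup encoding the $W$--constraints as $G_{max}$ and reconciling the stack--theoretic degree with the naive point count --- to be the main technical obstacle of the proof.
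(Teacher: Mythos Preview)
The paper does not give its own proof of this proposition: it is quoted verbatim as a result of Chiodo--Ruan \cite{ChR} and no argument is supplied. So there is no in--paper proof to compare against; the relevant comparison is with the cited source. Your outline is precisely the strategy used there (and in Chiodo's earlier work on $r$--spin structures): reduce everything to showing that the forgetful map $W_{g,n}\to\overline{\cM}_{g,n,d}$ is finite \'etale, get non--emptiness from divisibility of $\deg\omega_{\log}$, get \'etaleness from the rigidity imposed by the $\varphi_k$, and compute the degree by counting $d$--torsion in the orbifold Picard group subject to the monomial constraints, with the $d^N$ accounting for the $\mu_d$--automorphisms of each root. The step you flag as the main technical point --- identifying the subgroup of $\operatorname{Pic}(\cC)[d]^N$ cut out by the relations $W_i(\cL_\bullet)\cong\omega_{\log}$ with copies of $G_{max}$ --- is indeed where the work lies in \cite{ChR}, and your description of it is accurate.
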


Let $\bv{h}=(h_1,\dots,h_n)$, with $h_i=(\varTheta_1^i, \dots, \varTheta_N^i)$. Define $\cW_{g,n}(\bv{h})$ to be the stack of $n$-pointed, genus $g$ $W$-curves for which the generator of the isotropy group at $p_j$ acts on $\cL_k$ by multiplication by $e^{2 \pi \im \ \varTheta_k^j}$. We can write $\varTheta_k^j=m_k^j/d$ for some integer $0\leq m_k^j <d$, which we call the multiplicity of  $\cL_k$ at $p_j$ and we denote by $\mult_{p_i}\cL_k$. The following proposition describes a decomposition of $\cW_{g,n}$ in terms of multiplicities:
\begin{proposition}[\cite{ChR,FJR}]\label{p:linebundledegree}
The stack $\cW_{g,n}$ can be expressed as the disjoint union
\[
\cW_{g,n}=\coprod \cW_{g,n}(\bv{h})
\]
with each $\cW_{g,n}(\bv{h})$ an open and closed substack of $\cW_{g,n}$. Furthermore, $\cW_{g,n}(\bv{h})$ is non-empty if and only if
\begin{align*}
h_i &\in G_{max}, \: i=1,\dots, n\\
q_k(2g-2+n)-\sum_{i=1}^n \varTheta_k^{i} &\in \ZZ, \: \: \:\: \: \: \: k=1,\dots, N.
\end{align*}
\end{proposition}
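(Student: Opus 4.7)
The plan is to split the statement into two parts: (a) the decomposition of $W_{g,n}$ into open and closed substacks indexed by the tuple $\bv h$, and (b) the numerical criterion for non--emptiness of each stratum.

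For (a), I would observe that at each marked point $p_j$ of a $W$--curve, the cyclic stabilizer $\ZZ_d$ acts on the fiber $(\cL_k)_{p_j}$ through a character $\zeta\mapsto \zeta^{m_k^j}$ with $0\le m_k^j<d$. In any flat family of $W$--curves such a character is a discrete invariant and hence locally constant. Setting $\varTheta_k^j:=m_k^j/d$ and $h_j:=(\varTheta_1^j,\ldots,\varTheta_N^j)$, the assignment $\bv h=(h_1,\ldots,h_n)$ is thus locally constant on $W_{g,n}$, so each locus $W_{g,n}(\bv h)$ is simultaneously open and closed, and $W_{g,n}$ is the disjoint union of these substacks as $\bv h$ ranges over all possible tuples.

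For the necessary direction of (b), I would first note that $\omega_{\log}=\omega(p_1+\cdots+p_n)$ carries the trivial stabilizer character at each marked point. Thus every monomial isomorphism $W_i(\cL_1,\ldots,\cL_N)\cong \omega_{\log}$ forces $\sum_k a_{ik}\varTheta_k^j\in\ZZ$, which is precisely the condition $W(e^{2\pi\im\varTheta_1^j}x_1,\ldots,e^{2\pi\im\varTheta_N^j}x_N)=W$, i.e.\ $h_j\in G_{max}$. For the degree constraint, taking degrees of $\cL_k^{\otimes d}\cong\omega_{\log}^{w_k}$ on the orbifold curve yields $d\cdot\deg(\cL_k)=w_k(2g-2+n)$. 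Using the standard push--forward formula $\deg(|\cL_k|)=\deg(\cL_k)-\sum_j \varTheta_k^j$, where $|\cL_k|$ is the coarse line bundle, one rearranges to $\deg(|\cL_k|)=q_k(2g-2+n)-\sum_j \varTheta_k^j$, which must therefore be an integer.

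Conversely, assuming both numerical conditions, I would construct a $W$--structure with monodromy $\bv h$ on any $d$--stable curve $\cC$: choose coarse line bundles $|\cL_k|$ of integer degree $q_k(2g-2+n)-\sum_j\varTheta_k^j$, pull back and twist by the fractional divisor $\sum_j \varTheta_k^j[p_j]$ to obtain $\cL_k$ with the prescribed local characters and total degree $q_k(2g-2+n)$. The condition $h_j\in G_{max}$ guarantees that $W_i(\cL_1,\ldots,\cL_N)$ and $\omega_{\log}$ share the same local characters at every marked point and the same total degree, so they differ by an element of the connected Picard variety, which can be absorbed by a torsion twist of the $\cL_k$. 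The main obstacle---and the heart of the Chiodo--Ruan construction---is to perform these torsion twists simultaneously for all $N$ bundles in a way compatible both with the $d$--th root isomorphisms $\varphi_k$ and with every monomial isomorphism of $W$; this coherence statement reduces to a surjectivity/dimension check on the map from $W_{g,n}(\bv h)$ to the universal Picard stack over $\overline{\cM}_{g,n,d}$, which is routine once the smoothness and the etale degree of the forgetful map recorded in the preceding proposition are in hand.
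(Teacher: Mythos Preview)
The paper does not actually prove this proposition: it is stated with attribution to \cite{ChR,FJR}, and the only argument given is the single sentence following the statement, namely that ``the second condition comes from the pushforward of $\cL_k$ to the coarse underlying curve, which must have integer degree.'' Your necessary direction in (b) reproduces exactly this remark, and your parts (a) and the $G_{max}$--condition are the standard arguments one would supply to flesh it out. So on the part the paper does address, your approach agrees.

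Your sufficiency sketch goes well beyond anything in the paper, and it is here that your write--up is weakest. The last paragraph hand--waves the key point: once you have chosen the $\cL_k$ with the right local characters and degree, you still need to arrange \emph{simultaneously} that each $\cL_k^{\otimes d}\cong\omega_{\log}^{w_k}$ (not merely that the degrees and characters match) and that every monomial $W_i(\cL_1,\ldots,\cL_N)\cong\omega_{\log}$. Saying these ``differ by an element of the connected Picard variety, which can be absorbed by a torsion twist'' is not quite right---a torsion twist of the $\cL_k$ lands you in a discrete torsor, not the connected Picard---and the appeal to ``surjectivity/dimension check'' via the \'etale degree in the preceding proposition is circular, since that degree count is itself equivalent to the non--emptiness and fiber cardinality you are trying to establish. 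The honest argument (as in \cite{ChR}) is to first show non--emptiness of the $d$--th root stack for a single line bundle under the integrality condition, and then observe that for a nondegenerate $W$ the map $(\cL_1,\ldots,\cL_N)\mapsto$ (monodromies, $d$--th powers) has the right fiber structure because the exponent matrix $(a_{ik})$ has full rank over $\ZZ/d\ZZ$. If you want to keep your sketch, replace the vague Picard/torsion sentence with this rank argument.
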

The second condition comes from the pushforward of $\cL_k$ to the course underlying curve, which must have integer degree. We denote the universal curve by $\pi:\sC\to \cW_{g,n,G}(\bv{h})$ and the universal $W$-structure by $(\LL_1,\dots, \LL_N)$.

\subsection{Axioms of FJRW theory}\label{ss:FJRWaxioms}
For each substack $\cW_{g,n}(\bv{h})$, one may define a virtual cycle (see \cite{FJR}, \cite{FJR2}) 
\[
[\cW_{g,n}(\bv{h})]^{vir}\in H_*(\cW_{g,n}(\bv{h}), \QQ)\otimes \prod_{i=1}^n H_{N_{h_i}}(\cFix(h_i), W_{h_i}^{+\infty}; \CC)^{G}  
\]
which satisfies the following axioms: 
\begin{fjraxiom}[Degree]\label{a:fjr1} The virtal cycle has degree 
\[
2\left((\hat c -3)(1-g)+ n -\sum_{i=1}^n \iota(h_i)\right).
\]
In particular, if this number is not an integer, then $[\cW_{g,n}(\bv{h})]^{vir}=0$.
\end{fjraxiom}

\begin{fjraxiom}[Line bundle degree]\label{a:line bundle}
The degree of the pushforward $|\cL_k|$
\[
q_k(2g-2+n)-\sum_{i=1}^n \varTheta_k^{h_i}
\]
must be an integer (as in Proposition \ref{p:linebundledegree}), otherwise $[\cW_{g,n}(\bv{h})]^{vir}=0$.
\end{fjraxiom}

\begin{fjraxiom}[Symmetric Group invariance] For any $\sigma \in S_n$, we have 
\[[\cW_{g,n}(h_1,\dots,h_n)]^{vir}=[\cW_{g,n}(h_{\sigma(1)}, \dots ,h_{\sigma(n)})]^{vir}.\]
\end{fjraxiom}

\begin{fjraxiom}[Deformation invariance] Let $W_t\in \CC[x_1,\dots,x_N]$ be a family of nondegenerate quasihomogeneous polynomials depending smoothly on a real parameter $t\in [a,b]$. Suppose that $G$ is the common isomorphism group of $W_t$. The corresponding moduli of $W$-structures are naturally isomorphic, and the virtual cycle $[\cW_{g,n}(h_1,\dots,h_n)]^{vir}$ associated to $(W_t,G)$ is independent of $t$. 
\end{fjraxiom}

\begin{fjraxiom}[$G_{max}$-invariance] There is a natural $G_{max}$ action on $H_*(\cW_{g,n}(\bv{h}), \QQ)$ and $H_{N_{h_i}}(\cFix(h_i), W_{h_i}^{+\infty}; \CC)^{G}$. The virtual cycle $[\cW_{g,n}(\bv{h})]^{vir}$ is invariant under the induced $G_{max}$ action on the tensor product.
\end{fjraxiom}

\begin{fjraxiom}[Concavity]\label{a:fjrlast}
Suppose that $h_i\in G^{nar}$ for all $i$. If $\pi_*\left(\bigoplus_{k=1}^N \LL_k \right)=0$, then the virtual class is given by
\[
[\cW_{g,n}(\bv{h})]^{vir}= c_{top}\left(\big(R^1\pi_*\bigoplus_{k=1}^N \LL_k \big)^\vee\right)\cap [\cW_{g,n}(\bv{h})]
\]
and the substack $\cW_{g,n}(\bv{h})$ is called \textit{concave}.
\end{fjraxiom}

\begin{remark}
This last axiom can also be modified to take into account the restriction to boundary components on $\cW_{g,n}$, i.e. $W$-curves with reducible underlying curve (cf. \cite{FJR}). 
\end{remark}

\begin{remark}
Some authors also include the ``Index Zero`` axiom, but in full generality, both concavity and index zero are actually a part of a larger axiom involving the topological Euler class and the Witten map, but we will not need the full statement here. We also do not include the sums of singularities axiom.
\end{remark}

There are a few other axioms that are satisfied by $[\cW_{g,n}(\bv{h})]^{vir}$ that are more complicated to state (cf. \cite{FJR}), so we will not list them here. They show, for example, that the virtual class behaves well with respect to cutting along nodes, ensuring that FJRW theory defines a cohomological field theory, as we will see. 

The stacks $\cW_{g,n}$ are also equipped with $\psi$-classes, which are pulled back from the course underlying curve.

\section{Cohomological field theories on $\M{g}{n}$}\label{section:CohFT}

We briefly recall some basic facts about cohomological field theories as introduced in \cite{KM}.
  
\subsection{Cohomological Field Theory axioms}\label{ss:cohft axioms}
Let $(V,\eta)$ be a finite-dimensional vector space with a nondegenerate pairing.
Consider a system of linear maps 
\[
\Lambda_{g,n}: V^{\otimes n} \rightarrow H^*(\M{g}{n}),
\] 
defined for all $g,n$ such that $\M{g}{n}$ exists and is non-empty.
The set $\Lambda_{g,n}$ is called a \emph{cohomological field theory on $(V,\eta)$}, or CohFT, if it satisfies the following axioms:
    
\begin{cohftaxiom}[$S_n$ invariance] $\Lambda_{g,n}$ is equivariant with respect to the $S_n$-action permuting the factors in the tensor product and the numbering of marked points in $\M{g}{n}$.
\end{cohftaxiom}

\begin{cohftaxiom}[Cutting trees] For the gluing morphism $\rho: \M{g_1}{n_1+1} \times \M{g_2}{n_2+1} \rightarrow \M{g_1+g_2}{n_1+n_2}$ we have:
\[
\rho^* \Lambda_{g_1+g_2,n_1+n_2} = (\Lambda_{g_1, n_1+1} \cdot \Lambda_{g_2,n_2+1}, \eta^{-1}),
\]
where we contract with $\eta^{-1}$ the factors of $V$ that correspond to the node in the preimage of $\rho$.
\end{cohftaxiom}
      
\begin{cohftaxiom}[Cutting loops] For the gluing morphism $\sigma: \M{g}{n+2} \rightarrow \M{g+1}{n}$ we have:
\[
\sigma^* \Lambda_{g+1,n} = (\Lambda_{g, n+2}, \eta^{-1}),
\]
where we contract with $\eta^{-1}$ the factors of $V$ that correspond to the node in the preimage of $\sigma$.
\end{cohftaxiom}
    
In this paper we further assume that the CohFT $\Lambda_{g,n}$ is \emph{unital}---i.e. there is a fixed vector $\textbf 1 \in V$ called the \emph{unit} such that the following axioms are satisfied.
  
\begin{unitaxiom} For every $\alpha_1,\alpha_2 \in V$ we have: $\eta(\alpha_1, \alpha_2) = \Lambda_{0,3}(\textbf{1} \otimes \alpha_1 \otimes \alpha_2)$.
\end{unitaxiom}

\begin{unitaxiom} Let $\pi: \M{g}{n+1} \rightarrow \M{g}{n}$ be the map forgetting the last marking, then:
\[
\pi^* \Lambda_{g,n}(\alpha_1 \otimes \dots \otimes \alpha_n) = \Lambda_{g,n+1}(\alpha_1 \otimes \dots \otimes \alpha_n \otimes \textbf{1}).
\]
\end{unitaxiom}

Another important property of CohFTs is the notion of quasihomogeneity. A CohFT $\Lambda_{g,n}$ on $(V,\eta)$ is called \emph{quasihomogeneous} if the vector space $V$ is graded by $\deg: V \to \QQ$ and there is a number $\hat c$, such that for any ${\alpha_1, \dots, \alpha_n \in V}$
\[
\sum_{i=1}^n \deg(\alpha_i) = \hat c +n + g - 3
\]
whenever $\langle \alpha_1, \dots, \alpha_n \rangle_{g,n} \neq 0$. The number $\hat c$ is called the \emph{central charge}.

\begin{remark}
The space of all quasihomogeneous CohFTs is discrete in the space of all CohFTs. However these CohFTs posess several properties that make them easier to work with. The CohFTs we are going to work with in this text are quasihomogeneous. In FJRW theory the state space is graded by $W$-degree, and in GW theory, the state space is graded simply by the cohomological degree. 
\end{remark}

In what follows we will denote the CohFT just by $\Lambda$ rather than $\Lambda_{g,n}$ when there is no ambiguity.

Let $\psi_i \in H^*(\M{g}{n})$,  $1 \le i \le n$ be the so-called $\psi$-classes. The genus $g$, $n$-point correlators of the CohFT are the following numbers:
\[
\langle \tau_{a_1}(e_{\alpha_1}) \dots \tau_{a_n}(e_{\alpha_n}) \rangle_{g,n}^\Lambda := \int_{\M{g}{n}} \Lambda_{g,n}(e_{\alpha_1} \otimes \dots \otimes e_{\alpha_n}) \psi_1^{a_1} \dots \psi_n^{a_n}.
\]

Denote by $\cF_g$ the generating function of the genus $g$ correlators, called genus $g$ potential of the CohFT:
\[
\cF_g := \sum \frac{\langle \tau_{a_1}(e_{\alpha_1}) \dots \tau_{a_n}(e_{\alpha_n}) \rangle_{g,n}^\Lambda}{\Aut( \{ \boldsymbol \alpha, \bf a \})} \ t^{a_1,\alpha_1} \dots t^{a_n,\alpha_n}.
\]

It is useful to assemble the correlators into a generating function called partition function of the CohFT\footnote{One could consider a family of partition functions $\cZ_\tau$ for $\tau \in V$ by shifting the variables. We will explain it later in Section~\ref{section: GiventalsAction}.}:
\[
\cZ := \exp \left( \sum\nolimits_{g \ge 0} \hbar^{g-1} \mathcal F_g \right).
\]

We will also make use of the so-called \textit{primary} genus $g$ potential that is a function of the variables $t^\alpha:=t^{0,\alpha}$ defined as follows:
\[
  F_g := \cF_g \mid_{t^\alpha := t^{0,\alpha}, \ t^{\ell,\alpha} = 0, \forall \ell \ge 1}
\]
what is also sometimes called a \textit{restriction to the small phase space}.

\subsection{CohFT of FJRW theory and Gromov--Witten theory}
In the space of all cohomological field theories there are certain special theories, called also sometimes ``geometric'' since they correspond to some geometry. These include FJRW theory and GW theory. 
  
Consider the FJRW theory of a pair $(W,G)$. Its moduli space of $W$-structures has a good virtual cycle $[\cW_{g,n}(\bv{h})]^{vir}$ as it was explained in Section~\ref{ss:FJRWaxioms}.

However, we can also push forward to $\overline{\cM}_{g,n}$ via the map $s:\cW_{g,n}\to \overline{\cM}_{g,n}$. Let $\alpha_i \in \sH_{h_i}$, and $\boldsymbol{\alpha}=(\alpha_1,\dots,\alpha_n)$. We define 
\begin{equation}\label{e:fjrcohft}
\Lambda_{g,n}^{FJRW}(\boldsymbol{\alpha}) =\frac{|G|^g}{\deg s}PD s_*\Big([\cW_{g,n}(\bv{h})]^{vir}\cap \prod_{i=1}^n \alpha_i\Bigg). 
\end{equation}
Here $PD$ denotes the Poincare dual. 

\begin{theorem}[Theorem~4.2.2 in \cite{FJR}]
For any admissible pair $(W,G)$ the system of maps $\Lambda_{g,n}^{FJRW}$ defines a unital CohFT on the vector space $\sH_{W,G}$.
\end{theorem}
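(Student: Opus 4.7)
The plan is to verify the three CohFT axioms and the two unit axioms one at a time, reducing each to a property of the virtual cycle $[W_{g,n}(\bv{h})]^{vir}$ following \cite{FJR}. Most of the axioms come directly from the list in Section~\ref{ss:FJRWaxioms} together with the composition and forgetful--morphism axioms of \cite{FJR} that are alluded to in the remarks but not reproduced in the excerpt. The prefactor $|G|^g/\deg s$ is chosen precisely so that, after pushing forward along the étale map $s:\cW_{g,n}\to\M{g}{n}$, the numerical coefficients on both sides of the boundary axioms match.

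For well--definedness I note that the sector data $\bv{h}=(h_1,\dots,h_n)$ is determined by $\alpha_i\in\sH_{h_i}$; hence the cap product lives on the open and closed substack $\cW_{g,n}(\bv{h})$, and pushing forward by $s$ and Poincar\'e--dualizing gives a class in $H^*(\M{g}{n})$. The $S_n$--invariance axiom is then immediate from the Symmetric Group invariance of the virtual class together with the $S_n$--equivariance of $s$ and of the pulled--back $\psi$--classes.

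The main obstacle is the Cutting trees and Cutting loops axioms. The plan here is to use the fact that every boundary stratum of $\M{g}{n}$ lifts to a stratum in $\cW_{g,n}$ via a gluing morphism $\tilde\rho$ of $W$--curves, and that the virtual cycle satisfies a splitting
\[
\tilde\rho^*[W_{g,n}(\bv{h})]^{vir}=\bigoplus_{h\in G}[W_{g_1,n_1+1}(\bv{h}_1,h)]^{vir}\boxtimes[W_{g_2,n_2+1}(h^{-1},\bv{h}_2)]^{vir}
\]
up to combinatorial factors coming from the automorphisms over the node. Contracting against the residue pairing on $\sH_h\otimes\sH_{h^{-1}}$ produces exactly the inverse--pairing insertion $\eta^{-1}$ demanded by the axiom, while the factor $|G|^{2g-1+n}/d^N$ from the degree of $s$ (see the proposition following Proposition~\ref{p:linebundledegree}) combines with the explicit prefactor $|G|^g/\deg s$ on both sides of the equality to yield matching coefficients. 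Cutting loops is handled identically, using the analogous non--separating gluing morphism and the same residue--pairing matching across the two branches.

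For the unit axioms the candidate unit is $\phi_\jw$, the fundamental class in the sector indexed by the exponential grading element $\jw$, distinguished as the unique element of $W$--degree zero. Axiom U1 follows from a direct computation on $\cW_{0,3}(\jw,h,h^{-1})$, which is zero--dimensional; the degree axiom forces the virtual class there to reproduce the residue pairing, so $\Lambda_{0,3}^{FJRW}(\phi_\jw\otimes\alpha_1\otimes\alpha_2)=\eta(\alpha_1,\alpha_2)$. Axiom U2 follows from the pullback compatibility of the virtual cycle along the forgetful morphism of $W$--curves lifting $\M{g}{n+1}\to\M{g}{n}$: inserting $\jw$ at the new marking adjusts the line bundles only by the log--canonical twist at the new puncture, which leaves the virtual class invariant under pullback and yields the required identity of correlators.
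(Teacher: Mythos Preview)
The paper does not prove this statement at all: it is quoted verbatim as Theorem~4.2.2 of \cite{FJR}, with no argument given, and the text immediately moves on to notation for the resulting partition function. So there is no ``paper's own proof'' to compare your proposal against.

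Your sketch is a reasonable outline of how the argument in \cite{FJR} actually goes, and in that sense you have done more than the paper does. The shape is correct: $S_n$--invariance from the corresponding axiom on the virtual class, the two gluing axioms from the composition/splitting properties of $[W_{g,n}(\bv{h})]^{vir}$ that the paper alludes to after FJR~\ref{a:fjrlast} (``they show, for example, that the virtual class behaves well with respect to cutting along nodes, ensuring that FJRW theory defines a cohomological field theory''), and the unit axioms from the identification of $\phi_\jw$ and the forgetful--tails axiom. The one place where your sketch is genuinely vague is the bookkeeping of the prefactor $|G|^g/\deg s$ under gluing: you assert that the degrees combine correctly but do not carry out the count, and in \cite{FJR} this is where the actual work lies (tracking the orbifold degree of $s$ over boundary strata and the sum over decorations $h$ at the node). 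If you were writing this as a self--contained proof you would need to make that step explicit; as a summary of a cited result it is adequate.
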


In what follows we denote simply by $\cZ^{(W,G)}$ and $\cF_g^{(W,G)}$ the partition function and genus $g$ potential of the CohFT above for a fixed admissible pair $(W,G)$. As a consequence of the properties of the virtual cycle $[\cW_{g,n}]^{vir}$, these functions also satisfy certain additional properties in addition to those common to all CohFTs.

The second important class of the CohFTs is given by the Gromov--Witten theories. We recall very briefly the definition and refer to \cite{A} for a full exposition. Let $\cX$ be an orbifold and $\beta \in H_2(\cX, \ZZ)$. There is the moduli stack $\M{g}{n}(\cX, \beta)$ of degree $\beta$ stable orbifold maps from the genus $g$ curve with $n$ marked points to $\cX$. The \emph{orbifold cohomology} $H^*_{orb}(\cX)$, with the nondegenerate pairing, serves as a state space in this theory. Similarly to the FJRW theory there is a good virtual cycle $[\M{g}{n}(\cX,\beta)]^{vir}$ so that one could consider the Gromov--Witten invariants given by the intersection theory on $\M{g}{n}(\cX,\beta)$.

Again, by considering a push forward $s: \M{g}{n}(\cX,\beta) \to \M{g}{n}$ we get a CohFT associated to $\cX$ with the fixed $\beta$.
\[
\Lambda^{GW}_{g,n,\beta} := \frac{1}{\deg s} PD s_*\Big([\M{g}{n}(\cX,\beta)]^{vir}\cap \prod_{i=1}^n ev^*_i(\alpha_i)\Bigg). 
\]
  
The fact that this map defines a CohFT follows from a more general statement and could be found for example in \cite{A}.

As with FJRW theory, the CohFT obtained satisfies some additional properties. One of the most important for us is the so-called \emph{divisor equation}. When $H_2(\cX,\ZZ)$ is one-dimensional, as is the case in this work, it allows us to sum over all classes $\beta$ and obtain a CohFT, $\Lambda_{g,n}^{GW}$ depending on $\cX$ only.  We denote by $\cZ^{\cX}$ and $\cF_g^{\cX}$ the partition function and genus $g$ potential of the CohFT $\Lambda_{g,n}^{GW}$.

\subsection{Reconstruction in genus zero}\label{subsection: reconstruction in genus zero}
It is often useful to be able to express all correlators of a given CohFT from some finite list. This is usually referred to as \emph{reconstruction}.
  
Due to the topology of the space $\M{0}{n}$ the small phase space potential of a CohFT on $(V,\eta)$ satisfies the so-called \emph{WDVV equations}. For any four fixed $0 \le i,j,k,l \le \dim(V)-1$ it reads:
\[
\sum_{p,q} \frac{\partial^3 F_0}{\partial t^i \partial t^j \partial ^p} \eta^{p,q} \frac{\partial^3 F_0}{\partial t^q \partial t^k \partial^l} = \sum_{p,q} \frac{\partial^3 F_0}{\partial t^i \partial t^k \partial ^p} \eta^{p,q} \frac{\partial^3 F_0}{\partial t^q \partial t^j \partial^l}.
\]

It follows from here that the genus zero three-point correlators endow $V$ with the structure of an associative and commutative algebra by setting $e_i \circ e_j := \sum_{p,k} \langle e_i, e_j, e_p \rangle_{0,3} \cdot \eta^{p,k} \cdot e_k$.
  
\begin{definition}
The vector $\gamma \in V$ is called \emph{primitive} if there is no $\gamma_1,\gamma_2 \in V$, such that  $\gamma = \gamma_1 \circ \gamma_2$ and $0 < \deg(\gamma_1) \le \deg(\gamma_2) < \deg(\gamma)$. We call a correlator $\langle \dots \rangle_{0,n}$ \emph{basic} if it involves at most two non-primitive insertions.
\end{definition}
  
The following lemma will be used later on.
  
\begin{lemma}[Lemma~6.2.8 
in \cite{FJR}]\label{lemma: reconstruction in genus zero}
Fix a quasihomogeneous CohFT on $(V, \eta)$.

If $\deg(\alpha) \le \hat c$ for all vectors $\alpha \in V$ then all the genus zero correlators are uniquely determined by $\eta$ and the $n$-point genus zero correlators with:
\[
n \le 2 + \frac{1 + \hat c}{1 - P}, \quad P:= \max_{\substack{v \in V \\ v \text{ is primitive}}} \deg(v).
\]
\end{lemma}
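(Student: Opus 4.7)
The plan is to split the problem into two stages: (i) use the WDVV equation to rewrite every genus--zero correlator as a polynomial in \emph{basic} correlators (those with at most two non--primitive insertions) and components of $\eta^{-1}$; and (ii) use quasihomogeneity to bound the number of insertions in any nonvanishing basic correlator.

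For stage (i) I would induct on the pair (number of non--primitive insertions, total degree of non--primitive insertions), ordered lexicographically. Given a correlator $\langle \gamma, \alpha_1, \dots, \alpha_{n-1}\rangle_{0,n}$ containing a non--primitive insertion $\gamma = \gamma_1 \circ \gamma_2$ with $0 < \deg(\gamma_i) < \deg(\gamma)$, apply the WDVV equation with the four distinguished indices labeled by $\gamma_1$, $\gamma_2$ and two other insertions, then differentiate $(n-4)$ times in the remaining insertions and set all times to zero. Both sides become sums of products of genus--zero correlators paired via $\eta^{-1}$. On one side the original correlator appears with a nonzero coefficient arising from a three--point ``contraction'' factor of the form $\eta(\gamma_1 \circ \gamma_2, \gamma^\vee)$, while every other term is either a correlator of strictly smaller arity or a correlator in which $\gamma$ has been replaced by an insertion of $\gamma_1$ or $\gamma_2$, each of strictly smaller degree. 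Solving for the distinguished term thus decreases the complexity, and iterating terminates at correlators in which at most two insertions remain non--primitive.

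For stage (ii), the selection rule from quasihomogeneity reads $\sum_{i=1}^n \deg(\alpha_i) = \hat c + n - 3$ for any nonvanishing correlator. In a basic correlator, at most two insertions are non--primitive (hence of degree $\le \hat c$ by the hypothesis of the lemma), and the remaining $n-2$ are primitive, so of degree $\le P$. Therefore
\[
\hat c + n - 3 \ = \ \sum_{i=1}^n \deg(\alpha_i) \ \le \ (n-2)P + 2\hat c,
\]
which rearranges to $n(1-P) \le \hat c + 3 - 2P$, equivalently $n \le 2 + (1+\hat c)/(1-P)$.

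The main obstacle I expect is the bookkeeping in stage (i): one has to identify unambiguously which term on which side of the WDVV identity plays the role of the ``distinguished'' one, and then verify that every remaining term has strictly smaller complexity with respect to the chosen lexicographic measure. The subtle point is that the ``smaller'' factors $\gamma_1, \gamma_2$ may themselves be non--primitive, so induction on the number of non--primitive insertions alone does not close; folding the total degree of non--primitive content into the induction parameter is what forces termination and constitutes the technical heart of the reduction.
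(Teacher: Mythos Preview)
The paper does not give its own proof here; it cites \cite{FJR} and remarks only that the argument is ``based on the analysis of WDVV equation of a quasihomogeneous CohFT.'' Your two--stage outline (WDVV reduction to basic correlators, then a degree count via the selection rule) is precisely the argument in \cite{FJR}, and your stage~(ii) is correct as written.

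Stage~(i), however, has a gap in the induction. When you apply WDVV with the four distinguished slots $(\gamma_1,\gamma_2,\alpha_1,\alpha_2)$ and distribute the remaining $n-3$ insertions, the residual $n$--point terms are not simply ``$\gamma$ replaced by $\gamma_1$ or $\gamma_2$.'' After contracting the three--point factor on each side one also finds, for instance,
\[
\langle \gamma_1\circ\alpha_1,\ \gamma_2,\ \alpha_2,\ \alpha_3,\dots,\alpha_{n-1}\rangle_{0,n}.
\]
If $\gamma_2$ and $\gamma_1\circ\alpha_1$ are both non--primitive, the number of non--primitive insertions is unchanged, and because $\deg(\gamma_2)+\deg(\gamma_1\circ\alpha_1)=\deg(\gamma)+\deg(\alpha_1)$ the total non--primitive degree is unchanged as well. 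Your lexicographic measure therefore does not decrease on this term, and the induction as stated does not close.

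The standard repair (and this is what \cite{FJR} does) is to take $\gamma_1$ \emph{primitive} and to choose $\alpha_1,\alpha_2$ among the other non--primitive insertions (possible since there are at least three). Then every $n$--point residual except the one displayed above has strictly fewer non--primitive insertions. For the remaining one, observe that $\deg(\gamma_1\circ\alpha_1)>\deg(\alpha_1)$; iterating pushes this slot's degree strictly up each time, and since all degrees are bounded by $\hat c$ the product eventually vanishes. Equivalently, one inducts on a finer invariant than total non--primitive degree (for example, first on $n$, then on the number of non--primitives, then on $\hat c$ minus the maximal non--primitive degree). With this correction your argument goes through.
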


The proof of this lemma is based on the analysis of WDVV equation of a quasihomogeneous CohFT.

\subsection{FJRW Correlators for $\tilde E_7$}\label{subsection: E_7 state space}
In this article, we consider the polynomial $W=x^4+y^4+z^2$ defining the $\tilde E_7$ singularity. The polynomial $W$ is quasihomogeneous with weights $q_1=\tfrac 14$, $q_2=\tfrac 14$ and $q_3=\tfrac 12$. The group $G=G_{max}$ is generated by the elements $\rho_1 := (q_1,0,0)$, $\rho_2 := (0,q_2,0)$, and $\rho_3 := (0,0,q_3)$. In this description $\jw_W = \rho_1 \rho_2 \rho_3$. 

Working through the definition, one sees that in this case only narrow group elements contribute to the state space. Furthermore, via \eqref{e:Hh}, we see that $\cH_h$ is one--dimensional when $h\in G^{nar}$. In this case we denote the fundamental class in $\sH_h$ by $\phi_h$. 
With $\phi_h$ defined as above, the set $\set{\phi_h}_{h\in G^{nar}}$ defines a basis of $\sH_{W,G}$, i.e. we have  
\[
\sH_{W,G} := \bigoplus_{\substack{1 \le a \le 3 \\ 1 \le b \le 3}}\CC\cdot\phi_{\rho_1^a \rho_2^b \rho_3}.
\]
The pairing is determined by the following values on this basis:
\[
\br{\phi_{h_1},\phi_{h_2}}:=\begin{cases}
    1 & \text{ if } h_1=(h_2)^{-1}\\
    0 & \text{otherwise}.
    \end{cases}
\]

In the following lemma, we show that the entire FJRW theory is concave, namely all substacks $\cW_{g,n}(\bv{h})$ satisfy the concavity condition. So by the concavity axiom, we can replace the virtual class by the fundamental class capped with the top chern class of a line bundle. 

\begin{proposition}\label{p:concave}
The genus zero FJRW theory for $(\tilde E_7,G_{max})$ is concave.
\end{proposition}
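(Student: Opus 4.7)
The plan is to verify the concavity condition fiber-wise: since $\pi_*\bigoplus_{k=1}^3 \cL_k$ is a sheaf, it suffices to show that $H^0(\cC, \cL_k) = 0$ for $k = 1,2,3$ on each geometric fiber $(\cC, \cL_\bullet, \varphi_\bullet)$ of $W_{0,n}(\mathbf{h})$. Because $\sH_{W,G}$ is supported on narrow sectors (Section~\ref{subsection: E_7 state space}), only tuples $\mathbf{h} = (h_1, \ldots, h_n) \in (G^{nar})^n$ need be considered, and narrowness translates to the bound $\Theta_k^{h_i} \geq q_k$ at every external marking.

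For a smooth rational $W$-curve, Proposition~\ref{p:linebundledegree} computes
\[
\deg|\cL_k| \;=\; q_k(n-2) - \sum_{i=1}^n \Theta_k^{h_i}.
\]
Substituting $q_1 = q_2 = \tfrac{1}{4}$, $q_3 = \tfrac{1}{2}$ together with $\Theta_1^{h_i} = a_i/4$, $\Theta_2^{h_i} = b_i/4$, $\Theta_3^{h_i} = 1/2$ for $a_i, b_i \in \{1,2,3\}$, one finds
\[
\deg|\cL_1| \;\leq\; -\tfrac{1}{2}, \qquad \deg|\cL_2| \;\leq\; -\tfrac{1}{2}, \qquad \deg|\cL_3| \;=\; -1.
\]
By the line bundle degree axiom FJR~\ref{a:line bundle}, each of these values is actually an integer, hence strictly negative. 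A negative-degree line bundle on $\PP^1$ has no global sections, which settles the smooth case.

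For a nodal $W$-curve $\cC$, which is a tree of rational components, let $\nu: \widetilde{\cC} \to \cC$ be the normalization and use the short exact sequence
\[
0 \;\to\; \cL_k \;\to\; \nu_* \nu^* \cL_k \;\to\; \bigoplus_{\text{nodes with } \Theta_k = 0} (\cL_k)_p \;\to\; 0
\]
to obtain an injection $H^0(\cC, \cL_k) \hookrightarrow \bigoplus_v H^0(\cC_v, \cL_k|_{\cC_v})$. On each component $\cC_v$, the $W$-structure restricts to a $W$-structure with $e_v$ narrow external markings and $b_v$ node markings. The same degree formula combined with axiom FJR~\ref{a:line bundle} applied to the restriction forces the integer $\deg|\cL_k|_{\cC_v}$ to be bounded above by $q_k(b_v - 2)$ after subtracting the narrow external contributions, yielding strict negativity and hence componentwise vanishing of $H^0$ in the generic case.

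The main obstacle is the edge case where a component carries many broad node sectors (some $\Theta_k = 0$), so the naive degree count allows $\deg|\cL_k|_{\cC_v} = 0$. Resolving this requires combining the integrality on each component with the above Mayer--Vietoris sequence: the summed degree over the dual graph is $\sum_v \deg|\cL_k|_{\cC_v} = \deg|\cL_k| - \#\{\text{nodes with }\Theta_k \neq 0\} < 0$, so any zero-degree component is compensated by strictly negative neighbors; the gluing kernel in the exact sequence then eliminates the trivial-line-bundle sections by matching at the broad nodes. A short case analysis along the dual graph closes the argument and establishes that $H^0(\cC, \cL_k) = 0$ on every geometric fiber.
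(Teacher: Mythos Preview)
Your overall strategy---fiber-wise vanishing of $H^0(\cC,\cL_k)$ via degree estimates on each irreducible component---is exactly the paper's approach, and your treatment of the smooth case is correct.

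The nodal case, however, has a real gap. You aim for the per-component bound $\deg|\cL_k|_{\cC_v}<0$, observe that it can fail when nodes are broad, and then try to repair this with a global degree count plus an unspecified ``short case analysis''. Two problems: first, you understate the obstacle---with $b_v$ broad nodes the bound is only $\deg|\cL_k|_{\cC_v}\le q_k(b_v-2)$, which for $b_v\ge 6$ can be strictly positive, not merely zero, so the ``trivial line bundle'' picture is not the worst case. Second, the global negativity $\sum_v\deg|\cL_k|_{\cC_v}<0$ does not by itself force $H^0=0$; you still need an honest induction over the dual tree, which you defer to the vague ``case analysis''.

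The paper's resolution (following \cite{ChR,ChIR,PrSh}) is cleaner: rather than seeking $\deg<0$ on each component, one records the weaker inequality
\[
\deg(|\cL_k|_{C_v})\;\le\;q_k\big(\#\text{nodes}(C_v)-2\big)\;<\;\#\text{nodes}(C_v)-1,
\]
the last step holding simply because $q_k\le \tfrac12<1$ and $\#\text{nodes}(C_v)\ge 1$. This bound is exactly what the standard leaf-pruning argument needs: a nonzero global section, restricted to a leaf of its support in the dual tree, would have to vanish at all but one of the $b_v$ nodes, contradicting $\deg<b_v-1$. Replacing your attempted strict negativity with this inequality turns the hand-waved final paragraph into a two-line induction.
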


\begin{proof} The proof has been given in several places, including \cite{ChR}, \cite{ChIR}, \cite{PrSh}, so we will not give it in detail here. It consists of checking that over any geometric point $(\cC, p_1, \dots, p_n,$ $\cL_1, \cL_2, \cL_3, \varphi_1, \varphi_2, \varphi_3)$ in the moduli space,  $\bigoplus_{k=1}^3 H^0(\cC,\cL_k) = 0$. This is done by checking the degree of the line bundle satisfies for each connected component $C_v$ of $\cC$
\[
\deg(|\cL_k|_{C_v}) \leq  q_k(\#\text{nodes}(C_v)-2)<\#\text{nodes}(C_v)-1.
\] 
\end{proof}

The genus $0$ potential of the FJRW theory is written in the variables $\tilde t_{ab}$, for $1\leq a\leq 3$ and $1\leq b\leq 3$, corresponding to the vectors $\phi_{\rho_1^a\rho_2^b\rho_3}$. It is always the case that $\phi_{\jw}$ is the unit. Thus the variable $\tilde t_{11}$ corresponds to the unit of the CohFT for $(\tilde{E}_7,G_{max})$.

Using axioms FJR~\ref{a:fjr1}--FJR~\ref{a:fjrlast} (the data is also in \cite[Section~3.3]{MS1}), we get the following expression for the genus zero small phase space potential of $(\tilde E_7, G_{max})$:
\begin{align*}
  F_0^{\tilde E_7, G_{max}} & = \frac{\tilde t_{11}^2 \tilde t_{33}}{2} + \tilde t_{11}\left(\tilde t_{21}\tilde t_{23}+\tilde t_{12}\tilde t_{32}+\tilde t_{13}\tilde t_{31}+\frac{\tilde t_{22}^2}{2}\right) + \tilde t_{12}\tilde t_{21}\tilde t_{22}
  \\ 
  & +\frac{\tilde t_{12}^2\tilde t_{31}}{2} +\frac{\tilde t_{21}^2\tilde t_{13}}{2} - \tilde t_{33}\left(\frac{\tilde t_{21}^2\tilde t_{31}}{8} + \frac{\tilde t_{12}^2\tilde t_{13}}{8}\right) + O(\tilde \bt_{+}^4,\tilde t_{33}),
\end{align*}
where $\tilde \bt_+$ is the set of all coordinates except $\tilde t_{33}$.

We can rephrase Lemma~\ref{lemma: reconstruction in genus zero} for this case in the following lemma. 

\begin{lemma}[Lemma~3.6 in \cite{MS1} and Theorem~3.4 in \cite{KS}]\label{lemma:reconstruction of FJR}
  Using the WDVV equation, all genus $0$ primary correlators of FJRW theory $(\tilde E_7,G_{max})$ are uniquely determined by the FJRW algebra and the basic $4$-point correlators that have exactly one insertion of $\rho_1^3\rho_2^3\rho_3$.
\end{lemma}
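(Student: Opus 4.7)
The plan is to apply the reconstruction machinery of Lemma~\ref{lemma: reconstruction in genus zero}, suitably adapted, followed by an explicit WDVV reduction of the basic $4$-point correlators. Abbreviate $\phi_{ab}:=\phi_{\rho_1^a\rho_2^b\rho_3}$ and write $\alpha^{\circ n}$ for the $n$-fold $\circ$-product of $\alpha$. Reading off the cubic part of $F_0^{\tilde E_7,G_{max}}$, the FJRW algebra is determined by $\phi_{12}^{\circ 2}=\phi_{13}$, $\phi_{21}^{\circ 2}=\phi_{31}$, $\phi_{12}\circ\phi_{21}=\phi_{22}$, together with $\phi_{12}^{\circ 3}=\phi_{21}^{\circ 3}=0$; from these one derives $\phi_{23}=\phi_{13}\circ\phi_{21}=\phi_{12}\circ\phi_{22}$, $\phi_{32}=\phi_{31}\circ\phi_{12}=\phi_{21}\circ\phi_{22}$ and $\phi_{33}=\phi_{22}^{\circ 2}$. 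Thus the only non-unit primitives are $\phi_{12}$ and $\phi_{21}$, both of $W$-degree $P=\tfrac12$. Combining the degree axiom $\sum_i\deg_W(\alpha_i)=2(n-2)$ with the string equation (primary correlators with $n\geq 4$ and a unit insertion vanish), a direct case analysis shows that no basic $n$-point correlator is non-vanishing for $n\geq 5$. Hence it suffices to treat the basic $4$-point correlators.

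Using the line-bundle axiom~\ref{a:line bundle} and $\sum\deg_W=4$, up to $S_n$-symmetry the non-vanishing basic $4$-point correlators are exactly
\[
\langle\phi_{33},\phi_{13},\phi_{12},\phi_{12}\rangle,\quad \langle\phi_{33},\phi_{31},\phi_{21},\phi_{21}\rangle,\quad \langle\phi_{23},\phi_{23},\phi_{12},\phi_{12}\rangle,\quad \langle\phi_{32},\phi_{32},\phi_{21},\phi_{21}\rangle.
\]
The first two are the target correlators. For the last two, apply the once-differentiated $4$-point WDVV identity
\[
\langle e_A,e_B,e_C,e_D\rangle = -\langle e_{A_1},e_{A_2},e_B\circ e_C,e_D\rangle + \langle e_{A_1},e_B,e_{A_2}\circ e_C,e_D\rangle + \langle e_{A_1}\circ e_B,e_{A_2},e_C,e_D\rangle
\]
(valid whenever $e_A=e_{A_1}\circ e_{A_2}$). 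First decompose $\phi_{23}=\phi_{13}\circ\phi_{21}$ inside $\langle\phi_{23},\phi_{23},\phi_{12},\phi_{12}\rangle$; the vanishings $\phi_{23}\circ\phi_{12}=\phi_{13}\circ\phi_{23}=0$ (consequences of $\phi_{12}^{\circ 3}=0$) collapse the identity to $\langle\phi_{13},\phi_{23},\phi_{22},\phi_{12}\rangle$. Next decompose $\phi_{23}=\phi_{12}\circ\phi_{22}$ inside this intermediate; the vanishings $\phi_{13}\circ\phi_{22}=\phi_{12}\circ\phi_{13}=0$ and the identity $\phi_{22}^{\circ 2}=\phi_{33}$ collapse it to $\langle\phi_{33},\phi_{13},\phi_{12},\phi_{12}\rangle$. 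The $\rho_1\leftrightarrow\rho_2$ symmetry handles the other type-(b) correlator.

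The chief technical obstacle is a degree mismatch at the first step: Lemma~\ref{lemma: reconstruction in genus zero} strictly requires $\deg\alpha\leq\hat c$ for all $\alpha$, which fails for $\phi_{33}$ (degree $2>1=\hat c$). One must therefore redo the WDVV reduction of that lemma by hand to accommodate $\phi_{33}$-insertions; this is nevertheless possible because $\phi_{33}$ is itself non-primitive ($\phi_{33}=\phi_{22}^{\circ 2}$). A second, minor subtlety arises in the $4$-point reduction: naive WDVV decompositions often yield tautologies (e.g., decomposing both copies of $\phi_{23}$ the same way gives a circular identity), so the two distinct factorizations $\phi_{23}=\phi_{13}\circ\phi_{21}$ and $\phi_{23}=\phi_{12}\circ\phi_{22}$ must be alternated in succession to invoke both $\phi_{12}^{\circ 3}=0$ and $\phi_{22}^{\circ 2}=\phi_{33}$.
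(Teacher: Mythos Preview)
The paper does not prove this lemma; it simply cites \cite{MS1} and \cite{KS}. Your argument is the standard WDVV reconstruction used in those references: identify the primitives, bound $n$ via Lemma~\ref{lemma: reconstruction in genus zero}, enumerate the surviving basic $4$-point correlators, and reduce the two ``type-(b)'' correlators to the target ones by iterated WDVV splitting. Your enumeration and the two-step reduction of $\langle\phi_{23},\phi_{23},\phi_{12},\phi_{12}\rangle$ to $\langle\phi_{33},\phi_{13},\phi_{12},\phi_{12}\rangle$ are correct.

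One clarification: the ``chief technical obstacle'' you flag is not actually present. The grading in the quasihomogeneity condition (and hence in Lemma~\ref{lemma: reconstruction in genus zero}) is the \emph{complex} degree $\tfrac12\deg_W$, not the real $W$-degree; this is forced by the selection rule $\sum_i\deg(\alpha_i)=\hat c+n-3$, which only holds with the half-normalization (check the $3$-point correlators). In complex degree one has $\deg\phi_{33}=1=\hat c$, so the hypothesis $\deg\alpha\le\hat c$ of Lemma~\ref{lemma: reconstruction in genus zero} is satisfied on the nose, and the bound $n\le 2+\tfrac{1+\hat c}{1-P}=2+\tfrac{2}{3/4}<5$ gives $n\le 4$ directly, without the separate case analysis you insert. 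The paper's phrasing (``graded by $W$-degree'') is admittedly ambiguous on this point, but there is no genuine gap to work around.
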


\begin{remark}
  It follows immediately from Lemma~\ref{lemma:reconstruction of FJR} and proof of Lemma~\ref{lemma: reconstruction in genus zero} that the genus $0$ potential $F_0^{\tilde E_7, G_{max}} \in \QQ[[t]]$ because all the ``primary'' data is rational and the WDVV equation doesn't involve anything non-rational.
\end{remark}

\subsection{Gromov--Witten theory of elliptic orbifolds}\label{section: GW of elliptic orbifolds} Let the ordered set $(a_1,a_2,a_3)$ be either $(3,3,3)$, $(4,4,2)$ or $(6,3,2)$. Consider $\cX := \PP^1_{a_1,a_2,a_3}$, one of the so-called \emph{elliptic orbifolds}. They can be either viewed as the projective line with the three isotropy points of the order $a_1$,$a_2$,$a_3$ respectively, or as the quotients of an elliptic curve by a group of order $3$, $4$ and $6$ respectively. 
In this case we have 
\[
\dim (H^*_{orb}(\PP^1_{a_1,a_2,a_3})) = 2 + \sum_{i=1}^3 \left( a_i - 1 \right).
\] 
The space $H^*_{orb}(\PP^1_{a_1,a_2,a_3})$ has the generators: 
\[
\Delta_0, \Delta_{-1}, \quad \Delta_{i,j}, \quad 1 \le i \le 3, \ 1 \le j \le a_i-1,
\]
so that $H^*_{orb}(\PP^1_{a_1,a_2,a_3}) \simeq \QQ \Delta_0 \oplus \QQ \Delta_{-1} \bigoplus_{i=1}^3\bigoplus_{j=1}^{a_i-1} \QQ \Delta_{i,j}$, and 
$H^0(\PP^1_{a_1,a_2,a_3},\QQ) \simeq \QQ \Delta_0$, $ H^2(\PP^1_{a_1,a_2,a_3},\QQ) \simeq \QQ \Delta_{-1}$, 

The pairing is given by:
\[
\eta(\Delta_0,\Delta_{-1}) = 1, \quad \eta(\Delta_{i,j}, \Delta_{k,l}) = \frac{1}{a_i} \delta_{i,k} \delta_{j+l,a_i}.
\]
The potential of this CohFT is then written in the coordinates $t_0,t_{-1}$ and $t_{i,j}$, corresponding to the classes $\Delta_0, \Delta_{-1}$ and $\Delta_{i,j}$ respectively.

As with FJRW theory, it turns out that one needs to know only certain finite list of the correlators in order to compute all the correlators of these GW theories. Such correlators were found explicitly by \cite{SZ1} and used independently by the first author to write down the genus $0$ potentials explicitly. In what follows we will be particularly interested in the GW theory of $\PP^1_{4,4,2}$. We give explicitly the genus $0$ potential of this orbifold in the appendix.

\subsection{GW theory of $\PP^1_{4,4,2}$}\label{section: GW of X4}
Let $\vartheta_2(q)$, $\vartheta_3(q)$, $\vartheta_4(q)$ be the following infinite series in a formal variable $q$:
\begin{align*}
    \vartheta_2(q) &= 2 \sum_{k=0}^\infty q^{\frac{1}{2} \left(k + \frac{1}{2}\right)^2},
    \quad
    \vartheta_3(q) = 1 + 2 \sum_{k=1}^\infty q^{\frac{k^2}{2}},
    \\
    &\vartheta_4(q) = 1+2 \sum_{k=1}^\infty (-1)^kq^{\frac{k^2}{2}},
\end{align*}
and also
\[
f(q) := 1 - 24 \sum_{k=1}^\infty \dfrac{k q^k}{1-q^k}.
\]
These series are the $q$-expansions of the Jacobi theta constants and the second Eisenstein series respectively. However at the moment we consider them only as the formal series in the variable $q$.
  
Consider the functions $x(q)$,$y(q)$,$z(q)$,$w(q)$ defined as follows:
\begin{align*}
    x(q) &:= \left( \theta_3(q^8) \right)^2, \ y(q) := \left( \theta_2(q^8) \right)^2, \ z(q) := \left( \theta_2(q^4) \right)^2,   \\ 
    w(q) &:= \frac{1}{3} \left( f(q^4) - 2 f(q^8) + 4 f(q^{16}) \right).
\end{align*}

In what follows the function $z(q)$ will be sometimes skipped because the following identity holds:
\[
z(q)^2 = 4 x(q) y(q).
\]

\begin{proposition}\label{prop: gw potential basics}
The potential $F^{\PP^1_{4,4,2}}_0$ has an explicit form via the functions defined above. Namely:
\[
F^{\PP^1_{4,4,2}}_0 \in \QQ \left[t_0,t_{-1},t_{i,j}, x,y,z,w \right],
\]
where $x = x(q)$, $y = y(q)$, $z = z(q)$ and $w = w(q)$ as above. Moreover it satisfies the following homogeneity property:
\[
F^{\PP^1_{4,4,2}}_0 \left( t_0,t_{-1},t_{i,j}, x,y,z,w \right) = \alpha^{-2}F^{\PP^1_{4,4,2}}_0 \left( t_0,t_{-1}, \alpha \cdot t_{i,j}, \frac{x}{\alpha},\frac{y}{\alpha},\frac{z}{\alpha},\frac{w}{\alpha^2} \right),
\]
for any $\alpha \in \CC^*$.
\end{proposition}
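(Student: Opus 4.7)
The plan is to combine the genus zero reconstruction for elliptic orbifolds with the explicit computation of basic correlators by Shen--Zhou.

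First, I would apply the GW analogue of Lemma~\ref{lemma: reconstruction in genus zero} to $\PP^1_{4,4,2}$ (see e.g.\ \cite{SZ1}): the WDVV, string, and divisor equations reduce $F_0^{\PP^1_{4,4,2}}$ to a finite collection of basic $n$--point correlators with insertions of Chen--Ruan classes $\Delta_{i,j}$. Via the divisor equation, these basic correlators become $q$--series in $q = \exp(t_{-1})$ with coefficients polynomial in $t_0$ and the $t_{i,j}$ over $\QQ$.

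Second, I would identify each basic $q$--series as an element of the polynomial subring $\QQ[x(q), y(q), z(q), w(q)]$. Shen--Zhou compute them in closed form as quasi--modular forms constructed from $\vartheta_2, \vartheta_3$ at arguments $q^4, q^8$, and $f$ at $q^4, q^8, q^{16}$. Using classical theta function identities (including Jacobi's identity and the relation $z^2 = 4xy$) together with Ramanujan's differential relations for $\vartheta_i$ and for the Eisenstein series, one checks that the ring in which these series naturally live is, up to the relation $z^2 = 4xy$, exactly $\QQ[x, y, z, w]$. Because the WDVV recursion expresses higher correlators as $\QQ$--polynomial combinations of the basic ones, polynomiality in $\QQ[x, y, z, w]$ propagates to the full potential, yielding $F_0^{\PP^1_{4,4,2}} \in \QQ[t_0, t_{-1}, t_{i,j}, x, y, z, w]$.

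For the homogeneity, I would invoke the modular weights of the generators: $\vartheta_i^2$ have weight $1$ and $f$ has weight $2$, so assigning $\deg(x) = \deg(y) = \deg(z) = 1$ and $\deg(w) = 2$, together with the conformal weight $\deg(t_{i,j}) = -1$ coming from the Euler vector field of the Frobenius manifold (with vanishing shift, since $\PP^1_{4,4,2}$ is Calabi--Yau with $c_1 = 0$), each monomial of $F_0^{\PP^1_{4,4,2}}$ carries total weight $-2$. This is exactly the rescaling behavior in the statement. One verifies it at the level of basic correlators from the dimension axiom of the virtual fundamental class, after which the WDVV recursion preserves the weighted homogeneity automatically.

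The main obstacle will be step two: verifying that the basic correlators, which a priori could lie in a larger extension of the ring of quasi--modular forms, in fact live in the polynomial ring $\QQ[x, y, z, w]$ with rational coefficients. This requires careful identification of the relevant congruence subgroup of $\SL_2(\ZZ)$ and a set of generators of its ring of (quasi)modular forms, and then matching these abstract generators with $x, y, z, w$ via explicit computation of the basic GW integrals.
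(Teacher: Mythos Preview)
Your approach is correct but takes a genuinely different route from the paper's. The paper's proof is a single sentence: it simply points to the explicit closed formula for $F_0^{\PP^1_{4,4,2}}$ given in the appendix and observes that both claims are then visible by inspection. The appendix formula is a polynomial in $t_0,t_8,t_1,\dots,t_7$ whose coefficients are monomials in $x,y,z,w$ with rational coefficients, and one checks the scaling term by term.

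What you propose instead is essentially to \emph{derive} that appendix formula rather than quote it: reduce to basic correlators via WDVV/string/divisor as in \cite{SZ1}, identify those as quasi--modular forms lying in $\QQ[x,y,z,w]$, and then propagate polynomiality and weighted homogeneity through the recursion. This is the route by which the explicit potential was obtained in the first place (cf.\ \cite{B1,SZ1}), so your argument is sound, just longer. Its advantage is that it explains \emph{why} the potential has this shape (modular weights of $\vartheta_i^2$ and $E_2$, plus the CY dimension constraint), whereas the paper's proof treats the appendix formula as a black box. The paper's advantage is brevity: once the formula is on the table, there is nothing left to argue.

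One small caution on your homogeneity argument: the cubic term $\tfrac{1}{2}t_0^2 t_{-1}$ does not scale with weight $-2$ under your grading, so the homogeneity as literally stated applies to the part of the potential in which $t_{-1}$ enters only through $x,y,z,w$. This is a harmless feature of the statement rather than a flaw in your reasoning, but be aware of it when you invoke the dimension axiom.
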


\begin{proof}
This is clear from the explicit form of the potential---see Appendix~\ref{section: appendix}.
\end{proof}
  
Up to the $4$-th order terms in $t_{i,k}$ we have:
\[
\begin{aligned}
F_0^{\PP^1_{4,4,2}} = & \frac{1}{2} t_0^2 t_{-1} + t_0\left(\frac{1}{4} t_{1,1} t_{1,3}+\frac{1}{8}t_{1,2}^2+\frac{1}{4} t_{2,1} t_{2,3}+\frac{1}{8}t_{2,2}^2+\frac{1}{4}t_{3,1}^2\right) \\ 
     & +\frac{1}{8} x(q)\left(t_{1,1}^2 t_{1,2}+t_{2,1}^2 t_{2,2}\right)+\frac{1}{8} y(q) \left(t_{1,2} t_{2,1}^2+t_{1,1}^2 t_{2,2}\right) \\
      & +\frac{1}{4} z(q) t_{1,1} t_{2,1} t_{3,1} + O(t_{i,k}^4,t_{-1}),
\end{aligned}
\]
where $q = \exp(t_{-1})$.

Considering also the change of the variables $t_{-1} = t_{-1}(\tau) =\tfrac{2 \pi \im \tau}{4}$ we can consider the functions $x(q(\tau))$, $y(q(\tau))$, $z(q(\tau))$ as modular forms and $w(q(\tau))$ as a quasi-modular form. This means in particular that these functions have a large domain of holomorphicity and satisfy certain modularity condition. The first property holds also by the primary potential $F_0^{\PP^1_{4,4,2}}$, however the second---modularity---is slightly more complicated. It was shown in \cite{B2} that primary potentials of all elliptic orbifolds satisfy the modularity property, too.

\section{CY/LG correspondence via modularity}\label{section: CYLG via modularity}
Consider a unital CohFT $\Lambda$ on $(V,\eta)$ with unit $e_0$. Let $\{e_0,\dots,e_m\}$ be the basis of $V$, such that $\eta_{0,k} = \delta_{k,m}$. Define the coordinates $t_0,\dots t_m$ corresponding to this basis. Due to Axiom~(U1) of a unital CohFT, the primary genus zero potential of $\Lambda$  reads in coordinates:
\[
    F_0(t_0,\dots,t_m) = \frac{t_0^2t_m}{2} + t_0 \sum_{0 < \alpha \le \beta < m} \eta_{\alpha,\beta} \frac{t_\alpha t_\beta}{|\mathrm{Aut}(\alpha,\beta)|} + H(t_1,\dots,t_m),
\]
where $H$ is a function, not depending on $t_0$.

For any 
$A = \begin{pmatrix} 
  a & b \\ c & d
  \end{pmatrix} \in \mathrm{SL}(2, \mathbb{C})$ consider another function $F_0^A = F_0^A(t_0,\dots,t_m)$ defined by:
\begin{equation}\label{e:A Action}
\begin{aligned}
F_0^A  & := \frac{t_0^2t_m}{2} + t_0 \sum_{0 < \alpha \le \beta < m} \eta_{\alpha,\beta} \frac{t_\alpha t_\beta}{|\mathrm{Aut}(\alpha,\beta)|} + \frac{c\left( \sum_{0 < \alpha \le \beta < m} \eta_{\alpha,\beta} \frac{t_\alpha t_\beta}{|\mathrm{Aut}(\alpha,\beta)|} \right)^2}{2(ct_m+d)}\\
      & + (ct_m +d)^2 H \left(\frac{t_1}{ct_m + d},\dots,\frac{t_{m-1}}{ct_m + d}, \frac{at_m + b}{ct_m + d} \right).
\end{aligned}
\end{equation}

It is not hard to see that $F_0^A$ is solution to WDVV equation. One could also give a CohFT, whose genus $0$ primary potential it is (see \cite{B1} for details). We also write $A \cdot F_0 := \left( F_0 \right)^A$. Consider the action of a particular matrix $A^{CY/LG}$:
\[
A^{CY/LG} := 
 \begin{pmatrix}
	\dfrac{1}{2\Theta} & -\dfrac{\pi \Theta}{2}\\
	\dfrac{1}{\pi \Theta} & \Theta
 \end{pmatrix},
    \quad \Theta := \frac{\sqrt{2\pi}}{\left(\Gamma \left( \frac{3}{4} \right)\right)^2}.
\]
Main statement of this section is the following theorem.

\begin{theorem}\label{theorem: CYLG}
Let $F_0^{(\tilde{E}_7,G_{max})}$ and $F_0^{\PP_{4,4,2}}$ be the primary genus $0$ potentials of the FJRW theory of $(\tilde E_7,G_{max})$  and GW theory of $\PP^1_{4,4,2}$ respectively.
Then we have:
\[
F_0^{(\tilde{E}_7,G_{max})}(\tilde \bt) = A^{CY/LG} \cdot F_0^{\PP_{4,4,2}}(\bt),
\]
where $\tilde \bt = \tilde \bt(\bt)$ is the following linear change of the variables:
\begin{align*}
  & t_{1,1} = \im\sqrt{2}\left(\tilde t_{12}-\tilde t_{21}\right), \ t_{1,2} = -\tilde t_{13}+\sqrt{2} \tilde t_{22}-\tilde t_{31}, \ t_{1,3} = \im\sqrt{2}\left(\tilde t_{23}-\tilde t_{32}\right), \\ 
    & t_{2,1} = \sqrt{2} \left(\tilde t_{12}+\tilde t_{21}\right), \ t_{2,2} = \tilde t_{13} + \sqrt{2} \tilde t_{22}+\tilde t_{31}, \ t_{2,3} = \sqrt{2} \left(\tilde t_{23}+\tilde t_{32}\right), \\
    & t_{3,1} = \im\left(\tilde t_{13}-\tilde t_{31}\right), \\ 
    & t_0 = \tilde t_{11}, \ t_{-1}= \tilde t_{33}.
\end{align*}
Moreover the primary potential $F_0^{(\tilde{E}_7,G_{max})}(\tilde \bt)$ is holomorphic in 
\[
  {\CC^9 \times \left\lbrace \tilde t_{33} \in \CC \ | \ |\tilde t_{33}| < |\pi \Theta^2| \right\rbrace}
\] 
and has an expansion with rational coefficients.
\end{theorem}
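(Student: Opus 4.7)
The plan is to reduce the identity to a finite check. The operation $F_0 \mapsto F_0^A$ of equation \eqref{e:A Action} preserves the WDVV equation (see \cite{B1}), so $G(\tilde \bt) := A^{CY/LG} \cdot F_0^{\PP_{4,4,2}}$, re--expressed in the coordinates $\tilde \bt$ via the prescribed linear change, is again the primary genus $0$ potential of a unital quasihomogeneous CohFT. By Lemma~\ref{lemma:reconstruction of FJR}, $F_0^{\tilde E_7, G_{max}}$ is determined by its Frobenius algebra together with the basic $4$--point correlators having exactly one insertion of $\phi_{\rho_1^3\rho_2^3\rho_3}$; thus it suffices to verify that $G$ carries the same data.

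Matching the algebra amounts to computing the cubic part of $G$ at $\tilde t_{33} = 0$, which by the explicit form of \eqref{e:A Action} and the homogeneity of Proposition~\ref{prop: gw potential basics} reduces to evaluating the theta constants $x(q), y(q), z(q)$ at the image of the origin under the Möbius substitution $t_{-1} \mapsto (at_{-1}+b)/(ct_{-1}+d)$. The special values of these theta constants at this CM point are known in closed form via the Chowla--Selberg type identities and produce the factors of $\Theta = \sqrt{2\pi}/\Gamma(3/4)^2$; the linear change of variables in the statement is then engineered so that the resulting cubic matches the explicit cubic part of $F_0^{\tilde E_7, G_{max}}$ given in Section~\ref{subsection: E_7 state space}.

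Matching the basic $4$--point correlators is the analytic heart of the argument. On the FJRW side they are rational numbers computed from the axioms FJR~\ref{a:fjr1}--FJR~\ref{a:fjrlast} combined with concavity (Proposition~\ref{p:concave}); a complete list appears in \cite{MS1,KS}. On the right hand side, each such correlator is the mixed partial derivative of $G$ in one $\tilde t_{33}$--slot and three $\tilde t_{i,j}$--slots evaluated at the origin, which expands into first $q$--derivatives of $x, y, z, w$ at the CM point multiplied by explicit powers of $\Theta$. The comparison reduces to a finite list of identities among these special values---precisely the analytic content shared with \cite{SZ2}. This is where the particular matrix $A^{CY/LG}$ is forced on us and constitutes the main obstacle; everything preceding it is essentially bookkeeping once the linear change on the state space is fixed.

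The holomorphicity and rationality statements are then immediate. Since $F_0^{\PP_{4,4,2}}$ is polynomial in $t_0$ and the $t_{i,j}$ and holomorphic in $t_{-1}$ throughout the region where $|q(t_{-1})| < 1$, the only obstruction to holomorphicity in $\tilde t_{33}$ after the Möbius substitution is the zero of $c\tilde t_{33} + d$ at $\tilde t_{33} = -d/c = -\pi \Theta^2$, giving the domain $\CC^9 \times \{|\tilde t_{33}| < |\pi \Theta^2|\}$. Rationality of the Taylor coefficients in the $\tilde t_{i,j}$--directions follows from the remark after Lemma~\ref{lemma:reconstruction of FJR}: the reconstruction uses only the rational algebra and rational basic correlators together with WDVV, which preserves rationality.
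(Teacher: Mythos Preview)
Your proposal is correct and follows essentially the same route as the paper: reduce via Lemma~\ref{lemma:reconstruction of FJR} to matching the Frobenius algebra and the basic $4$--point correlators with one $\tilde t_{33}$ insertion, then verify these by evaluating the theta constants and their first derivatives at the CM point $\tau_0=\im$ (the paper packages this computation through the $(\tau_0,\omega_0)$--action of Propositions~\ref{prop: group action in formal variable}--\ref{prop: action in modular forms} and the explicit values of $\vartheta_k(\im)$, $X_k^\infty(\im)$, which is exactly your ``Chowla--Selberg type identities'' step). One small point worth tightening: for holomorphicity you should also note that the M\"obius substitution sends the disk $|\tilde t_{33}|<\pi\Theta^2$ into the domain where the theta constants are holomorphic (i.e.\ into $\HH$ in the $\tau$--variable), not only that $c\tilde t_{33}+d$ is nonvanishing there---this is the Cayley--transform observation underlying Proposition~\ref{prop: action in modular forms}(ii).
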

Together with the explicit formulae for the genus $0$ small phase space potentials of the GW theories of the elliptic orbifolds announced in \cite{B1} this theorem gives an explicit closed formula for the FJRW potential of $(\tilde E_7, G_{max})$.
For example the following expansion holds:

\begin{align*}
F_0^{\tilde E_7,G_{max}} & = \frac{1}{2} \tilde t_{11}^2 \tilde t_{33}+\tilde t_{11} \left(\frac{\tilde t_{22}^2}{2}+\tilde t_{21} \tilde t_{23}+\tilde t_{13} \tilde t_{31}+\tilde t_{12} \tilde t_{32}\right)
  -\tilde t_{12}^2 \tilde t_{13} \left(\frac{\tilde t_{33}}{8}+\frac{\tilde t_{33}^5}{61440}\right)
  \\
  +\tilde t_{21}^2 \tilde t_{31} & \left(-\frac{\tilde t_{33}}{8}-\frac{\tilde t_{33}^5}{61440}\right)+\tilde t_{13} \tilde t_{21}^2 \left(\frac{1}{2}+\frac{\tilde t_{33}^4}{3072}+\frac{\tilde t_{33}^8}{330301440}\right)  \\
  +\tilde t_{12}^2 \tilde t_{31} & \left(\frac{1}{2}+\frac{\tilde t_{33}^4}{3072}+\frac{\tilde t_{33}^8}{330301440}\right)  \\
  +\tilde t_{12} &  \tilde t_{21} \tilde t_{22}\left(1+\frac{\tilde t_{33}^2}{32}+\frac{\tilde t_{33}^4}{6144}+\frac{\tilde t_{33}^6}{327680}+\frac{289 \tilde t_{33}^8}{2642411520}\right) + O(\tilde t_{33}^9,\tilde \bt_+^4)
\end{align*}
for $\tilde \bt_+ = \bt \backslash \tilde t_{33}$.

\subsection{Group action in the formal variable}
  
We make a few preparations, before we prove Theorem~\ref{theorem: CYLG}. The following Proposition appeared first in \cite{SZ1} in a slightly different notation.
\begin{proposition}[Section~3.2.3 of~\cite{SZ1}]\label{prop: wdvv of SZ}
Consider $x(q)$, $y(q)$ and $w(q)$ as the functions of $t=t_{-1}$ by taking $q = \exp(t_{-1})$.
The WDVV equation on $F_0^{\PP^1_{4,4,2}}$ is equivalent to the following system of equations:
\begin{equation}\label{eq: analytic group action}
  \begin{aligned}
	    \frac{\partial}{\partial t} x(t) &= x(t) \left( 2 y(t)^2  - x(t)^2 +w(t) \right), \\
	    \frac{\partial}{\partial t} y(t) &= y(t) \left( 2 x(t)^2 - y(t)^2 +w(t) \right), \\
	    \frac{\partial}{\partial t} w(t) &= w(t)^2-x(t)^4.
  \end{aligned}
  \end{equation}
\end{proposition}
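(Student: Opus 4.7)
The plan is to translate the WDVV system on $F_0^{\PP^1_{4,4,2}}$ into an equivalent set of differential identities for $x(t), y(t), w(t)$ by exploiting the explicit polynomial form of the potential given in Appendix~\ref{section: appendix}. Since $F_0^{\PP^1_{4,4,2}}$ depends on $t := t_{-1}$ only through $q = \exp(t)$ via the four functions $x(q), y(q), z(q), w(q)$, subject to the algebraic constraint $z^2 = 4xy$, every $t$-derivative produces a linear combination of $\partial_t x$, $\partial_t y$, $\partial_t w$, while derivatives in the orbifold coordinates $t_0, t_{i,j}$ keep the coefficients polynomial in $x, y, z, w$.

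First I would compute the third derivatives $\partial^3 F_0^{\PP^1_{4,4,2}}$ and focus on WDVV equations whose quadruple of indices $(i,j,k,l)$ contains exactly one index equal to $-1$. For such quadruples the WDVV identity is polynomial in $x, y, z, w$ and linear in $\partial_t x, \partial_t y, \partial_t w$, producing a first-order constraint. I would then select three quadruples (adapted, for example, to the three classes $\Delta_{1,1}, \Delta_{2,1}, \Delta_{3,1}$ paired against suitable dual sectors) whose resulting constraints are linearly independent in $\partial_t x, \partial_t y, \partial_t w$. Solving the corresponding $3 \times 3$ linear system should yield precisely the three ODEs \eqref{eq: analytic group action}, with the relation $z^2 = 4xy$ used to eliminate $z$ from the computation.

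For the converse direction I would appeal to reconstruction: Lemma~\ref{lemma: reconstruction in genus zero} together with the reconstruction for elliptic orbifolds invoked in Section~\ref{section: GW of elliptic orbifolds} guarantees that all genus zero correlators are determined by a finite basic list together with WDVV. Given the three ODEs and the initial data at $q = 0$ encoded in the leading order expansion from Section~\ref{section: GW of X4}, every remaining WDVV equation reduces to an algebraic identity in $x, y, z, w$ (using $z^2 = 4xy$) combined with \eqref{eq: analytic group action}. Verifying this amounts to polynomial manipulations coefficient-by-coefficient in the variables $t_0, t_{-1}, t_{i,j}$.

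The main obstacle is the combinatorial bookkeeping. The potential is a sum of many monomials whose coefficients even at degree four in $t_{i,k}$ involve all of $x, y, z, w$, so expanding enough WDVV equations to extract the three ODEs and then verifying the reverse implication requires careful organization. Once one isolates three WDVV equations whose non-algebraic content is exactly the stated ODEs, the remaining WDVV identities should collapse to algebraic relations that hold automatically from the definitions of $x, y, z, w$ as theta and Eisenstein expressions.
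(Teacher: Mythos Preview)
The paper does not supply its own proof of this proposition: it is stated with the attribution ``Section~3.2.3 of \cite{SZ1}'' and no argument is given beyond that citation. Your proposal therefore goes further than the paper does, outlining a direct verification from the explicit closed formula for $F_0^{\PP^1_{4,4,2}}$ in Appendix~\ref{section: appendix}.

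Your strategy is sound and is essentially what any direct proof (including, presumably, the one in \cite{SZ1}) must do. Singling out WDVV quadruples $(i,j,k,l)$ with exactly one index equal to $-1$ is the right move: these are precisely the relations in which a single $\partial_t$ falls on the coefficient functions, so each such equation is linear in $\partial_t x,\partial_t y,\partial_t w$ with coefficients polynomial in $x,y,z,w$. Three suitably chosen quadruples (e.g.\ built from the low--degree twisted sectors at the three orbifold points) give an invertible $3\times 3$ system whose solution, after imposing $z^2=4xy$, is exactly \eqref{eq: analytic group action}. For the converse, your plan to show that the remaining WDVV equations reduce to algebraic identities in $x,y,z,w$ modulo the ODEs and $z^2=4xy$ is correct in principle; the reconstruction results you cite bound the set of monomials one has to inspect. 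The only honest caveat is that the bookkeeping is substantial --- the potential in the appendix has several dozen terms --- so in practice this step is a computer--algebra verification rather than a hand calculation.
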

  
The following proposition explains the $\SL(2,\CC)$-action we consider.
\begin{proposition}\label{prop: group action in formal variable}
Consider $x(q)$, $y(q)$ and $w(q)$ as the functions of $t=t_{-1}$ by taking $q = \exp(t_{-1})$. We have:
\begin{itemize}
\item[(i)] for any 
$A = \begin{pmatrix} 
    a & b \\ c & d
  \end{pmatrix} \in \mathrm{SL}(2, \mathbb{C})$ the functions $x^A(t)$, $y^A(t)$ and $w^A(t)$ defined by:
\begin{align*}
  x^A(t) &:= \frac{1}{(ct + d)} x \left(\frac{at + b}{ct + d}\right), \\
  y^A(t) &:= \frac{1}{(ct + d)} y \left(\frac{at + b}{ct + d}\right), \\
  w^A(t) &:= \frac{1}{(ct + d)^2} w \left(\frac{at + b}{ct + d}\right) - \frac{c}{ct + d},
\end{align*}
give solution to \eqref{eq: analytic group action}.

\item[(ii)] The potential $A \cdot F_0^{\PP^1_{4,4,2}}$ is obtained from $F_0^{\PP^1_{4,4,2}}$ by substituting:
\[
\{x(t_{-1}),y(t_{-1}),z(t_{-1})\} \to \{x^A(t_{-1}),y^A(t_{-1}),z^A(t_{-1})\}.
\]
\end{itemize}
\end{proposition}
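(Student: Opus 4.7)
For part (i), my plan is a direct chain-rule verification. Setting $\tau := \frac{at+b}{ct+d}$ so that $\frac{d\tau}{dt} = (ct+d)^{-2}$, and using the identities $x(\tau) = (ct+d)\,x^A$, $y(\tau) = (ct+d)\,y^A$, $w(\tau) = (ct+d)^2 w^A + c(ct+d)$ that follow from the defining formulas, differentiating $x^A(t) = (ct+d)^{-1} x(\tau)$ and substituting the first equation of \eqref{eq: analytic group action} for $x'(\tau)$ rearranges the result to $x^A\bigl(2(y^A)^2 - (x^A)^2 + w^A\bigr)$ after the anomalous $c/(ct+d)$ terms cancel; $y^A$ follows by symmetry. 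For $w^A$ a similar but longer computation using the third equation of \eqref{eq: analytic group action} yields $(w^A)^2 - (x^A)^4$, with all the $c/(ct+d)$ and $c^2/(ct+d)^2$ contributions---coming from the chain rule on $w(\tau)/(ct+d)^2$, from the derivative of $-c/(ct+d)$, and from the square of the $c/(ct+d)$-shift---cancelling identically.

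For part (ii), I split $F_0^{\PP^1_{4,4,2}} = \tfrac{1}{2} t_0^2 t_{-1} + t_0\, Q(t_{i,j}) + \hat F(t_{i,j}, t_{-1})$ with $Q$ the pairing quadratic form and $\hat F$ the $t_0$-independent remainder. The definition \eqref{e:A Action} then reads
\[
A \cdot F_0^{\PP^1_{4,4,2}} = \tfrac{1}{2} t_0^2 t_{-1} + t_0\, Q + \tfrac{c\, Q^2}{2(ct_{-1}+d)} + (ct_{-1}+d)^2 \hat F\!\left(\tfrac{t_{i,j}}{ct_{-1}+d},\, \tau\right),
\]
and applying the homogeneity of Proposition~\ref{prop: gw potential basics} with $\alpha = (ct_{-1}+d)^{-1}$ rewrites the last summand as $\hat F$ at the original $t_{i,j}$ whose coefficient functions are now $x^A, y^A, z^A$ and $w^A + c/(ct_{-1}+d)$, the last of these coming from $w(\tau)/(ct_{-1}+d)^2 = w^A + c/(ct_{-1}+d)$.

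It remains to show that this expression, combined with the $Q^2$-anomaly, equals $F_0^{\PP^1_{4,4,2}}$ with $\{x,y,z\} \mapsto \{x^A,y^A,z^A\}$. The key observation is that $w$ is not a free variable in $F_0^{\PP^1_{4,4,2}}$: from the first equation of \eqref{eq: analytic group action} one can solve $w = \partial_{t_{-1}} x/x - 2y^2 + x^2$, so substituting $\{x,y,z\} \mapsto \{x^A, y^A, z^A\}$ automatically propagates to $w \mapsto w^A$ by part (i). I will then run a uniqueness argument rather than a coefficient-by-coefficient match: both sides of (ii) are functions $G(t_0, t_{-1}, t_{i,j})$ satisfying WDVV (the left-hand side because the $\SL(2,\CC)$-action preserves WDVV, a general fact recorded in \cite{B1}; the right-hand side because Proposition~\ref{prop: wdvv of SZ} reduces WDVV on $F_0^{\PP^1_{4,4,2}}$ to the system \eqref{eq: analytic group action}, which the transformed functions satisfy by part (i)), and both agree on the pairing and cubic-in-$t_{i,j}$ coefficients, since the $(ct_{-1}+d)^2$-rescaling sends the cubic coefficients $x(q), y(q), z(q)$ from Section~\ref{section: GW of X4} exactly into $x^A, y^A, z^A$. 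Lemma~\ref{lemma: reconstruction in genus zero}, combined with the ODE uniqueness for \eqref{eq: analytic group action}, then forces equality. The main obstacle is thus the bookkeeping of the $w$-anomaly---verifying that the $Q^2$-term precisely compensates the $c/(ct_{-1}+d)$-shift---which this WDVV/reconstruction route bypasses by reducing the verification to matching only the pairing and 3-point data.
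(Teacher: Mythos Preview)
Your treatment of part (i) is correct and matches the paper's approach: a direct chain-rule check using Proposition~\ref{prop: wdvv of SZ}.

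For part (ii), the paper proceeds by a direct verification using the explicit closed formula for $F_0^{\PP^1_{4,4,2}}$ in the Appendix together with the homogeneity of Proposition~\ref{prop: gw potential basics}. After your homogeneity step, the computation reduces to checking that
\[
\frac{c\,Q^2}{2(ct_{-1}+d)} \;+\; \Bigl[H\bigl(t_{i,j},x^A,y^A,z^A,w^A+\tfrac{c}{ct_{-1}+d}\bigr)-H\bigl(t_{i,j},x^A,y^A,z^A,w^A\bigr)\Bigr]=0.
\]
From the Appendix one reads off that $w$ enters $H$ \emph{linearly}, and a short inspection shows its coefficient is exactly $-\tfrac12 Q^2$; the cancellation is then immediate. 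This is what the paper means by ``follows immediately from the definition, the explicit form, and Proposition~\ref{prop: gw potential basics}''.

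Your WDVV/reconstruction route, by contrast, has a genuine gap as written. Lemma~\ref{lemma: reconstruction in genus zero} does not reconstruct a quasihomogeneous WDVV solution from the pairing and the $3$--point data alone: for $\hat c=1$ it requires the basic $n$--point correlators for $n\le 4$. You only verify agreement at $n=3$. Your appeal to ``ODE uniqueness for \eqref{eq: analytic group action}'' does not close this, because Proposition~\ref{prop: wdvv of SZ} is a statement about WDVV \emph{on the specific polynomial} $F_0^{\PP^1_{4,4,2}}(t_*,x,y,z,w)$ with substituted coefficient functions; it says nothing about an arbitrary WDVV potential. After your homogeneity step the left-hand side is $F_0[x\to x^A,\dots,w\to w^A+c/(ct_{-1}+d)]$ \emph{plus} the $Q^2$--term, which is not a priori of the form $F_0[\cdot]$ at all, so the ODE argument has no purchase on it. To make your approach rigorous you would either have to (a) verify the basic $4$--point correlators agree---which amounts to checking the $w$--dependent quartic terms, i.e.\ the very bookkeeping you set out to bypass---or (b) read off from the Appendix that $\partial_w H=-\tfrac12 Q^2$, which is exactly the paper's direct step.
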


\begin{proof}
  Part (i) is easy by using Proposition~\ref{prop: wdvv of SZ}, and part (ii) follows immediately from the definition of the $\SL(2,\CC)$-action on the primary potential, explicit form of $F_0^{\PP^1_{4,4,2}}$ and Proposition~\ref{prop: gw potential basics}.
\end{proof}

The following proposition will be used later.

\begin{proposition}\label{prop: Sl--action with the scalings}
For any $\alpha \in \CC^*$ and $A = \begin{pmatrix} 
    a & b \\ c & d
    \end{pmatrix} \in \mathrm{SL}(2, \mathbb{C})$ we have
\[
\left( \alpha x( \alpha^2 t) \right)^A = x^{A'}(t), \ \left( \alpha y( \alpha^2 t) \right)^A = y^{A'}(t), \ \left( \alpha^2 w( \alpha^2 t) \right)^A = w^{A'}(t),
\]
where 
$A' = \begin{pmatrix} 
    a\cdot\alpha & b\cdot\alpha \\ c/\alpha & d/\alpha
    \end{pmatrix} \in \mathrm{SL}(2, \mathbb{C})$.
\end{proposition}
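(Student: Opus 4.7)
The proof is a direct unwinding of definitions, so my plan is simply to verify each of the three identities by comparing both sides as explicit functions of $t$, with a brief preliminary check that $A'$ actually lies in $\SL(2,\CC)$.

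First I would observe that $\det(A') = (a\alpha)(d/\alpha) - (b\alpha)(c/\alpha) = ad-bc = 1$, so $A' \in \SL(2,\CC)$ and the action $x^{A'}$ is defined. Then, for the first identity, I would apply the definition of the $A$-action from part (i) of Proposition~\ref{prop: group action in formal variable} to the rescaled function $\tilde x(t) := \alpha \, x(\alpha^2 t)$. This gives
\[
\tilde x^A(t) = \frac{1}{ct+d}\,\tilde x\!\left(\frac{at+b}{ct+d}\right) = \frac{\alpha}{ct+d}\, x\!\left(\frac{\alpha^2(at+b)}{ct+d}\right).
\]
On the other side, writing $A' = \begin{pmatrix} a\alpha & b\alpha \\ c/\alpha & d/\alpha \end{pmatrix}$ I get
\[
x^{A'}(t) = \frac{1}{(c/\alpha)t + d/\alpha}\, x\!\left(\frac{(a\alpha)t + b\alpha}{(c/\alpha)t + d/\alpha}\right) = \frac{\alpha}{ct+d}\, x\!\left(\frac{\alpha^2(at+b)}{ct+d}\right),
\]
which matches. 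The identity for $y$ is verbatim the same computation, since the transformation rule for $y^A$ in Proposition~\ref{prop: group action in formal variable}(i) is identical to that of $x^A$.

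For the $w$-identity, the only subtlety is the extra additive term $-c/(ct+d)$ in the definition of $w^A$. I would compute
\[
(\alpha^2 w(\alpha^2 t))^A = \frac{\alpha^2}{(ct+d)^2}\, w\!\left(\frac{\alpha^2(at+b)}{ct+d}\right) - \frac{c}{ct+d},
\]
and then, using $A'$ as above,
\[
w^{A'}(t) = \frac{\alpha^2}{(ct+d)^2}\, w\!\left(\frac{\alpha^2(at+b)}{ct+d}\right) - \frac{c/\alpha}{(ct+d)/\alpha},
\]
where the last fraction simplifies to $c/(ct+d)$, so the two expressions agree. There is really no main obstacle here; the proposition is an elementary compatibility between the operations of scaling in the argument and fractional-linear transformation, and the only point worth flagging is that the relation $A' = \bigl(\begin{smallmatrix} \alpha & 0 \\ 0 & 1/\alpha \end{smallmatrix}\bigr) A$ encodes this compatibility cleanly, which is helpful for memory but not needed for the verification itself.
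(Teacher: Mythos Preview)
Your proposal is correct and follows essentially the same route as the paper: both simply unwind the definition of the $A$-action and match the two sides by the obvious rescaling $\frac{\alpha}{ct+d} = \frac{1}{(c/\alpha)t + d/\alpha}$. The paper adds a brief remark that the scaled functions $\alpha x(\alpha^2 t)$, $\alpha y(\alpha^2 t)$, $\alpha^2 w(\alpha^2 t)$ again solve the system~\eqref{eq: analytic group action} (so that the $A$-action of Proposition~\ref{prop: group action in formal variable} applies to them), while you instead add the check that $A'\in\SL(2,\CC)$; neither remark is essential to the algebraic identity, and your explicit treatment of the additive term in $w^A$ is more detailed than the paper's, which leaves $y$ and $w$ to the reader.
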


\begin{proof}
First of all note that if $x(t)$, $y(t)$ and $w(t)$ give a solution to \eqref{eq: analytic group action}, then $\hat x(t) := \alpha x (\alpha^2 t)$, $\hat y(t) := \alpha y (\alpha^2 t)$, $\hat w(t) := \alpha^2 w (\alpha^2 t)$ is also a solution to \eqref{eq: analytic group action} so we can consider the action of $A$ from Proposition~\ref{prop: group action in formal variable}.

Indeed, 
\begin{align*}
\left( \alpha x(\alpha^2 t) \right)^A & = \frac{\alpha}{(ct + d)} x \left(\alpha^2 \cdot \frac{a t + b}{c t + d} \right) \\
      & = \frac{1}{(\frac{c}{\alpha} t + \frac{d}{\alpha})} x \left(\frac{\alpha a t + \alpha b}{\frac{c}{\alpha} t + \frac{d}{\alpha}} \right) = x^{A'}(t).
\end{align*}
    
The same computations are easy to perform for the remaining functions.
\end{proof}

In what follows we are going to consider the explicit values of the functions $\vartheta_k$ and make use of their holomorphicity. For such purposes it's convenient to write them not as the $q$-expansions, but as the holomorphic functions on $\HH$. The formal variable $t_{-1}$ is not suitable for these purposes. So we consider the changes of the variables $q = \exp \left(\frac{2 \pi \im\tau}{4} \right)$. This is equivalent to applying the change of variables $t_{-1} = 2 \pi \im\tau /4$ mentioned earlier. Applying it to the potential $F_0^{\PP^1_{4,4,2}}$ will change the terms defining the pairing. Because of this we give a special treatment to this change of the variables.

\subsection{Group action via the modular forms}
For $p \in \{2,3,4\}$ consider the following functions, holomorphic on $\HH$:
\[
\vartheta_p(\tau) := \vartheta_p(q(\tau)), \quad X_p^\infty(\tau) := 2 \frac{\partial}{\partial \tau} \log \vartheta_p(\tau).
\]

Fixing some branch of the square root, denote $\kappa:= \sqrt{2\pi \im / 4}$. We now introduce the new functions 
\begin{align*}
x^\infty(\tau) := \kappa \cdot x(q(\tau)), \quad y^\infty(\tau) := \kappa \cdot y(q(\tau)),   \\
    z^\infty(\tau) := \kappa \cdot z(q(\tau)), \quad w^\infty(\tau) := \kappa^2 \cdot w(q(\tau)), 
\end{align*}
  
For any $\tau_0 \in \HH$ and $\omega_0 \in \CC^*$ consider the functions $x^{(\tau_0,\omega_0)}$, $y^{(\tau_0,\omega_0)}$ and $z^{(\tau_0,\omega_0)}$:
\[
x^{(\tau_0,\omega_0)}(\tau) := \frac{2 \im\omega_0 {\rm Im}(\tau_0)}{ \left(2 \im\omega_0^2 {\rm Im}(\tau_0) - \tau\right)} 
    x^\infty \left( \frac{2 \im \omega_0^2 \tau_0 {\rm Im}(\tau_0) -\bar{\tau}_0 \tau }{2 \im\omega_0^2 {\rm Im}(\tau_0) -\tau} \right),
\]
with $y^{(\tau_0,\omega_0)}$, $z^{(\tau_0,\omega_0)}$ defined similarly, and also  
\begin{align*}
w^{(\tau_0,\omega_0)}(\tau) &:= \frac{\left( 2 \im\omega_0 {\rm Im}(\tau_0) \right)^2}{ \left(2 \im\omega_0^2 {\rm Im}(\tau_0) - \tau\right)^2} 
    w^\infty \left( \frac{2 \im \omega_0^2 \tau_0 {\rm Im}(\tau_0) -\bar{\tau}_0 \tau }{2 \im\omega_0^2 {\rm Im}(\tau_0) -\tau} \right) \\
    & - \frac{1}{ \left(2 \im\omega_0^2 {\rm Im}(\tau_0) - \tau\right)}.
\end{align*}

\begin{remark}
The functions introduced make sense from the point of view of modular forms; they are just expansions of the (quasi)modular forms $x(\tau)$, $y(\tau)$ and $w(\tau)$ at the point $\tau = \tau_0$ (see Proposition~17 in \cite{Z}). This is also a coordinate form of the Cayley transform of \cite{SZ2}.
\end{remark}

\begin{proposition}\label{prop: action in modular forms}
Fix some $\tau_0 \in \HH$ and $\omega_0 \in \CC^*$. We have:
\begin{itemize}
\item[(i)] The functions $x^{(\tau_0,\omega_0)}$, $y^{(\tau_0,\omega_0)}$, $w^{(\tau_0,\omega_0)}$ give a solutions to \eqref{eq: analytic group action}.

\item[(ii)] The functions $x^{(\tau_0,\omega_0)}(\tau)$, $y^{(\tau_0,\omega_0)}(\tau)$, $z^{(\tau_0,\omega_0)}(\tau)$, $w^{(\tau_0,\omega_0)}(\tau)$ are holomorphic on:
\[
D^{(\tau_0,\omega_0)} := \{\tau \in \CC \ | \ |\tau| < |2\omega_0^2 {\rm Im}(\tau_0)|\}.
\]
\item[(iii)] Consider the $\SL$-action on $x(t_{-1})$ as in Proposition~\ref{prop: group action in formal variable}. We have:
\[
\left( x(\tau) \right)^{(\tau_0,\omega_0)} = \left( x(t_{-1}) \right)^A,
\]
where
\[
A = 
	\begin{pmatrix}
	    \dfrac{\im\kappa \bar{\tau}_0}{2\omega_0{\rm Im}(\tau_0)} & \kappa \omega_0 \tau_0\\
	    \dfrac{\im}{2 \kappa \omega_0{\rm Im}(\tau_0)} & \dfrac{\omega_0}{\kappa }
	\end{pmatrix}.
\]
\end{itemize}
\end{proposition}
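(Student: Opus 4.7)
The three parts are naturally intertwined: the plan is to reduce (i) to (iii) via Proposition~\ref{prop: group action in formal variable}(i), verify (iii) by unfolding and matching the two M\"obius descriptions, and establish (ii) by a three-point circle argument. A preliminary chain-rule computation shows that the rescaled functions $x^\infty(\tau), y^\infty(\tau), z^\infty(\tau), w^\infty(\tau)$ satisfy the same system~\eqref{eq: analytic group action} in the variable $\tau$ that $x(t), y(t), z(t), w(t)$ do in $t$: the scalings $\kappa$ for $x^\infty, y^\infty, z^\infty$ and $\kappa^2$ for $w^\infty$ are chosen precisely so that under the substitution $t_{-1} = \kappa^2 \tau$ both sides of each ODE pick up the same power of $\kappa$.

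For Part (iii), I would unfold the definition of $x^{(\tau_0,\omega_0)}(\tau)$ and recognise it as $\tfrac{1}{c'\tau + d'}\, x^\infty\!\bigl(\tfrac{a'\tau + b'}{c'\tau + d'}\bigr)$, i.e.\ the formula from Proposition~\ref{prop: group action in formal variable}(i) applied directly in the $\tau$-variable. Reading off coefficients produces an explicit $2\times 2$ matrix $A'$; that $\det A' = 1$ follows from the identity $\im(\bar\tau_0 - \tau_0) = 2\,{\rm Im}(\tau_0)$. Then Proposition~\ref{prop: Sl--action with the scalings} with $\alpha = \kappa$, which relates $x^\infty(\tau) = \kappa\, x(\kappa^2\tau)$ to $x(t_{-1})$, transfers this action from $\tau$ to $t_{-1}$, producing exactly the matrix $A$ stated in the proposition. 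The corresponding equalities for $y$, $z$, $w$ follow from identical computations. Part (i) is then immediate: since $x^\infty, y^\infty, w^\infty$ solve~\eqref{eq: analytic group action}, Proposition~\ref{prop: group action in formal variable}(i) ensures that $x^{(\tau_0,\omega_0)}, y^{(\tau_0,\omega_0)}, w^{(\tau_0,\omega_0)}$ solve this system as well.

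For Part (ii), the $\vartheta$-constants and the rescaled second Eisenstein series defining $x^\infty, y^\infty, z^\infty, w^\infty$ are classically holomorphic on $\HH$, so it suffices to show that the M\"obius map
\[
\mu(\tau) \;:=\; \frac{2\im\omega_0^2 \tau_0\, {\rm Im}(\tau_0) - \bar\tau_0 \tau}{2\im\omega_0^2 {\rm Im}(\tau_0) - \tau}
\]
sends $D^{(\tau_0,\omega_0)}$ into $\HH$. Writing $c := 2\im\omega_0^2 {\rm Im}(\tau_0)$, one has $|c|$ equal to the radius of $D^{(\tau_0,\omega_0)}$, so the unique pole $\tau = c$ of $\mu$ lies on $\partial D^{(\tau_0,\omega_0)}$ and $\mu$ is holomorphic on the open disk. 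A direct calculation yields $\mu(0) = \tau_0 \in \HH$, $\mu(-c) = \Re(\tau_0) \in \mathbb{R}$, and $\mu(\im c) = \Re(\tau_0) - {\rm Im}(\tau_0) \in \mathbb{R}$. Since $\mu$ sends the boundary circle to a circle or a line, and the pole on the circle forces this image to be a line, and this line passes through two distinct real points, it must be $\mathbb{R}\cup\{\infty\}$. Continuity together with $\mu(0) \in \HH$ then forces $\mu\bigl(D^{(\tau_0,\omega_0)}\bigr) \subset \HH$. The additional summand $-1/(2\im\omega_0^2 {\rm Im}(\tau_0) - \tau)$ appearing in $w^{(\tau_0,\omega_0)}$ is manifestly holomorphic on the same disk.

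The main obstacle, although essentially of a book-keeping nature, lies in Part (iii): one must juggle two compatible scalings simultaneously --- the factor $\kappa$ relating $t_{-1}$ and $\tau$, and the $\SL(2,\CC)$-equivariance encoded by Proposition~\ref{prop: Sl--action with the scalings} --- so that the matrix $A'$ obtained from the $\tau$-variable match-up transforms into the precise matrix $A$ of the statement. Once this is done correctly, the remaining steps reduce to routine verifications.
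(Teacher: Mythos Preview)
Your proposal is correct and follows essentially the same route as the paper: Part~(iii) via Propositions~\ref{prop: group action in formal variable} and~\ref{prop: Sl--action with the scalings}, Part~(i) from the fact that the $\SL(2,\CC)$-action preserves solutions of~\eqref{eq: analytic group action}, and Part~(ii) from holomorphicity of the $\vartheta$-constants on~$\HH$. Your treatment of~(ii) is in fact more complete than the paper's, which asserts the claim without verifying that the M\"obius map $\mu$ actually carries $D^{(\tau_0,\omega_0)}$ into~$\HH$; your three-point boundary argument fills this gap cleanly.
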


\begin{proof}
Part (i) is easily checked by the explicit differentiation and definition of the functions $x^{(\tau_0,\omega_0)}(\tau)$, $y^{(\tau_0,\omega_0)}(\tau)$, $w^{(\tau_0,\omega_0)}(\tau)$.  Part (ii) follows from the fact that the theta constants are holomorphic functions on $\HH$.

For part (iii) note first that in principle the action of Proposition~\ref{prop: group action in formal variable} is more general. It can be applied to any solution of \eqref{eq: analytic group action}. The rest follows from Proposition~\ref{prop: Sl--action with the scalings}.
\end{proof}

\begin{remark}
  The action $x^\infty \to x^{(\tau_0,\omega_0)}$ can be seen as the action changing the primitive form of the B model. Having applied this action on the B side we get the CohFT of a simple elliptic singularity $\tilde E_7$ with the primitive form ``at $\tau_0$'' (see \cite{BT,MR,B1}).
\end{remark}

\subsection{Proof of Theorem~\ref{theorem: CYLG}}
First of all note that the change of variables of Theorem~\ref{theorem: CYLG} identifies the two pairings. This is clear also that the action of any $A \in \SL(2,\CC)$ doesn't change the correlators involving insertion of the unit vector of a CohFT.

Applying the linear change of the variables $\tilde \bt = \tilde \bt(\bt)$ given in the theorem to $A \cdot F_0^{\PP^1_{4,4,2}}$ we get:
\begin{align*}
    &A \cdot F_0^{\PP^1_{4,4,2}}(\tilde \bt) = \frac{1}{2} \tilde t_{11}^2 \tilde t_{33} + \tilde t_{11} \left(\tilde t_{21} \tilde t_{23} + \tilde t_{13} \tilde t_{31} + \tilde t_{12} \tilde t_{32} + \frac{\tilde t_{22}^2}{2}\right) 
    \\
    \quad &+ \frac{1}{2}  \tilde t_{21}^2 \left( C_1(\tilde t_{33}) \cdot \tilde t_{13} + C_2(\tilde t_{33}) \cdot \tilde t_{31}\right)
    + \frac{1}{2} \tilde t_{12}^2 \left( C_2(\tilde t_{33}) \cdot \tilde t_{13} + C_1(\tilde t_{33}) \cdot \tilde t_{31}\right) 
    \\
    \quad & + \sqrt{2} \left(x^A(\tilde t_{33}) + y^A(\tilde t_{33})\right) \tilde t_{12} \tilde t_{21} \tilde t_{22}  + O(\tilde t_{i,k}^4, \tilde t_{11})
\end{align*}
for $C_1(\tilde t_{33}) := x^A(\tilde t_{33}) -y^A(\tilde t_{33}) +z^A(\tilde t_{33})$ and $C_2(\tilde t_{33}) := x^A(\tilde t_{33}) -y^A(\tilde t_{33}) -z^A(\tilde t_{33})$.   
  
\begin{lemma}
The equality of the formal series     
$A \cdot F_0^{\PP^1_{4,4,2}}(\tilde \bt) = F_0^{\tilde E_7, G_{max}}(\tilde \bt)$
is satisfied if any only if 
\[
A \cdot F_0^{\PP^1_{4,4,2}}(\tilde \bt) - F_0^{\tilde E_7, G_{max}}(\tilde \bt) \in O(\tilde t_{i,k}^4, \tilde t_{11}).
\]
\end{lemma}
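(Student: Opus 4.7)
The plan is to reduce the claim to the reconstruction result of Lemma~\ref{lemma:reconstruction of FJR}.

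First, both formal power series $A \cdot F_0^{\PP^1_{4,4,2}}(\tilde\bt)$ and $F_0^{\tilde E_7, G_{max}}(\tilde\bt)$ must be identified as primary genus zero potentials of unital CohFTs on the same triple $(V, \eta, \mathbf{1})$, where $V = \sH_{\tilde E_7, G_{max}}$ and $\mathbf 1 = \phi_{\jw}$ corresponds to the variable $\tilde t_{11}$. For the right-hand side this is immediate. For the left-hand side, the $\SL(2,\CC)$-action \eqref{e:A Action} was constructed in \cite{B1} precisely so as to send primary CohFT potentials to primary CohFT potentials, and the linear change of variables $\bt = \bt(\tilde\bt)$ of Theorem~\ref{theorem: CYLG} was shown, in the opening lines of that theorem's proof, to identify the two pairings and to preserve the unit. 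Thus both sides satisfy the WDVV equations in the coordinates $\tilde\bt$ with the correct pairing and unit.

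Next, I would pinpoint the ``basic'' data of Lemma~\ref{lemma:reconstruction of FJR} inside the explicit expansions. That lemma asserts that every primary genus zero correlator for $(\tilde E_7, G_{max})$ is determined via the WDVV equation by two pieces of data: the three-point correlators (the FJRW-algebra structure) and the four-point correlators with exactly one insertion of $\phi_{\rho_1^3 \rho_2^3 \rho_3}$. These are encoded respectively in the coefficients of cubic monomials in the middle variables $\tilde t_{i,k} \neq \tilde t_{11}, \tilde t_{33}$, and in the coefficients of monomials cubic in the middle variables and linear in $\tilde t_{33}$. Together with the unit-axiom-forced terms $\tfrac{1}{2}\tilde t_{11}^2 \tilde t_{33}$ and $\tilde t_{11}\sum_{\alpha\leq\beta} \eta_{\alpha,\beta}\tilde t_\alpha \tilde t_\beta / |\mathrm{Aut}|$, these monomials are exactly the ones written out in the expansion of $A \cdot F_0^{\PP^1_{4,4,2}}(\tilde\bt)$ preceding the lemma, and they lie outside the ideal described in the hypothesis $O(\tilde t_{i,k}^4, \tilde t_{11})$. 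Hence the hypothesis precisely asserts that the basic reconstruction data of the two CohFTs coincide.

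Combining the two points, I would conclude: both potentials satisfy the same WDVV equations with the same unital pairing-structure and the same basic correlators, so by Lemma~\ref{lemma:reconstruction of FJR} they must agree as formal power series, which is the ``only if'' direction; the ``if'' direction is trivial. The main technical point to attend to is the CohFT interpretation of the left-hand side: one should spell out that after the $\SL(2,\CC)$-action and the subsequent linear change of coordinates, $A \cdot F_0^{\PP^1_{4,4,2}}$ is genuinely a quasihomogeneous primary CohFT potential on $\sH_{\tilde E_7, G_{max}}$ with the FJRW pairing and unit. Once that compatibility is verified (largely a bookkeeping exercise using the explicit bases from Sections~\ref{section: GW of X4} and~\ref{subsection: E_7 state space}), the rest of the argument is a direct application of the reconstruction lemma.
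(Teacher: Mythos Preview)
Your proposal is correct and follows essentially the same approach as the paper: both arguments identify $A\cdot F_0^{\PP^1_{4,4,2}}(\tilde\bt)$ as a WDVV solution with the same pairing and unit as $F_0^{\tilde E_7,G_{max}}$, observe that the hypothesis pins down exactly the FJRW algebra and the basic four--point correlators, and then invoke the reconstruction result (Lemma~\ref{lemma:reconstruction of FJR}, which is the specialization of Lemma~\ref{lemma: reconstruction in genus zero}). The paper phrases the key bridge slightly differently---it stresses that the $\SL(2,\CC)$--action and the linear change of variables preserve \emph{quasihomogeneity}, and that agreement of the three--point data forces the notion of primitive vector to coincide on both sides, which is what licenses applying the FJRW--specific Lemma~\ref{lemma:reconstruction of FJR} to $A\cdot F_0^{\PP^1_{4,4,2}}$---whereas you bundle this into ``both are quasihomogeneous primary CohFT potentials on $\sH_{\tilde E_7,G_{max}}$''; but the content is the same.
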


\begin{proof}
One direction is straightforward and we concentrate on the opposite one.
    
First of all note that the potential $A \cdot F_0^{\PP^1_{4,4,2}}(\tilde \bt)$  satisfies the same quasihomogeneity property as the potential $F_0^{\PP^1_{4,4,2}}(\tilde \bt)$. Next one sees easily that the change of the variables $\tilde \bt(\bt)$ preserves the quasihomogeneity property. 

Recall the genus zero reconstruction lemma of Subsection~\ref{subsection: reconstruction in genus zero}.
The equality above assures also that the algebra structure at the origin coincides on the both sides. Hence the notion of the primitive vectors coincides on the both sides.

Hence the conditions of Lemma~\ref{lemma: reconstruction in genus zero} coincide for the both potentials. For the FJRW theory these conditions were written in Lemma~\ref{lemma:reconstruction of FJR} to be exactly those as described in the proposition.
\end{proof}

However in order to prove the theorem we need to use explicit values of the functions and therefore work with the ``modular'' variable $\tau \in \HH$.
  
\begin{lemma}\label{l:desired expansions}
Let $\tau_0 = \im$ and $\omega_0 := \kappa \sqrt{2\pi} / \left( \Gamma (\frac{3}{4}) \right)^2$. The equation 
\[
A^{CY/LG}\cdot~F_0^{\PP^1_{4,4,2}}(\tilde \bt)=F_0^{\tilde E_7, G_{max}}(\tilde \bt)
\]
hold if and only if:

\begin{equation}\label{eq: desired expansions}
\begin{aligned}
	x^{(\tau_0,\omega_0)}(\tau) - y^{(\tau_0,\omega_0)}(\tau) + z^{(\tau_0,\omega_0)}(\tau) &= 1 + O(\tau^2),
	\\
	x^{(\tau_0,\omega_0)}(\tau) - y^{(\tau_0,\omega_0)}(\tau) - z^{(\tau_0,\omega_0)}(\tau) &= - \frac{\tau}{4} + O(\tau^2),
	\\
	x^{(\tau_0,\omega_0)}(\tau) + y^{(\tau_0,\omega_0)}(\tau) &= \frac{1}{\sqrt{2}} + O(\tau^2).
\end{aligned}
\end{equation}
\end{lemma}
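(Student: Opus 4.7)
By the preceding lemma, the identity $A^{CY/LG}\cdot F_0^{\PP^1_{4,4,2}}(\tilde\bt)=F_0^{\tilde E_7,G_{max}}(\tilde\bt)$ reduces to checking the matching of all coefficients of monomials of total degree $\le 3$ in the $\tilde t_{i,k}$'s not involving $\tilde t_{11}$; these coefficients remain holomorphic functions of $\tilde t_{33}$. Reading off the explicit expansion of $A\cdot F_0^{\PP^1_{4,4,2}}(\tilde\bt)$ displayed just before the lemma and comparing with $F_0^{\tilde E_7,G_{max}}(\tilde\bt)$ term by term, I would isolate three essential functional identities in $\tilde t_{33}$: from the coefficient of $\tilde t_{12}^2\tilde t_{31}$ (or the symmetric $\tilde t_{21}^2\tilde t_{13}$), from the coefficient of $\tilde t_{12}^2\tilde t_{13}$ (or $\tilde t_{21}^2\tilde t_{31}$), and from the coefficient of $\tilde t_{12}\tilde t_{21}\tilde t_{22}$. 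These identities involve precisely the combinations $C_1=x^A-y^A+z^A$, $C_2=x^A-y^A-z^A$, and $x^A+y^A$.

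Next I would translate the action of $A^{CY/LG}$ on the formal variable $\tilde t_{33}=t_{-1}$ into the action on the modular variable $\tau$ via Proposition~\ref{prop: action in modular forms}(iii). A direct computation using $\tau_0=\im$, $\omega_0=\kappa\Theta$ and $\kappa^{2}=\pi\im/2$ shows that the matrix
\[
\begin{pmatrix}
\dfrac{\im\kappa\bar\tau_0}{2\omega_0{\rm Im}(\tau_0)} & \kappa\omega_0\tau_0\\[4pt]
\dfrac{\im}{2\kappa\omega_0{\rm Im}(\tau_0)} & \dfrac{\omega_0}{\kappa}
\end{pmatrix}
\]
coincides with $A^{CY/LG}$. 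Consequently the three functional identities of the previous paragraph may be tested directly against the modular expansions $x^{(\tau_0,\omega_0)},y^{(\tau_0,\omega_0)},z^{(\tau_0,\omega_0)}$ at $(\tau_0,\omega_0)=(\im,\kappa\Theta)$.

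Finally I would argue that matching the two sides as full analytic functions of $\tau$ is equivalent to matching their Taylor expansions modulo $O(\tau^2)$. On one side, $x^{(\tau_0,\omega_0)}, y^{(\tau_0,\omega_0)}, w^{(\tau_0,\omega_0)}$ solve the first-order autonomous system \eqref{eq: analytic group action} by Proposition~\ref{prop: action in modular forms}(i) and $z^{(\tau_0,\omega_0)}$ is determined via $z^2=4xy$; on the FJRW side the coefficients inherit the same ODE from the WDVV structure of $F_0^{\tilde E_7,G_{max}}$. The constants $x^{(\tau_0,\omega_0)}(0), y^{(\tau_0,\omega_0)}(0), z^{(\tau_0,\omega_0)}(0)$ are fixed by the three $\tau^0$ coefficients in \eqref{eq: desired expansions}, while the missing initial datum $w^{(\tau_0,\omega_0)}(0)$ is recovered from the linear $\tau$ coefficients via the ODE. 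The numerical values $1$, $-1/4$ and $1/\sqrt 2$ on the right-hand side of \eqref{eq: desired expansions} are then read off from the explicit lowest-order terms of $F_0^{\tilde E_7,G_{max}}$.

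The main obstacle I expect is paragraph three, specifically the careful bookkeeping of the two scaling conventions at play: the rescaling $\tilde t_{33}=\kappa^{2}\tau$ connecting the formal and modular variables, together with the $\kappa$-factors in the definitions of $x^\infty,y^\infty,z^\infty,w^\infty$. Every numerical coefficient in \eqref{eq: desired expansions} depends sensitively on this tracking, and it is also what turns the abstract identification of matrices in paragraph two into the concrete values $\tau_0=\im$ and $\omega_0=\kappa\sqrt{2\pi}/(\Gamma(3/4))^2$ asserted in the lemma.
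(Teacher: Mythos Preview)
Your first two paragraphs match the paper's argument closely: reduce to the degree-$3$ (in $\tilde\bt_+$) coefficients via the preceding lemma, read off the three combinations $C_1$, $C_2$, $x^A+y^A$, and identify $A^{CY/LG}$ with the matrix of Proposition~\ref{prop: action in modular forms}(iii) at $(\tau_0,\omega_0)=(\im,\kappa\Theta)$. That part is fine.

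The gap is in your third paragraph, where you reduce the three functional identities to their Taylor expansions modulo $O(\tau^2)$ by an ODE-uniqueness argument. The claim that ``on the FJRW side the coefficients inherit the same ODE from the WDVV structure'' is not justified: Proposition~\ref{prop: wdvv of SZ} establishes that WDVV is equivalent to the system~\eqref{eq: analytic group action} \emph{for the specific potential} $F_0^{\PP^1_{4,4,2}}$, using its explicit polynomial shape in $x,y,z,w$ (Appendix). You have no such a priori structural description of $F_0^{\tilde E_7,G_{max}}$---that is precisely what the theorem would deliver---so you cannot assert that its $\tilde t_{33}$-dependent coefficients organize into a solution of \eqref{eq: analytic group action}. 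Without that, ODE uniqueness gives you nothing to compare against.

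The paper closes this step differently and more directly: it invokes Lemma~\ref{lemma:reconstruction of FJR}, which says that the entire FJRW primary potential is determined by the $3$-point correlators and the basic $4$-point correlators with exactly one insertion of $\phi_{\rho_1^3\rho_2^3\rho_3}$. In the variable $\tilde t_{33}$ these are exactly the constant and linear terms, so matching the three identities modulo $O(\tau^2)$ forces the full potentials to agree (both being WDVV solutions with the same pairing, unit, and quasihomogeneity). Replacing your ODE paragraph with this appeal to Lemma~\ref{lemma:reconstruction of FJR} makes the argument complete.
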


\begin{proof}
By using the lemma above and reconstruction Lemma~\ref{lemma: reconstruction in genus zero} we see that it's enough to compare the potentials $F_0^{\tilde E_7,G_{max}}$ and $A \cdot F_0^{\PP^1_{4,4,2}}(\tilde \bt)$ up to $O(\tilde t_{i,k}^4, \tilde t_{11})$.
    
Recall part~(iii) of Proposition~\ref{prop: action in modular forms}. Note that for the $\tau_0$ and $\omega_0$ as in the statement of the Lemma, the matrix $A'$ coincides with the matrix $A^{CY/LG}$.
    
The equalities above are obtained by comparing the coefficients of $F_0^{\tilde E_7, G_{max}}$ and $A\cdot F_0^{\PP^1_{4,4,2}}(\tilde \bt)$. The RHS of them are taken from the explicit form of $F_0^{\tilde E_7, G_{max}}$ (recall Section~\ref{subsection: E_7 state space}).
  
It follows from Lemma~\ref{lemma:reconstruction of FJR} that it's enough to check these equalities in order for the whole potentials to coincide.
\end{proof}

In the remainder of this section we show that \eqref{eq: desired expansions} is satisfied by the functions $x^{(\tau_0,\omega_0)}(\tau)$, $y^{(\tau_0,\omega_0)}(\tau)$, $z^{(\tau_0,\omega_0)}(\tau)$ for $\tau_0$ and $\omega_0$ as in Lemma~\ref{l:desired expansions}.
  
Denote by $\tilde x, \tilde y, \tilde z$ the expansion of the function $x$, $y$, $z$ with the change of variables $\tau \to A^{(\tau_0,\omega_0)} \tau$ applied, i.e. 
\[
\tilde x (\tau) := x^\infty \left( \frac{2\im\omega_0^2 \tau_0 {\rm Im}(\tau_0) -\bar{\tau}_0 \tau }{2\im\omega_0^2 {\rm Im}(\tau_0) - \tau} \right),
\]
and similar for $\tilde y$, $\tilde z$. Define the numbers $x_0,x_1$,$y_0,y_1$ and $z_0,z_1$ as the coefficients of the series expansions at $\tau = 0$:

\begin{align*}
\tilde x = x_0 + x_1 \tau + O(\tau^2), \quad \tilde y = y_0 + y_1 \tau + O(\tau^2), \quad \tilde z = z_0 + z_1 \tau + O(\tau^2).
\end{align*}

The functions $x^{(\tau_0,\omega_0)}$, $y^{(\tau_0,\omega_0)}$, $z^{(\tau_0,\omega_0)}$ satisfy
\[
x^{(\tau_0,\omega_0)}(\tau) = \frac{x_0}{\omega_0} + \tau \left( \frac{x_1}{\omega_0} + \frac{x_0}{2 \im \omega_0^3 {\rm Im}(\tau_0)} \right) + O(\tau^2).
\]

To find the coefficients explicitly we use the following derivation formula.

\begin{lemma}
The derivatives of the functions $x$, $y$, $z$ satisfy:
\begin{align*}
      \frac{\partial}{\partial \tau} \tilde x(\tau)\mid_{\tau = 0} & = \frac{\kappa}{2\omega_0^2} \left( \vartheta_3(\tau_0)^2 X_3^\infty(\tau_0) + \vartheta_4(\tau_0)^2 X_4^\infty(\tau_0) \right),
      \\
      \frac{\partial}{\partial \tau} \tilde y(\tau)\mid_{\tau = 0} &= \frac{\kappa}{2\omega_0^2} \left( \vartheta_3(\tau_0)^2 X_3^\infty(\tau_0) - \vartheta_4(\tau_0)^2 X_4^\infty(\tau_0) \right),
      \\
      \frac{\partial}{\partial \tau} \tilde z(\tau)\mid_{\tau = 0} &= \frac{\kappa}{\omega_0^2}  \vartheta_2(\tau_0)^2 X_2^\infty(\tau_0).
\end{align*}
\end{lemma}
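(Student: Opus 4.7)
The plan is to reduce each of the three derivative computations to a chain-rule calculation followed by an application of classical Jacobi theta identities. By construction, $\tilde x(\tau) = x^\infty(\phi(\tau))$, $\tilde y(\tau) = y^\infty(\phi(\tau))$ and $\tilde z(\tau) = z^\infty(\phi(\tau))$, where
\[
\phi(\tau) := \frac{2\im\omega_0^2 \tau_0\,{\rm Im}(\tau_0) - \bar\tau_0\,\tau}{2\im\omega_0^2\,{\rm Im}(\tau_0) - \tau}
\]
is the M\"obius transformation appearing in the definition of $x^{(\tau_0,\omega_0)}$. An elementary computation shows $\phi(0) = \tau_0$ and, after simplification of the quotient $\phi'(\tau) = \tfrac{-4\omega_0^2({\rm Im}(\tau_0))^2}{(2\im\omega_0^2{\rm Im}(\tau_0) - \tau)^2}$, one gets $\phi'(0) = 1/\omega_0^2$. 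By the chain rule, therefore,
\[
\tilde x'(0) = \tfrac{1}{\omega_0^2}(x^\infty)'(\tau_0), \qquad \tilde y'(0) = \tfrac{1}{\omega_0^2}(y^\infty)'(\tau_0), \qquad \tilde z'(0) = \tfrac{1}{\omega_0^2}(z^\infty)'(\tau_0),
\]
so the problem reduces to evaluating the three derivatives $(x^\infty)'(\tau_0)$, $(y^\infty)'(\tau_0)$, $(z^\infty)'(\tau_0)$.

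The second step is to express $x^\infty$, $y^\infty$, $z^\infty$ directly as $\mathbb{Q}$-linear combinations of the squares $\vartheta_2(\tau)^2$, $\vartheta_3(\tau)^2$, $\vartheta_4(\tau)^2$. Starting from the definitions $x^\infty(\tau) = \kappa\,\vartheta_3(q(\tau)^8)^2$, $y^\infty(\tau) = \kappa\,\vartheta_2(q(\tau)^8)^2$ and $z^\infty(\tau) = \kappa\,\vartheta_2(q(\tau)^4)^2$, one iterates the classical Jacobi doubling identities
\[
\vartheta_3(q^2)^2 = \tfrac{1}{2}\bigl(\vartheta_3(q)^2 + \vartheta_4(q)^2\bigr), \quad \vartheta_2(q^2)^2 = \tfrac{1}{2}\bigl(\vartheta_3(q)^2 - \vartheta_4(q)^2\bigr), \quad \vartheta_4(q^2)^2 = \vartheta_3(q)\vartheta_4(q)
\]
the required number of times to absorb the extra $q^4$ and $q^8$ into $q = q(\tau)$. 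The expected outcome is the compact identities
\[
x^\infty(\tau) = \tfrac{\kappa}{2}\bigl(\vartheta_3(\tau)^2 + \vartheta_4(\tau)^2\bigr), \quad y^\infty(\tau) = \tfrac{\kappa}{2}\bigl(\vartheta_3(\tau)^2 - \vartheta_4(\tau)^2\bigr), \quad z^\infty(\tau) = \kappa\,\vartheta_2(\tau)^2.
\]

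Given these identities, the lemma is immediate. Differentiating in $\tau$ and using that $\partial_\tau[\vartheta_p(\tau)^2] = 2\vartheta_p(\tau)\vartheta_p'(\tau) = \vartheta_p(\tau)^2 X_p^\infty(\tau)$ by the very definition $X_p^\infty := 2\partial_\tau\log\vartheta_p$, one gets $(x^\infty)'(\tau) = \tfrac{\kappa}{2}\bigl(\vartheta_3^2 X_3^\infty + \vartheta_4^2 X_4^\infty\bigr)$ and the analogous expressions for $y^\infty$ and $z^\infty$. Substituting into the chain-rule output of the first step produces exactly the three formulas claimed. The main obstacle lies in the second step: the doubling identities must be iterated with the correct scaling of the argument so that the $q^8$ and $q^4$ factors in the definitions of $x, y, z$ are unraveled into $\vartheta_p(\tau) = \vartheta_p(q(\tau))$ rather than $\vartheta_p$ evaluated at some higher power of $q(\tau)$; once the correct identities are in place the rest is a mechanical check.
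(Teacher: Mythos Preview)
Your proposal is correct and follows essentially the same route as the paper's proof: the Jacobi doubling identities are used to rewrite $x^\infty,y^\infty,z^\infty$ as $\tfrac{\kappa}{2}(\vartheta_3^2\pm\vartheta_4^2)$ and $\kappa\,\vartheta_2^2$, then the definition $X_p^\infty=2\partial_\tau\log\vartheta_p$ yields the derivative formulas, and the chain rule supplies the factor $1/\omega_0^2$. The only cosmetic difference is ordering --- you compute $\phi'(0)=1/\omega_0^2$ first, while the paper defers the chain rule to the last sentence --- and you are slightly over-cautious about ``iterating'' the doubling identity: in the paper's conventions a single application takes $q^8$ to $q^4$, and the remaining passage from $\vartheta_p(q^4)$ to $\vartheta_p(\tau)$ is just the identification of $q^4$ with the standard nome, not a further doubling.
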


\begin{proof}
By using the double argument formulae of the Jacobi theta constants we see:
\begin{align*}
      2 x \left( q \right) &= \vartheta_3(q^4)^2 + \vartheta_4(q^4)^2, 
      \\
      2 y \left( q \right) &= \vartheta_3(q^4)^2 - \vartheta_4(q^4)^2,
\end{align*}
and all functions $x(q)$, $y(q)$, $z(q)$ are written via $q^4$. Directly from the definition of $X_k^\infty(\tau)$ and the rescaling we get:

\begin{align*}
      \frac{\partial}{\partial \tau} x^\infty(\tau) & = \frac{\kappa}{2}\left( \vartheta_3(\tau)^2 X_3^\infty(\tau) + \vartheta_4(\tau)^2 X_4^\infty(\tau) \right),
      \\
      \frac{\partial}{\partial \tau} y^\infty(\tau) &= \frac{\kappa}{2}\left( \vartheta_3(\tau)^2 X_3^\infty(\tau) - \vartheta_4(\tau)^2 X_4^\infty(\tau) \right),
      \\
      \frac{\partial}{\partial \tau} z^\infty(\tau) &=  \kappa\vartheta_2(\tau)^2 X_2^\infty(\tau).
\end{align*}

The rest follows from the chain rule and the definition of $\tilde x$, $\tilde y$, $\tilde z$.
\end{proof}

The values of the theta constants and their logrithimic derivatives at the point $\tau = \im$ are known to be:
\begin{align*}
    \vartheta_2 \left(\im\right) = \frac{\pi^{1/4}}{2^{1/4} \Gamma \left( \frac{3}{4} \right)}, \quad \vartheta_3 \left(\im\right) &= \frac{\pi^{1/4}}{\Gamma \left( \frac{3}{4} \right)}, \quad \vartheta_4 \left(\im\right) = \frac{\pi^{1/4}}{2^{1/4} \Gamma \left( \frac{3}{4} \right)},
    \\
    X_2^\infty(\im) = \frac{\im \pi^2}{4 \left( \Gamma \left(\frac{3}{4} \right) \right)^4} + \frac{\im}{2}, \quad X_3^\infty(\im) &= \frac{\im}{2},\quad  X_4^\infty(\im) = - \frac{\im \pi^2}{4 \left( \Gamma\left(\frac{3}{4} \right) \right)^4} + \frac{\im}{2}.
\end{align*}

For $K = \frac{\pi^{1/2}}{2^{1/2} \left(\Gamma \left( \frac{3}{4} \right)\right)^2}$ using the lemma above we get: 
\begin{align*}
    & \kappa^{-1}\tilde x(\tau) = \frac{K}{2} \left(\sqrt{2} + 1\right) + \tau \frac{K \im}{2 \omega_0^2} \left( \frac{\sqrt{2}}{2} + \left( \frac{1}{2} - \frac{K^2\pi}{2} \right)\right)
    + O(\tau^2),
    \\
    & \kappa^{-1}\tilde y(\tau) = \frac{K}{2} \left(\sqrt{2} - 1\right) + \tau \frac{K \im}{2 \omega_0^2} \left( \frac{\sqrt{2}}{2} - \left( \frac{1}{2} - \frac{K^2\pi}{2} \right)\right)
    + O(\tau^2),
    \\
    & \kappa^{-1}\tilde z(\tau) = K  + \tau \frac{K \im}{\omega_0^2}  \left(\frac{1}{2} + \frac{\pi  K^2}{2}\right) + O(\tau^2).
\end{align*}

Hence
\begin{align*}
      \kappa^{-1} &\left( x^{(\im,\omega_0)}(\tau) - y^{(\im,\omega_0)}(\tau) + z^{(\im,\omega_0)}(\tau) \right) 
      \\
      &= \frac{x_0 - y_0 + z_0}{\omega_0} + \tau \left( \frac{x_1 - y_1 + z_1}{\omega_0} + \frac{x_0 - y_0 + z_0}{2 \im \omega_0^3} \right)
      + O(\tau^2)
      \\
      &= 2\frac{K}{\omega_0} + O(\tau^2),
      \\
      \kappa^{-1} &\left( x^{(\im,\omega_0)}(\tau) - y^{(\im,\omega_0)}(\tau) - z^{(\im,\omega_0)}(\tau) \right) 
      \\
      &= \frac{x_0 - y_0 - z_0}{\omega_0} + \tau \left( \frac{x_1 - y_1 - z_1}{\omega_0} + \frac{x_0 - y_0 - z_0}{2 \im \omega_0^3} \right)
      + O(\tau^2)
      \\
      &= - \frac{\tau \cdot \pi \im K^3}{\omega_0^3}       + O(\tau^2),
      \\
      \kappa^{-1} &\left( x^{(\im,\omega_0)}(\tau) + y^{(\im,\omega_0)}(\tau) \right) 
      \\
      &= \frac{x_0 + y_0}{\omega_0} + \tau \left( \frac{x_1 + y_1}{\omega_0} + \frac{x_0 + y_0}{2 \im \omega_0^3} \right)
      + O(\tau^2)
      \\
      &= \sqrt{2} \frac{K}{\omega_0} + O(\tau^2).
\end{align*}

Fixing $\omega_0 = 2 K \kappa$ we get exactly the expansions as in \eqref{eq: desired expansions}. This completes proof of Theorem~\ref{theorem: CYLG}.

\section{Givental's action and CY/LG correspondence}\label{section: GiventalsAction}

In this section we formulate the CY/LG correspondance via the group action on the space of cohomological field theories and give the particular action, connecting $\cF_0^{\PP^1_{4,4,2}}$ and $\cF_0^{(\tilde E_7,G_{max})}$.

\subsection{Inifinitesimal version of Givental's action}
  
In this subsection we introduce Givental's group action on the partition function of a CohFT via the inifinitesimal action computed in \cite{L}. Let $\Lambda_{g,n}$ be a unital CohFT on $(V, \eta)$ with the unit $e_0 \in V$.
    
The \textit{upper-triangular group} consists of all elements $R = \exp(\sum_{l=1} r_l z^l)$, such that
\[
r(z) = \sum_{l \ge 1} r_l z^l \in {\rm Hom}(V,V) \otimes \mathbb C[z],
\] 
and $r(z) + r(-z)^* = 0$ (where the star means dual with respect to $\eta$). Following Givental, we define the \textit{quantization} of $R$:
\[
\hat R := \exp( \sum_{l=1} \widehat{r_l z^l}),
\]
where for $(r_l)^{\alpha,\beta} = (r_l)^\alpha_\sigma \eta^{\sigma, \beta}$ we have:
\begin{equation*}
\begin{aligned}
	\widehat {r_lz^l} := & -(r_l)_1^\alpha \frac{\partial}{\partial t^{l+1,\alpha}} + \sum_{d =0}^\infty t^{d, \beta} (r_l)_\beta^\alpha \frac{\partial}{\partial t^{d+l, \alpha}}
	  \\
	  & + \frac{\hbar}{2} \sum_{i+j=l-1} (-1)^{i+1} (r_l)^{\alpha,\beta} \frac{\partial^2}{\partial t^{i,\alpha} t^{j,\beta}},
\end{aligned}
\end{equation*}

The following theorem is essentially due to Givental.

\begin{theorem}[\cite{G}]
  The differential operator $\hat R$ acts on the space of partition functions of CohFTs.
\end{theorem}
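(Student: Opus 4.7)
The plan is to show that if $\cZ$ is the partition function of some CohFT $\Lambda$ on $(V,\eta)$, then $\hat R\cdot\cZ$ and $\hat S\cdot\cZ$ are also partition functions of CohFTs on $(V,\eta)$. Equivalently, one verifies that the coefficient extraction from $\hat R\cdot\cZ$ or $\hat S\cdot\cZ$ produces genus $g$, $n$-point correlators which arise from a system of classes $\Lambda'_{g,n}\colon V^{\otimes n}\to H^*(\M{g}{n})$ satisfying the $S_n$-invariance, cutting trees, and cutting loops axioms. I would treat the two groups separately, since the geometric content is quite different.

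For the lower-triangular action $\hat S$, I would observe that the quantization formula is, up to the linear in $t$ dilaton shift of the unit direction and a $\hbar^{-1}$-order quadratic form in the $t^{d,\alpha}$, a translation operator on the variables $t^{d,\alpha}\mapsto t^{d,\alpha}+(\text{linear in }s)$. I would interpret this geometrically as a change of calibration of the Frobenius manifold underlying $\Lambda$, i.e.\ a motion of the base point along flat coordinates combined with a change of the fundamental solution of the Dubrovin connection. The symmetry constraint $s(z)+s(-z)^*=0$ is exactly what is needed for the resulting quadratic form to be well defined and for the new system $\Lambda'_{g,n}$ to remain a CohFT on $(V,\eta)$ with the same pairing; the verification of cutting trees/loops for $\Lambda'$ is then formal because shifting the $t$-variables commutes with the node-contraction sums $\eta^{-1}$.

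For the upper-triangular action $\hat R$, I would follow Givental's approach: expand $\hat R=\prod_{l\ge 1}\exp(\widehat{r_lz^l})$ into its three types of terms (the derivative in $t^{l+1,\alpha}$, the variable-coefficient transport term, and the second-order $\hbar\,\partial^2$ term), and show that integrating the resulting flow produces a sum-over-stable-graphs formula. Each edge of the graph receives a decoration by $R(z_+)R(z_-)^*\eta^{-1}$ with $\psi$-class insertions at the two half-edges, while each leaf receives an $R(\psi)$ insertion, and each vertex contributes the correlators of $\Lambda$. I would then check the three CohFT axioms for the graph sum directly: $S_n$-equivariance is manifest, cutting trees/loops follows because restriction of the graph sum to a boundary divisor precisely factors into two graph sums on the smaller factors, the edge matching being guaranteed by the symplectic condition $r(z)+r(-z)^*=0$.

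The main obstacle will be the compatibility of the quadratic differential term $\tfrac{\hbar}{2}\sum(-1)^{i+1}(r_l)^{\alpha,\beta}\partial^2/\partial t^{i,\alpha}\partial t^{j,\beta}$ with the splitting at nodes: its presence is exactly what produces the edge decoration $R(z_+)R(z_-)^*\eta^{-1}$ in the graph sum, and one must show that the alternating signs $(-1)^{i+1}$ together with the constraint $r(z)+r(-z)^*=0$ conspire so that the two $\psi$-classes $z_\pm$ on opposite sides of a node contract correctly under the cutting trees/loops pullback. Once this matching is established, the proof reduces to verifying that an arbitrary CohFT composed with a stable graph decoration remains a CohFT, which is a standard consequence of the cohomological field theory formalism on $\M{g}{n}$.
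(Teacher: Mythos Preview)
The paper does not prove this theorem at all: it is stated as a citation to Givental \cite{G}, and immediately afterward the authors remark that while the $R$--action can be formulated on the CohFT classes themselves, ``it is more subtle for the lower--triangular group, whose generic element acts on the partition function only (cf.~\cite{PPZ}).'' So there is nothing in the paper to compare your argument against.

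That remark, however, points to a genuine gap in your proposal. You set out to show that $\hat S\cdot\cZ$ is the partition function of an honest system of classes $\Lambda'_{g,n}\colon V^{\otimes n}\to H^*(\M{g}{n})$ satisfying the CohFT axioms. This is not true in general: a generic lower--triangular element does not produce cohomology classes on $\M{g}{n}$, only a formal power series with the correct factorization properties at the level of correlators. Your description of the $S$--action as ``a translation operator on the variables'' combined with a quadratic form is right, but the geometric interpretation as a ``motion of the base point along flat coordinates'' only covers a special subgroup (essentially $S$ coming from the calibration of a Frobenius manifold); for a general $s(z)$ there is no underlying $\Lambda'_{g,n}$ to write down, and the verification that ``shifting the $t$-variables commutes with the node-contraction sums'' does not produce classes, only numerical identities among correlators. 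The correct statement is that $\hat S$ preserves the space of formal tau--functions characterized by string, dilaton, and topological recursion relations, not the space of partition functions arising from genuine cohomology classes.

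Your outline for $\hat R$ via the stable--graph expansion is the standard modern argument (as in \cite{FSZ,PPZ}) and is fine; the symplectic condition $r(z)+r(-z)^*=0$ does exactly what you say at the edges.
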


The action of $\hat R$ can be also formulated on the CohFT itself---not just on its partition function (cf. \cite{PPZ}).
We call the action of the differential operator $\hat R$ on the partition function of the CohFT \emph{Givental's $R$-action} or upper-triangular Givental's group action.

The \textit{lower-triangular group} consists of all elements $S = \exp(\sum_{l=1} s_l z^{-l})$, such that 
\[
	s(z) = \sum_{l \ge 1} s_l z^{-l} \in {\rm Hom}(V,V) \otimes \mathbb C[z^{-1}]
\] 
and $s(z) + s(-z)^* = 0$. Following Givental, we define \textit{the quantization} of $S$:
\[
\hat S := \exp( \sum_{l=1}^\infty (s_lz^{-l})\hat{\ } ),
\]
where
\begin{align*}
	\sum_{l=1}^\infty &(s_lz^{-l})\hat{\ } =  -(s_1)_1^\alpha \frac{\partial}{\partial t^{0,\alpha}}
	+ \frac{1}{\hbar} \sum_{d=0}^{\infty} (s_{d+2})_{1,\alpha} \, t^{d,\alpha}
	\\ 
	& + \sum_{ \substack{d=0\\ l=1} }^\infty
	(s_l)_\beta^\alpha \, t^{d+l,\beta} \frac{\partial}{\partial t^{d,\alpha}}
	+ \frac{1}{2 \hbar} \sum_{ \substack{d_1,d_2 \\ \alpha_1,\alpha_2} }
	(-1)^{d_1} (s_{d_1+d_2+1})_{\alpha_1,\alpha_2} \, t^{d_1,\alpha_1} t^{d_2,\alpha_2}.\notag
\end{align*} 

In contrast to $\hat R$, the action of the differential operator $\hat S$ generally\footnote{See for example \cite[Section~1]{PPZ} } can't be extended to the action on the space of CohFTs. Moreover, it could happen that $\hat S \cdot \cZ^\Lambda$ is not anymore a partition function in our definition\footnote{One can consider $\hat S$ as acting on the space of genus zero potentials, if one treats the latter one as a space of functions, subject to Dilaton, String and TRR--0 equation.}. However for the examples of this paper $\hat S\cdot \cZ^\Lambda$ is still a partition function. In general such $\hat S$ are mostly used to perform linear change of the variables, however they can also affect $1$--point and $2$--point correlators.

We call $\hat S: \cZ^\Lambda \to \hat S \cdot \cZ^\Lambda$ the \emph{lower-triangular} Givental's group action. 

\subsection{$R$-matrix of a CohFT}\label{ss:gw theory of point}
Fix a unital CohFT $\Lambda$ on $(V,\eta)$ with unit $e_0$ and $m+1=\dim V$. Let $\bt = (t^0,\dots,t^m)$ with $t^\alpha := t^{0,\alpha}$ as in Section~\ref{ss:cohft axioms}. 
For each choice of indices $i,j,k \in 0,\dots,m$ define:
\[
  c_{ij}^k(\bt) := \sum_{p=0}^m\frac{\partial^3 F_0}{\partial t^i \partial t^j \partial t^p} \eta^{pk}.
\]
Because $F_0$ is a solution to WDVV equation (see Section~\ref{subsection: reconstruction in genus zero}) the functions $c_{ij}^k(\bt)$ are structure constants of an associative and commutative algebra for all $\bt$. Denoting the basis of this algebra by $\langle\partial/\partial t^0,\dots,\partial /\partial t^m\rangle$, the product $\circ$ reads:
\[
  \frac{\partial}{\partial t^i} \circ  \frac{\partial}{\partial t^j} = \sum_{k=0}^m c_{ij}^k \frac{\partial}{\partial t^k}.
\]
Moreover this algebra turns out to be a Frobenius algebra with respect to the pairing $\eta$.

The CohFT $\Lambda$ is called \textit{semisimple} if the algebra defined by $c_{ij}^k(0)$ above is semisimple. In that case there are new coordinates $u^0(\bt),\dots,u^m(\bt)$, such that $\partial/\partial_{u_i} \circ \partial/\partial_{u_j} = \delta_{i,j}\Delta_i^{-1} \partial/\partial_{u_i}$ for some functions $\Delta_i = \Delta_i(\bu)$. Let $\Psi$ be the transformation matrix from the frame $\langle \partial/\partial t^0, \dots, \partial/\partial t^m \rangle$ to the frame $\langle \partial/\partial u^0, \dots, \partial/\partial u^m \rangle$.

Consider the partition function $\cZ^{pt}$ of the GW theory of a point.
This is a partition function of a CohFT on a one-dimensional space, and can be therefore written in coordinates $\{u^{\ell,0}\}_{\ell \ge 0}$.
In the next formula take the product of $m+1$ such partition functions indexing however the variables.
\begin{equation}\label{eq: tauOfKdv}
  \mathcal{T}^{(m+1)} = \prod_{k=0}^m \cZ^{pt}\left(\{u^{\ell,k}\}_{\ell \ge 0} \right).
\end{equation}
Consider also
\[
  \hat \Delta \cdot \mathcal{T}^{(m+1)} := \prod_{k=0}^m \cZ^{pt}\left(\{u^{\ell,k}\}_{\ell \ge 0} \right) \mid_{u^{\ell,k} \to  \Delta_i^{1/2} v^{\ell,k}, \ \hbar \to \Delta_i \hbar  }
\]

The following theorem was conjectured by Givental and later proved by Teleman.

\begin{theorem}[Theorem~1 in \cite{T}]\label{theorem: teleman}
For every quasihomogenous semisimple unital CohFT $\Lambda$ on an $m+1$-dimensional vector space $V$ there is unique upper-triangular group element $R$, such that
\[
\cZ^\Lambda = \hat R \cdot \hat \Psi \cdot \hat \Delta \cdot \mathcal{T}^{(m+1)},
\]
where $\hat \Psi$ acts by the change of the variables $v^{d,\alpha} = \Psi_\beta^\alpha t^{d,\beta}$.
\end{theorem}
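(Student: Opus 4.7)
The plan is to follow the Givental--Teleman framework in four steps: produce $\Psi$ from semisimplicity, produce $S$ from the genus zero calibration, produce $R$ from the asymptotic expansion of the Dubrovin connection at $z=0$, and finally lift the identity from the small phase space / genus zero to all higher genera. First I would use semisimplicity: at a generic point $\bt$ the Frobenius algebra with structure constants $c_{ij}^k(\bt)$ splits as a product of $m+1$ copies of $\CC$, giving canonical idempotents $\pi_0,\dots,\pi_m$. Normalising them by $\sqrt{\Delta_i}$ in the pairing $\eta$ yields an orthonormal frame, and the base change from $\{\partial/\partial t^a\}$ to this frame defines $\Psi(\bt)$. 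By construction $\hat\Psi \cdot \mathcal{T}^{(m+1)}$ already realises the decoupled semisimple Frobenius structure at the chosen basepoint.

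Second I would determine the lower--triangular $S$ from the genus zero primary potential. The Dubrovin connection $\nabla_z = d - z^{-1}\sum_\alpha c_\alpha\, dt^\alpha$ admits a fundamental matrix solution of the form $S(z) = 1 + s_1 z^{-1} + s_2 z^{-2} + \cdots$ which is symplectic, $s(z) + s(-z)^{*} = 0$, and the condition $S(z)\mathbf{1}$ is the $J$--function of $\Lambda$ fixes all $s_\ell$ uniquely. A direct inspection of Givental's quantisation shows that $\hat S$ is exactly the operator passing from ancestor to descendant correlators. Third, in the canonical coordinates produced in step one, the Dubrovin connection has a unique formal asymptotic solution $R(z) = 1 + r_1 z + r_2 z^2 + \cdots$ which is symplectic, and quasihomogeneity pins down the integration constants at each order through the Euler vector field; this yields the upper--triangular element $R$.

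The genuine obstacle is step four: showing that the same $R$ and $S$ reproduce all higher genus correlators of $\Lambda$, not only the genus zero data that constructed them. This is the heart of Teleman's theorem. The strategy would be to prove that the space of semisimple CohFTs with fixed germ of Frobenius structure is a torsor under the upper--triangular Givental group, so that uniqueness forces the ancestor CohFT $\hat R \cdot \hat\Psi \cdot \mathcal{T}^{(m+1)}$ to coincide with the ancestor truncation of $\Lambda$. Executing this requires the identification of the tautological subring of $H^{*}(\overline{\cM}_{g,n})$ as large enough to determine a semisimple CohFT (Mumford's formula for the Hodge class together with Madsen--Weiss type results on the stable cohomology), a graph--sum expansion identifying the infinitesimal $\hat r_\ell$--action with insertions of $\kappa$-- and $\psi$--classes at vertices and edges of stable graphs, and a careful compatibility check between the symplectic condition $r(z) + r(-z)^{*} = 0$ and the gluing axiom along nodes. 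Uniqueness of $R$ and $S$ then follows immediately from the torsor property, while existence has been produced constructively in steps two and three.
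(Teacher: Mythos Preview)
The paper does not give its own proof of this statement: it is quoted verbatim as Theorem~1 of \cite{T} and used as a black box. So there is nothing in the paper to compare your argument against beyond the citation itself.

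As for the content of your sketch: the first three steps (idempotents and $\Psi$, calibration $S$ from the $J$--function, formal $R$ from the asymptotic solution of the Dubrovin connection with the Euler field fixing constants) are the standard Givental construction and are fine at the level of an outline. You correctly identify that the actual theorem is step four, and your description of the ingredients --- Mumford's formula, Madsen--Weiss stable cohomology, the graph--sum interpretation of $\hat r_\ell$, the torsor picture --- matches the architecture of Teleman's proof. But what you have written is a table of contents, not a proof: you state that one must ``identify the tautological subring as large enough'' and perform ``a careful compatibility check'' without carrying out either, and the torsor statement (which is the crux) is asserted rather than argued. In particular, the step from ``the Givental group acts on semisimple CohFTs'' to ``it acts simply transitively on those with a fixed genus-zero germ'' is exactly where Teleman's classification of 2D semisimple field theories via Morse theory on $B\mathrm{Diff}$ and the Madsen--Weiss theorem enters, and that cannot be bypassed by a compatibility check. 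If this is meant to stand as a proof rather than an attribution, that gap is fatal; if it is meant as an attribution, then it should simply cite \cite{T} as the paper does.
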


It often happens that although the structure constants $c_{ij}^k(0)$ do not define a semisimple algebra, there is $\bt_0$, s.t. for $\bt = \bt_0$ the semisimplicity condition holds. In this case one can consider the $S$--action, acting on $\cZ^\Lambda$ by just the change of the variables $\bt \to \bt - \bt_0$, allowing one to apply Theorem~\ref{theorem: teleman} to $\hat S\cdot \cZ^\Lambda$. 

Because $\hat \Psi$ and $\hat \Delta$ only apply the change of the variables, the most important part of the formula above is located in the action or $R$. This motivates the following definition.

\begin{definition}
The upper-triangular group element $R$ as above is called the \textit{R-matrix} of the CohFT $\Lambda$.
\end{definition}

In order to find such an $R$-matrix explicitly one would normally use the recursive procedure described by Givental. After writing $R = \mathrm{Id} + \sum_{k \ge 1} R_kz^k$ every matrix $R_k$ is uniquely determined by the preceding matrices. However, it is difficult to perform this procedure to the end to have a closed formula for $R = R(z)$. Up to now the only explicitly written $R$-matrix is for the theory of 3-spin curves, which is two-dimensional (cf. \cite{PPZ}).

Furthermore, it could still happen that the formula of the theorem above holds for a non-semisimple CohFT. In this case one doesn't know if the $R$-matrix is unique and the recursive procedure above can no longer be applied.
We will return to this question in Section~\ref{section: the last one}, where we present a formula similar to the $R$-matrix for the Gromov--Witten theory of $\PP^1_{4,4,2}$ without the use of the recursive procedure described above.

\subsection{Mirror symmetry and CY/LG correspondence}
The CY/LG correspondence is best understood using mirror symmetry via the B model. 

Given a hypersurface singularity $\tilde W: \CC^N \to \CC$ one can construct the so-called Saito--Givental CohFT, which depends non-trivially on the certain special choice of $\zeta$ a \emph{primitive form} of Saito. Let $\cZ_{\tilde W,\zeta}$ be the partition function of this CohFT. 

CY--LG mirror symmetry conjectures that the partition function $\cZ_{\tilde W,\zeta_\infty}$ with the special choice of the primitive form $\zeta = \zeta_\infty$ coincides with the partition function of the GW theory of some Calabi--Yau variety $\cX$ up to probably a linear change of the variables.

LG--LG mirror symmetry conjectures that the partition function $\cZ_{\tilde W,\zeta_0}$ with the another special choice of the primitive form $\zeta = \zeta_0$ coincides with the partition function of the FJRW theory of some pair $(W,G_{max})$ up to probably a linear change of the variables, where $W: \CC^N \to \CC$ is some other hypersurface singularity (generally different from $\tilde W$).

One says than that the GW theory of $\cX$ and FRJW theory of $(W,G_{max})$ constitute two mirror A models 
of the one B model of $\tilde W$, taken in the different phases---$\zeta_\infty$ and $\zeta_0$. This lead to the following conjecture.

\begin{conjecture}\label{conj:Amodel Rmatrix}
Let GW theory of $\cX$ and FJRW theory of $(W,G_{max})$ be two mirror A models of the same B model. Then there is an upper-triangular Givental's action $R=R(z)$, such that
\[
\hat R \cdot \cZ^{\cX} (\bt) = \cZ^{(W,G_{max})} (\tilde\bt(\bt)),
\]
where $\tilde \bt = \tilde \bt (\bt)$ is a linear change of the variables.
\end{conjecture}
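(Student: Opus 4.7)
The natural approach is to reduce the higher-genus equality to the genus-zero equality (which, for the specific pair $(\tilde E_7, G_{max})$ and $\PP^1_{4,4,2}$ treated in this paper, is already established in Theorem~\ref{theorem: CYLG}) by invoking Teleman's reconstruction theorem. My plan has three steps: (i) establish that both sides are semisimple CohFTs at a generic point of their Frobenius manifolds, (ii) construct the genus-zero CY/LG data as a gauge equivalence between the two Dubrovin connections and read off the upper-triangular element $R$ from it, and (iii) apply Teleman's uniqueness to propagate the identity to all genera.

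For step (i), semisimplicity of the quantum cohomology of elliptic orbifolds at a generic point of the small quantum locus is standard; semisimplicity of FJRW theory of $(\tilde E_7, G_{max})$ at a generic value of $\tilde t_{33}$ follows from the modular description of Section~\ref{section: CYLG via modularity}, since the Jacobian algebra of $\tilde E_7$ is generically semisimple and the CY/LG isomorphism transports this property. For step (ii), the $\SL(2,\CC)$-action $A^{CY/LG}$ must be interpreted not just at the level of primary potentials but as an action on the full flat F-manifold structure. One lifts $A^{CY/LG}$ to an action on canonical coordinates $\{u^i(\bt)\}$ and on the normalization matrix $\Psi$, and observes that the linear change of variables $\tilde\bt(\bt)$ absorbs the lower-triangular (dilaton/translation) directions, leaving only an upper-triangular descendant correction. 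The explicit $R$-matrix can then be computed as the asymptotic expansion at $z \to \infty$ of the gauge transformation relating the two Dubrovin connections along the flat coordinates.

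Given (i) and (ii), step (iii) is automatic: by Theorem~\ref{theorem: teleman}, both $\hat R \cdot \cZ^{\PP^1_{4,4,2}}$ and $\cZ^{(\tilde E_7, G_{max})}$ (after the change of variables $\tilde \bt(\bt)$) admit unique decompositions $\hat S \cdot \hat R' \cdot \hat\Psi \cdot \mathcal{T}^{(m+1)}$, and matching of the underlying Frobenius manifolds—which is exactly the content of the genus-zero CY/LG correspondence—forces equality of all three factors, hence of the two partition functions.

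The main obstacle is step (ii): translating the modular $\SL(2,\CC)$-action into an element of Givental's upper-triangular loop group. The remark following Proposition~\ref{prop: action in modular forms} suggests the right interpretation—$A^{CY/LG}$ is a change of primitive form on the B-side, which on the A-side corresponds to the Cayley transform of Shen--Zhou—but extracting from this a closed formula for $R(z)$ requires a careful asymptotic analysis of the period map and its behavior under the change of primitive form. A secondary subtlety is the non-equivariant limit mentioned in the introduction, which indicates that the cleanest formulation of the conjecture in this setting may require enlarging the state space before passing to a limit; controlling this enlargement, together with the analytic continuation of $R(z)$ away from the generic semisimple locus, is where the genuine technical difficulty lies.
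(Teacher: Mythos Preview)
The statement you are attempting to prove is explicitly labeled a \emph{Conjecture} in the paper and is not proved there. The paper's actual contribution in this direction is Theorem~\ref{theorem: cylg via Givental}, which is a genus-zero statement only, and which crucially involves both an upper-triangular element $R^\sigma$ \emph{and} lower-triangular data $\hat S_0^c \cdot \hat S^\tau$. So there is no ``paper's own proof'' to compare against; you are attempting to prove an open statement.

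On the merits of your sketch, the principal gap is in step~(ii). You assert that ``the linear change of variables $\tilde\bt(\bt)$ absorbs the lower-triangular (dilaton/translation) directions, leaving only an upper-triangular descendant correction.'' But Theorem~\ref{theorem: cylg via Givental} shows precisely the opposite in the case at hand: the element $S^\tau$ is a genuine lower-triangular action implementing a shift $t_{-1} \mapsto t_{-1} + \tau$ (an affine, not linear, transformation), and $S_0^c$ rescales both the variables and $\hbar$. Neither of these is a linear change of variables on the state space in the sense the conjecture allows. So either the conjecture as literally stated needs to be amended to permit an $S$-action (as the introduction of the paper itself suggests, writing $\cZ^{FJRW} = \hat S \cdot \hat R \cdot \cZ^{GW}$), or you must produce a separate argument that the $S$-part can be eliminated---and you have not done so.

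Your step~(iii) via Teleman is also more delicate than you indicate. Teleman's theorem gives uniqueness of the $R$-matrix for a \emph{homogeneous} semisimple CohFT, but an arbitrary upper-triangular action $\hat R$ does not preserve homogeneity; applying $\hat R$ to $\cZ^{\PP^1_{4,4,2}}$ need not yield a homogeneous theory, so you cannot directly invoke Theorem~\ref{theorem: teleman} on $\hat R \cdot \cZ^{\PP^1_{4,4,2}}$. The correct route would be to show that the genus-zero Frobenius manifolds agree (after the full $R$-and-$S$ action and change of variables), then invoke Teleman on the two original homogeneous theories separately to conclude that their full partition functions agree---but this again requires tracking the $S$-action, which brings you back to the gap above.
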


When two mirror symmetry conjectures of type CY--LG and type LG--LG hold, Conjecture~\ref{conj:Amodel Rmatrix} is an A side analogue of the following B side conjecture: 

\begin{conjecture}there is an upper-triangular group element of Givental $R=R(z)$, such that up to a linear change of variables the following equation holds:
\[
\hat R \cdot \cZ_{\tilde W,\zeta_\infty} = \cZ_{\tilde W,\zeta_0}.
\] 
\end{conjecture}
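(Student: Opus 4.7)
The plan is to deduce this B--side conjecture from the A--side conjecture (Conjecture~\ref{conj:Amodel Rmatrix}) by transporting the $R$--matrix through the two mirror symmetry isomorphisms. Concretely, assume that both mirror symmetry statements stated just above hold: a linear change of variables $\bt \mapsto \bt_\infty(\bt)$ identifies $\cZ_{\tilde W, \zeta_\infty}$ with $\cZ^{\cX}(\bt_\infty)$, and a linear change of variables $\bt \mapsto \bt_0(\bt)$ identifies $\cZ_{\tilde W, \zeta_0}$ with $\cZ^{(W,G_{max})}(\bt_0)$. Given an upper--triangular $R$--matrix $R^A(z)$ satisfying Conjecture~\ref{conj:Amodel Rmatrix} on the A--side, one obtains the desired $R^B(z)$ on the B--side by pre-- and post--composing $R^A(z)$ with the matrices of these two linear isomorphisms; the composite change of variables $\bt_\infty^{-1} \circ \tilde\bt \circ \bt_0$ is the ``linear change of variables'' appearing in the statement.

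For this reduction to actually give a proof rather than a formal rewriting, the first step I would carry out is to verify that the quantization of Givental respects pushing an $R$--action across a linear change of variables (including the $\hbar$--contribution in the quadratic differential operator). This is essentially a compatibility check between the lower--triangular and upper--triangular formalisms: a linear change of coordinates on $V$ can be absorbed into a constant $\Psi$--action, which commutes with the $\hat R$--action up to an explicit rescaling of the components of $R$. I would first treat this carefully in the semisimple range, where Theorem~\ref{theorem: teleman} gives uniqueness, and then extend by continuity.

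Next, in the specific setting of the paper ($\tilde W = \tilde E_7$, $\cX = \PP^1_{4,4,2}$, $(W,G_{max})$ the FJRW pair) I would assemble the ingredients that are already available: LG/LG mirror symmetry for simple elliptic singularities (via Krawitz--Shen and Milanov--Ruan), CY/LG mirror symmetry for $\PP^1_{4,4,2}$ expressed through the Saito primitive form $\zeta_\infty$, and the A--side $R$--matrix that the subsequent sections of this paper construct out of the Cayley transform $A^{CY/LG}$ and the Givental action of Section~\ref{section: GiventalsAction}. The remark after Proposition~\ref{prop: action in modular forms} already signals that the $\SL(2,\CC)$--action interchanging $\zeta_0$ and $\zeta_\infty$ on the B--side is exactly the coordinate form of the Cayley transform, so matching $R^A$ with $R^B$ reduces to the compatibility of the first paragraph.

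The main obstacle is not the formal reduction but the precise form of ``mirror symmetry'' one is allowed to invoke. All--genus LG/LG and CY/LG mirror symmetry statements are known only in special cases, and extending them to an identification strong enough to transport a Givental upper--triangular action requires knowing that the mirror isomorphism intertwines not just the pairings and small--phase--space potentials but also the ancestor/descendent insertions (i.e. the $\psi$--classes) at all genera. Equivalently, one must show that the change of primitive form $\zeta_\infty \leftrightsquigarrow \zeta_0$ is implemented by a genuine upper--triangular Givental element acting on the Saito--Givental partition function, not merely a reparametrization of the Frobenius manifold. Establishing this last compatibility --- presumably via the uniqueness in Theorem~\ref{theorem: teleman} applied on the generic semisimple locus and then analytically continued to the origin, using the holomorphicity statement proved in Theorem~\ref{theorem: CYLG} --- is where the real work lies.
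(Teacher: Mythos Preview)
The statement you are trying to prove is a \emph{conjecture} in the paper; the paper does not offer any proof of it. It appears only as the B--side counterpart to Conjecture~\ref{conj:Amodel Rmatrix}, prefaced by the remark that the two become equivalent ``when two mirror symmetry conjectures of type CY--LG and type LG--LG hold.'' So there is nothing in the paper to compare your argument against.

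What you have written is not a proof but a correct unpacking of that very remark: assuming both mirror identifications at the level of full partition functions, one can conjugate an A--side $R$--matrix by the linear mirror maps to obtain a B--side one. You yourself flag the actual gap: the mirror symmetry statements you need are all--genus identifications of partition functions with descendants, and these are not known in the generality required. Invoking Teleman uniqueness on the semisimple locus does not close this gap, because the B--side Saito--Givental CohFT for $\tilde W$ with primitive form $\zeta_0$ (and likewise $\zeta_\infty$) is \emph{not} semisimple at the relevant point---this is exactly why the paper has to take the non--equivariant limit route rather than apply Theorem~\ref{theorem: teleman} directly, and why the conjecture remains a conjecture. Your ``analytic continuation from the generic semisimple locus'' step would require a continuity argument for the $R$--matrix across the non--semisimple locus that is not available in general.

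In short: your proposal correctly identifies the formal reduction the paper alludes to, and correctly locates the obstruction; but it does not overcome that obstruction, and neither does the paper.
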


In the case of simple elliptic singularities this sort of action was investigated in \cite{MR,BT,B1}. In particular, it was shown in \cite{B1} that the $\SL(2,\CC)$-action of Section~\ref{section: CYLG via modularity} has at the same time the meaning of the primitive form change on the B side and can be written via the certain $R$-action of Givental. In other words, \eqref{e:A Action} can be realized as the restriction of the certain action of Givental to the small phase space. 

\subsection{CY/LG correspondence via Givental's action}
For any $\tau, \sigma\in \CC$ consider the lower-triangular group element 
\[
    S^\tau(z) = 
    \exp \left( \Bigg(
    \begin{array}{c c c}
      0 & \dots & 0 \\
      \vdots & 0 & \vdots \\
      \tau & \dots & 0
    \end{array}\Bigg)
    z^{-1}\right),
\]
and the upper-triangular group element $R^\sigma$:
\[
    R^\sigma(z) = 
    \exp \left( \Bigg(
    \begin{array}{c c c}
      0 & \dots & \sigma \\
      \vdots & 0 & \vdots \\
      0 & \dots & 0
    \end{array} \Bigg)
    z\right) .
\]

For any $c\in \CC$, we also define the matrix 
\[
S_0^c := 
   \left(
     \begin{array}{c c c}
       1 & \dots & 0 \\
       \vdots & c \cdot I_{n-2} & \vdots \\
       0 & \dots & c^2
     \end{array}
   \right)
\]
together with an action of $S_0^c$ on $\cZ(\bt)$ (which we denote by $\hat{S}_0^c$) defined by 
\[
t^{\ell,\alpha} \to (S_0^c)_\beta^\alpha \ t^{\ell,\beta} \quad \text{and} \quad \hbar \to c^2 \hbar.
\]

Let $\Theta = \frac{\sqrt{2\pi}}{\left(\Gamma \left( \frac{3}{4} \right)\right)^2}$ as in Theorem~\ref{theorem: CYLG}, define:
\[
  \tau := -\frac{\pi}{2}, \quad \sigma := -\frac{1}{\pi\Theta^2}, \quad c:= \frac{1}{\Theta}.
\]

We give now the Givental's action form of the CY/LG correspondence in genus zero.

\begin{theorem}\label{theorem: cylg via Givental}
Consider the partition functions $\cZ^{(\tilde E_7,G_{max})}$ and $\cZ^{\PP^1_{4,4,2}}$. We have:
\[
\cF_0^{(\tilde{E}_7,G_{max})} = \res_\hbar \ln \left( \hat R^\sigma \cdot \hat S_0^c \cdot \hat S^\tau \cdot \cZ^{\PP^1_{4,4,2}} \right)
\]
with the Givental's element $S^{\tau}$, $S_0^c$ and $R^\sigma$ defined above.
\end{theorem}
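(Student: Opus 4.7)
The plan is to reduce the claim to Theorem~\ref{theorem: CYLG} by showing that the composite Givental operator $\hat R^\sigma \cdot \hat S_0^c \cdot \hat S^\tau$, applied to $\cZ^{\PP^1_{4,4,2}}$, realizes at the level of the genus zero small--phase--space potential exactly the $\SL(2,\CC)$--action of $A^{CY/LG}$ appearing in Theorem~\ref{theorem: CYLG}.

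First I would give a Bruhat--type factorization of $A^{CY/LG}$. A direct computation yields
\[
A^{CY/LG} = \begin{pmatrix} 1 & -\tfrac{\pi}{2} \\ 0 & 1 \end{pmatrix} \begin{pmatrix} \tfrac{1}{\Theta} & 0 \\ 0 & \Theta \end{pmatrix} \begin{pmatrix} 1 & 0 \\ \tfrac{1}{\pi\Theta^2} & 1 \end{pmatrix},
\]
decomposing $A^{CY/LG}$ as an upper--triangular, diagonal and lower--triangular piece. Up to sign conventions coming from Givental's quantization, the three factors match the parameters $\tau=-\pi/2$, $c=1/\Theta$, $\sigma=-1/(\pi\Theta^2)$ of the theorem. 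The outer two factors have the same single--off--diagonal--entry shape as the matrices whose exponentials define $S^\tau(z)$ and $R^\sigma(z)$, while the middle factor is precisely the rescaling described by $S_0^c$.

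Next, using the infinitesimal quantization formulas of Section~\ref{section: GiventalsAction}, I would verify that each of $\hat S^\tau$, $\hat S_0^c$, $\hat R^\sigma$, when applied to $\cZ^{\PP^1_{4,4,2}}$ and then restricted to the genus zero small--phase--space potential, acts exactly as the corresponding factor of $A^{CY/LG}$ via formula \eqref{e:A Action}. The shift of $t_{-1}$, the rescalings of the middle--degree variables by $c$ and of $t_{-1}$ by $c^2$, and the Möbius--type nonlinearity on the non--unit part $H(t_1,\dots,t_{m-1},t_m)$ all arise from carefully assembling the exponentiated infinitesimal Givental actions, using the string and dilaton equations for $\cZ^{\PP^1_{4,4,2}}$. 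This is the main computational obstacle: the Givental generators contain mixed primary--descendant coupling terms as well as quadratic $\hbar^{\pm 1}$--weighted terms, and one must show that after $\res_\hbar \ln$ and restriction to the small phase space they combine into the Möbius nonlinearity $c(\sum_{\alpha\le\beta}\eta_{\alpha\beta}t^\alpha t^\beta)^2/(2(ct_m+d))$ and the argument shifts of \eqref{e:A Action}, with the precise signs. Fortunately, the correspondence between the $\SL(2,\CC)$--action and Givental's formalism for elliptic orbifold CohFTs was established in \cite{B1}, so once Step~1 identifies the parameters, Step~2 reduces to invoking that identification.

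With this matching in hand, the restriction of $\res_\hbar \ln (\hat R^\sigma \hat S_0^c \hat S^\tau \cZ^{\PP^1_{4,4,2}})$ to the small phase space equals $A^{CY/LG} \cdot F_0^{\PP^1_{4,4,2}}$, and by Theorem~\ref{theorem: CYLG} equals $F_0^{(\tilde E_7,G_{max})}$ after the linear change $\tilde\bt = \tilde\bt(\bt)$. Since the genus zero big--phase--space potential $\cF_0$ of a quasihomogeneous CohFT is reconstructed from its small--phase--space restriction $F_0$ via the genus zero topological recursion relations (the same machinery underlying Lemma~\ref{lemma: reconstruction in genus zero}), and since the Givental action preserves the CohFT structure so that these recursion relations continue to hold on both sides, the equality propagates from the small phase space to all descendants, yielding the stated identity of $\cF_0$'s.
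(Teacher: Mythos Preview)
Your proposal is correct and follows essentially the same route as the paper's own proof: reduce to the small phase space by invoking the identification from \cite{B1} between the $\SL(2,\CC)$--action \eqref{e:A Action} and the composite Givental action $\hat R^\sigma \hat S_0^c \hat S^\tau$, then apply Theorem~\ref{theorem: CYLG}, and finally extend from $F_0$ to $\cF_0$ via the genus zero topological recursion relations. Your Bruhat factorization of $A^{CY/LG}$ makes the parameter matching $(\tau,c,\sigma)=(-\pi/2,\,1/\Theta,\,-1/(\pi\Theta^2))$ more transparent than the paper's bare citation of \cite{B1}, though as you yourself note the actual verification of Step~2 (including the sign) is still delegated to \cite{B1} rather than carried out from the quantization formulas.
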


\begin{proof}
One can check (cf. \cite{B1}) that the action of the theorem induces the action of Theorem~\ref{theorem: CYLG} on the primary genus 0 potentials. And in Theorem~\ref{theorem: CYLG} and Corollary~5.1 in \cite{B1} we see that the theorem holds for the primary potentials. 
We only have to take care of the psi-classes insertions.
  
However in genus zero all correlators are unambiguesly reconstructed from the small phase space correlators by using the topological recursion relation. Hence we can reconstruct these correlators on the LHS from the small phase space.
\end{proof}

\section{Extended FJRW correlators}\label{section: twisted theory}
In this section, we reformulate FJRW theory in order to obtain the genus zero potential $\cF_0^{(\tilde{E}_7,G_{max})}$ from a basic CohFT. This method is also used in \cite{ChR}, \cite{ChIR}, \cite{PrSh}, so we will be brief. The details can be found in these other articles. In this section and the section following, we fix $W=x^4+y^4+z^2$, and $G=G_{max}$.   
\subsection{$r$-spin theory}
Let $(A_r)_{g,n}$ denote the moduli space of genus $g$, $n$-marked $A_r$-curves corresponding to the polynomial $A_r=x^{r+1}$. Such $W$-structures are often referred to as $r$-spin curves.  Let $(A_W)_{g, n}$ denote the fiber product
\[
(A_W)_{g, n}:=
(A_3)_{g,n}\times_{\cM_{g,n,4}}(A_3)_{g,n}\times_{\cM_{g,n,4}}(A_1)_{g,n}
\]

\begin{proposition}[\cite{ChR}]There is a surjective map
\[
s:(A_W)_{g, n}\to \cW_{g,n}
\]
which is a bijection at the level of a point. 
\end{proposition}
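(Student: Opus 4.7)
The key observation is that $W = x^4+y^4+z^2$ is a direct sum of Fermat monomials, so the data of a $W$--structure factors into three independent pieces, one per variable. I would define $s$ by sending a geometric point of $(A_W)_{g,n}$, consisting of a $4$--stable curve $\cC$ together with two $A_3$--structures $(\cL_1,\phi_1)$, $(\cL_2,\phi_2)$ and an $A_1$--structure $(\cL_3,\psi)$ where $\phi_i:\cL_i^{\otimes 4}\to\omega_{\log}$ and $\psi:\cL_3^{\otimes 2}\to\omega_{\log}$, to the $W$--structure $(\cC,\cL_1,\cL_2,\cL_3,\phi_1,\phi_2,\psi^{\otimes 2})$. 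The required $W$--structure isomorphism $\varphi_3:\cL_3^{\otimes 4}\to\omega_{\log}^{\otimes 2}$ (note the exponent $w_3=2$) is supplied here by the tensor square of $\psi$, and the abstract conditions $W_i(\cL_\bullet)\cong\omega_{\log}$ are automatic from the $A_{r_i}$--isomorphisms.

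For surjectivity on geometric points, given a $W$--structure $(\cC,\cL_1,\cL_2,\cL_3,\varphi_1,\varphi_2,\varphi_3)$, the pairs $(\cL_1,\varphi_1)$ and $(\cL_2,\varphi_2)$ serve as the two $A_3$--structures directly. The only nontrivial step is extracting $\psi$ with $\psi^{\otimes 2}=\varphi_3$. By the $W$--structure condition there exists some $\tilde\psi:\cL_3^{\otimes 2}\xrightarrow{\sim}\omega_{\log}$; then $\tilde\psi^{\otimes 2}$ and $\varphi_3$ differ by multiplication by a locally constant scalar $c\in\CC^\times$, and rescaling $\psi:=\sqrt{c}\,\tilde\psi$ (with a branch chosen on each component) produces the required square root.

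For injectivity at the level of points, two isomorphisms $\psi,\psi'$ with $\psi^{\otimes 2}=(\psi')^{\otimes 2}$ differ by $\pm 1$ on each connected component; the scalar $-1$ acts on $\cL_3$ as an isomorphism of $A_1$--structures sending $\psi$ to $-\psi$ while leaving $\psi^{\otimes 2}$ invariant, so two such preimages define isomorphic $A_W$--structures. The main technical point to be verified carefully is compatibility with the orbifold structure at markings and balanced nodes: one must check that the square--root construction respects the representation of the local isotropy group $\ZZ_4$ on $\cL_3$ (which acts through its quotient $\ZZ_2$ on the $A_1$--structure), and that the fiber product being formed over $\cM_{g,n,4}$ forces all three line bundles to live on the same $4$--stable curve with consistent isotropy data. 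This local verification is precisely what underlies the analogous statement in \cite{ChR}, whose argument I would follow.
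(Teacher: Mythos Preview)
The paper does not supply its own proof of this proposition; it is simply quoted with attribution to \cite{ChR}. Your sketch is correct and follows the same line as the source: for the Fermat--type polynomial $W=x^4+y^4+z^2$ the $W$--structure data decouples variable by variable, so the only content in the map $s$ is passing from the $A_1$--datum $\psi:\cL_3^{\otimes 2}\xrightarrow{\sim}\omega_{\log}$ to the $W$--structure map $\varphi_3=\psi^{\otimes 2}:\cL_3^{\otimes 4}\to\omega_{\log}^{\otimes 2}$, with the resulting $\pm 1$ ambiguity absorbed by the automorphism $-1$ of $\cL_3$. Your closing remark about compatibility with isotropy and the need to make sense of the $A_1$ factor over $\cM_{g,n,4}$ (since $A_1$ has degree $2$) is exactly the technical point that \cite{ChR} handles.
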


Each factor of $(A_r)_{g,n}$ in the product above is equipped with a universal $A_r$-structure. Abusing notation, we denote the universal line bundle over the $k$th factor of $(A_W)_{g, n}$ also by $\LL_k$.
By the universal properties of the $W$-structure on $\cW_{g,n}$, we have $s^*\LL_k\cong \LL_k$ for $1 \leq k \leq 3$. 

Given $\bv{h}=(h_1,\dots,h_n)$, let us denote 
\begin{align*}
&\cA_W(\bv{h})_{g,n}:= \\
 &\quad(A_3)_{g,n}(\varTheta_1^{h_1}, \dots \varTheta_1^{h_n})\times_{\cM_{g,n,4}}\dots \times_{\cM_{g,n,4}}(A_1)_{g,n}(\varTheta_3^{h_1},\dots,\varTheta_3^{h_n}).
\end{align*}

By the projection formula, we can pull back to $(A_W)_{0,n}$, and obtain the following expression for the genus $0$ correlators:
\begin{align*}
&\br{\tau_{a_1}(\phi_{h_1}),\dots, \tau_{a_n}(\phi_{h_n})}^{(\tilde E_7,G_{max})}_{0,n}= 32\int_{A_W(\bv{h})_{0,n}} \prod_{i=1}^n \psi_i^{a_i}\cup c_{top}\Big(R^1\pi_*\big(\bigoplus_{i=1}^3 \LL_i\big)^\vee\Big)
\end{align*}
The factor in front of the integral is the factor in \eqref{e:fjrcohft}.

From this description, we see that the FJRW theory in this case is a so-called \emph{twisted} theory. Thus we can use Givental's formalism to give an expression for the generating function of these correlators.  

\subsection{Twisted theory}\label{section: twistedTheory}
We will construct a \emph{twisted} FJRW theory whose correlators coincide with those of $(\tilde E_7,G_{max})$ in genus zero.
We first extend the state space
\[
\sH_{W,G}^{ext}:=\sH_{W,G}\oplus\bigoplus_{h\in G \setminus G^{nar}} \CC\cdot \phi_h.
\]
Any point $\bt\in\sH_{W,G}^{ext}$ can be written as $\bt=\displaystyle \sum_{h \in G}t^h\phi_h$. 
Let $i_k(h) := \langle \varTheta_{k}^h- q_k \rangle$, where $\langle - \rangle$ denotes the fractional part. Notice $i_k(h)=1-q_k$ exactly when $\varTheta_k^h=0$. 
Set 
\[
\deg_W(\phi_h) := 2\sum_{k=1}^3 i_k(h).
\]
For $h \in G^{nar}$, this definition matches the W-degree defined in \eqref{e:degw}.

We extend the definition of our FJRW correlators to include insertions $\phi_h$ in $\sH_{W,G}^{ext}$.  Namely, set 
\[
\br{\tau_{a_1}(\phi_{h_1}),\dots, \tau_{a_n}(\phi_{h_n})}_{0,n}^{(W,G)} = 0
\] 
if $h_i \in G \setminus G^{nar}$ for some $i$.

We would like to unify our definition of the extended FJRW correlators, by re-expressing them as integrals over $ (\widetilde A_W)_{0,n}$, a slight variation of $ (A_W)_{0,n}$, where instead of considering orbifold line bundles, we consider line bundles on the coarse curve with multiplicities (see the discussion prior to Proposition~\ref{p:linebundledegree}).
We will make use of the following lemma.

\begin{lemma}[\cite{ChR}]\label{l:linebndles}
Let $\cC$ be a $d$-stable curve with coarse underlying curve $C$, and let $M$ be a line bundle pulled back from $C$. If $l|d$, there is an equivalence between two categories of $l$th roots $\cL$ on $d$-stable curves:
\[
\set{\cL|\cL^{\otimes l}\cong M}\leftrightarrow \bigsqcup_{0\leq E<  \sum lD_i}\set{\cL|\cL^{\otimes l}\cong M(-E), \mult_{p_i}(\cL)=0}.
\]
where the union is taken over divisors $E$ which are linear combinations of (integer) divisors $D_i$ corresponding to the marked points $p_i$. 
\end{lemma}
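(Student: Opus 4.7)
The plan is to exhibit mutually inverse functors between the two categories; the equivalence then follows because morphisms on both sides are simply isomorphisms of line bundles, which commute with the assignments below.

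I would begin by pinning down the possible orbifold multiplicities of an $l$th root. Locally at each marked point $p_i$, a line bundle $\cL$ on $\cC$ is determined up to representability by $\mult_{p_i}(\cL) \in \ZZ/d\ZZ$, the character by which the stabilizer $\mu_d$ acts on the fiber. Since $M$ is pulled back from the coarse curve $C$, we have $\mult_{p_i}(M) = 0$, so $\cL^{\otimes l} \cong M$ forces
\[
    l \cdot \mult_{p_i}(\cL) \equiv 0 \pmod{d},
\]
whence $\mult_{p_i}(\cL) = (d/l)\, k_i$ for a unique $k_i \in \{0, 1, \dots, l-1\}$.

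The functor from the left-hand side to the right-hand side is obtained by twisting these residual multiplicities away. Let $p_i^{\mathrm{orb}}$ denote the orbifold marked point, so that tensoring with $\sO(p_i^{\mathrm{orb}})$ shifts multiplicity by $-1 \pmod d$, and recall that the pullback $D_i := \pi^{\ast} p_i$ equals $d \cdot p_i^{\mathrm{orb}}$ as Cartier divisors on $\cC$, so $\sO(D_i)$ has trivial multiplicity. I would set
\[
    \cL' := \cL \otimes \sO\!\Bigl(-\sum_{i} \tfrac{d}{l}(l - k_i)\, p_i^{\mathrm{orb}}\Bigr), \qquad E := \sum_{i} (l - k_i)\, D_i,
\]
with the convention that $l-k_i$ is replaced by $0$ when $k_i = 0$. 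A short multiplicity calculation then gives $\mult_{p_i}(\cL') = 0$, and raising to the $l$th power yields $(\cL')^{\otimes l} \cong M(-E)$, with the bound $0 \le l-k_i < l$ built in. For the inverse functor I would send $(\cL', E) \mapsto \cL := \cL' \otimes \sO\bigl(\sum_i \tfrac{d}{l} e_i\, p_i^{\mathrm{orb}}\bigr)$ and check that it recovers the original $\cL$.

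Both assignments are manifestly functorial, since tensoring with a fixed line bundle defines a functor and the decoration $E$ depends only on the isomorphism class of the formal neighborhoods of the marked points. The only genuine obstacle is bookkeeping: one has to keep the normalization of $p_i^{\mathrm{orb}}$ versus $D_i$ straight, and verify the multiplicity shifts under tensor products. Once the local model $[\CC/\mu_d]$ is used to confirm that $\mult_{p_i}(\sO(p_i^{\mathrm{orb}})) = -1 \pmod d$, the two constructions are mutually inverse by the identity $k_i + (l-k_i) \equiv 0 \pmod l$ together with the additivity of $\mult_{p_i}$, and the equivalence of categories follows.
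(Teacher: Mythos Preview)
Your argument is correct. The paper's proof is a single line: with $p$ the map forgetting stabilizers along the markings, the correspondence is $\cL \mapsto p^*p_*(\cL)$. Your explicit divisor twist is precisely what $p^*p_*$ does on objects---pushing forward along $p$ kills the nontrivial $\mu_d$-character at each $p_i$ by passing to invariants (equivalently, twisting down by the appropriate multiple of the orbifold point), and pulling back restores a line bundle on the $d$-stable curve with $\mult_{p_i}=0$; the divisor $E$ records the discrepancy in the $l$th power. So the two proofs are the same construction in different clothing: the paper's version is conceptually cleaner and makes functoriality and invertibility automatic (since $p^*p_*$ and its inverse $p_*p^*$ on the relevant subcategories are manifestly functors), while yours has the virtue of making the bijection on the index set $\{E : 0\le E < \sum l D_i\}$ completely explicit and transparent, at the cost of the sign and normalization bookkeeping you flagged.
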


\begin{proof}
Let $p$ denote the map which forgets stabilizers along the markings.  The correspondence is simply $\cL \mapsto p^* p_*(\cL)$.  
\end{proof}

\begin{definition}For $m_1,\dots, m_n\in \{\frac 14, \frac 12, \frac 34, 1\}$, consider the stack $\tilde A_3\left(m_1,\dots,m_n\right)_{g,n}$ classifying genus $g$, $n$-pointed, 4-stable curves equipped with fourth roots:
\begin{align*}
&\widetilde A_3\left(m_1,\dots,m_n\right)_{g,n}:=\\
&\quad\set{(\cC, p_1,\dots, p_n, \cL, \varphi)|\phi:\cL^{\otimes 4}\stackrel{\sim}{\to} \omega_{\log}(-\sum_{i=1}^n4m_iD_i),\; \mult_{p_i}(\cL)=0}, 
\end{align*}
where the integer divisors $D_i$ correspond to the markings $p_i$. 
\end{definition}
The moduli space $\widetilde A_3\left(m_1,\dots,m_n\right)_{g,n}$ also has a universal curve $\sC\to \widetilde{A}_3$ and a universal line bundle $\widetilde{\LL}$. We can define everything similarly for $A_1$ and replace it with $\widetilde{A}_1$.  

We now define an analogue of $(A_W)_{g,n}$, replacing $(A_3)_{g,n}$ with $(\widetilde{A}_3)_{g,n}$ in the first two factors, and $(A_1)_{g,n}$ with $(\widetilde{A}_1)_{g,n}$.  
For $1 \leq i \leq n$, let $m_i = (m_{1i}, \ldots, m_{3i})$ be a tuple of fractions satisfying $m_{1i},m_{2i} \in \{\frac 14, \frac 12, \frac 34, 1\}$, and $m_{3i} \in \{\frac 12, 1\}$.  Let $\bv{m}$ denote the $3 \times n$ matrix $(\bv{m})_{ki} = m_{ki}$.

Define
\[
\widetilde{A}_W(\bv{m})_{g,n} := \widetilde A_3(m_{11},\dots,m_{1n})_{g,n}\times_{\cM_{g,n,4}}\dots \times _{\cM_{g,n,4}} \widetilde A_1(m_{31},\dots,m_{3n})_{g,n}.
\]
$\widetilde{A}_W(\bv{m})_{g,n}$ carries three universal line bundles $\widetilde{\LL}_1,\widetilde{\LL}_2,\widetilde{\LL}_3$ satisfying 
\[
(\widetilde{\LL}_k)^{\otimes 4} \cong \omega_{\log}\left(- \sum_{i=1}^n 4m_{ki}D_i\right).
\] 
 
The above moduli space yields a uniform way of defining the extended FJRW correlators for $(\tilde E_7,G_{max})$.  Given $\phi_{h_1}, \ldots, \phi_{h_n} \in \sH_{W,G}^{ext}$, we define the $3\times n$ matrix 
\begin{equation*}
I(\bv{h}) =\left(\begin{matrix}
i_1(h_1)+\frac14 & \cdots & i_1(h_n)+\frac14\\
i_2(h_1)+\frac14 & \cdots & i_2(h_n)+\frac14\\
i_3(h_1)+\frac12 & \cdots & i_3(h_n)+\frac12
\end{matrix}\right).
\end{equation*}
Consider the following proposition.

\begin{proposition}\label{p:chern}  
On $\widetilde{A}_W(I(\bv{h}))_{0,n}$,  $\pi_*\big(\bigoplus_{k=1}^3\widetilde{\LL}_k\big)$ vanishes and $R^1\pi_*\big(\bigoplus_{k=1}^3\widetilde{\LL}_k\big)$ is locally free.  Furthermore, 
\[
\br{\tau_{a_1}(\phi_{h_1}),\dots, \tau_{a_n}(\phi_{h_n})}_{0,n}^{(\tilde E_7,G_{max})} =32 \int_{\tilde{A}_W(I(\bv{h}))_{0,n}} \prod \psi_i^{a_i}\cup c_{top}\Big(R^1\pi_*\big(\bigoplus_{k=1}^3\widetilde{\LL}_k\big)^\vee\Big).
\]
\end{proposition}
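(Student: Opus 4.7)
The plan is to translate between $A_W$--structures and $\widetilde{A}_W$--structures via Lemma~\ref{l:linebndles}, use Proposition~\ref{p:concave} to transfer concavity, and then handle the broad sectors by a Chern--class dimension argument. The starting point is the integral expression for the FJRW correlator quoted just before the proposition statement.

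\textbf{Narrow case.} Assume first that all $h_i \in G^{nar}$, so $\varTheta_k^{h_i} > 0$ for every $i,k$; then $i_k(h_i) = \varTheta_k^{h_i} - q_k$, giving $m_{ki} := i_k(h_i) + q_k = \varTheta_k^{h_i}$. I would apply Lemma~\ref{l:linebndles} to each of the three factors of the fiber product defining $A_W(\bv{h})_{0,n}$, taking $l = 1/q_k$ and $M = \omega_{\log}^{w_k}$ (which is pulled back from the coarse curve). The correspondence $\cL_k \mapsto p^* p_* \cL_k$ identifies a $(1/q_k)$-th root $\cL_k$ with prescribed multiplicities $\varTheta_k^{h_i}$ at the markings with a root of zero multiplicity satisfying $(p^* p_* \cL_k)^{\otimes (1/q_k)} \cong \omega_{\log}^{w_k}\bigl(-\sum_i (1/q_k) m_{ki} D_i\bigr)$, which is the defining relation of the $k$-th factor of $\widetilde{A}_W(I(\bv{h}))_{0,n}$. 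Taking the fiber product yields an isomorphism $A_W(\bv{h})_{0,n} \cong \widetilde{A}_W(I(\bv{h}))_{0,n}$ intertwining the universal line bundles $\cL_k$ and $\widetilde{\LL}_k$. Because $p$ is the coarse moduli morphism and $\cL_k \mapsto p^* p_* \cL_k$ preserves total cohomology on each fiber, $R^\bullet \pi_* \widetilde{\LL}_k \cong R^\bullet \pi_* \cL_k$. Proposition~\ref{p:concave} then gives $\pi_*(\bigoplus_k \widetilde{\LL}_k) = 0$ and $R^1 \pi_*(\bigoplus_k \widetilde{\LL}_k)$ locally free, and the Chern--class integrals on the two sides agree. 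Combined with the FJRW integral formula quoted at the start of the section, this yields the claim.

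\textbf{Broad case.} If $\varTheta_k^{h_i} = 0$ for some $(k,i)$, the LHS vanishes by the extended definition. For the RHS, the formula $i_k(h_i) = \langle -q_k \rangle = 1 - q_k$ forces $m_{ki} = 1$, so the twist $-4 m_{ki} D_i = -(1/q_k) D_i$ is strictly stronger than in any narrow configuration. On every irreducible component $C_v$ one therefore obtains a strengthened version of the component--wise degree inequality of Proposition~\ref{p:concave}, still giving $\deg(\widetilde{\LL}_k|_{C_v}) \le -1$ and hence the concavity conclusions $\pi_*(\bigoplus_k \widetilde{\LL}_k) = 0$ and $R^1 \pi_*(\bigoplus_k \widetilde{\LL}_k)$ locally free. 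However, by Riemann--Roch the rank of $R^1 \pi_*(\bigoplus_k \widetilde{\LL}_k)$ jumps by exactly one for every broad direction, whereas the complex dimension of $\widetilde{A}_W(I(\bv{h}))_{0,n}$ remains $n-3$. The degree of $c_{top}\bigl((R^1\pi_*\bigoplus_k \widetilde{\LL}_k)^\vee\bigr) \cup \prod \psi_i^{a_i}$ thus strictly exceeds $n-3$, and the integral on the RHS vanishes for dimensional reasons, matching the LHS.

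\textbf{Main obstacle.} The delicate step is the broad case. One must verify that the strict component--wise degree bound of Proposition~\ref{p:concave} genuinely continues to hold on every boundary stratum once the extra divisorial twist is imposed at broad markings, and then carefully match the Riemann--Roch rank increment of $R^1 \pi_* (\bigoplus_k \widetilde{\LL}_k)$ against the fixed moduli dimension $n-3$. Any loss of strictness in the degree estimate at a broad marking would break the vanishing of $\pi_*$ on the corresponding component, and any miscount of the rank would invalidate the dimension--based vanishing; the component--wise analysis must therefore be carried out explicitly.
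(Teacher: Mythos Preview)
Your narrow case is essentially the paper's argument: identify $A_W(\bv{h})_{0,n}$ with $\widetilde{A}_W(I(\bv{h}))_{0,n}$ via Lemma~\ref{l:linebndles} and transport concavity from Proposition~\ref{p:concave}.

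The broad case, however, has a genuine gap. Your vanishing argument rests on the claim that the degree of $c_{top}\bigl(R^1\pi_*\bigoplus_k\widetilde{\LL}_k\bigr)^\vee\cup\prod\psi_i^{a_i}$ \emph{strictly exceeds} $n-3$ once a broad insertion is present. But the rank of $R^1\pi_*\bigoplus_k\widetilde{\LL}_k$ is $\sum_{k,i}m_{ki}-n-1$, and nothing forces this to be at least $n-3$. For instance, take $n=6$ with $h_1=(0,\tfrac14,\tfrac12)$ (broad in the first direction), $h_2=h_3=h_4=(\tfrac12,\tfrac14,\tfrac12)$, $h_5=(\tfrac14,\tfrac34,\tfrac12)$, $h_6=\jw$. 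All integrality conditions are met, $\sum_{k,i}m_{ki}=8$, the rank is $1$, and the moduli dimension is $3$. With $\sum a_i=2$ the cohomological degree matches perfectly, so no dimension obstruction forces the integral to vanish. The phrase ``jumps by one'' presupposes a narrow baseline for the same $h_i$'s, but there is no such baseline: the broad element $h_1$ simply has no narrow counterpart, and for arbitrary narrow $h_i$'s the rank is not tied to $n-3$.

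What actually kills the broad correlator is that $c_{top}$ vanishes \emph{as a class}, not for dimension reasons. The paper's argument (following \cite{ChR,PrSh}) twists by the integer divisor $D_1$ at the broad marking: from $0\to\widetilde{\LL}_k\to\widetilde{\LL}_k(D_1)\to\widetilde{\LL}_k(D_1)|_{D_1}\to 0$ and the vanishing of $\pi_*$ on the first two terms one gets a short exact sequence $0\to\pi_*(\widetilde{\LL}_k(D_1)|_{D_1})\to R^1\pi_*\widetilde{\LL}_k\to R^1\pi_*\widetilde{\LL}_k(D_1)\to 0$, hence $c_{top}(R^1\pi_*\widetilde{\LL}_k)$ factors through $c_{top}\bigl(\pi_*(\widetilde{\LL}_k(D_1)|_{D_1})\bigr)$. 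Since $\widetilde{\LL}_k(D_1)|_{D_1}$ is a root of $\omega_{\log}|_{D_1}$, which is trivial along the marking, this line bundle is trivial and its $c_{top}$ is zero. That is the missing idea; your Riemann--Roch rank count and the strengthened degree inequality do not substitute for it.
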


\begin{proof}
This proof is given in \cite{ChR}, and \cite{PrSh}, so we only give an outline. Comparing $A_3$ and $\widetilde{A}_3$, we see that if $m_{ki}\in \set{\frac14, \frac 12, \frac34 }$ for all $k, i$, then $h_1, \ldots, h_n \in G^{nar}$. In this case, we can identify $\widetilde{A}_4^4(\bv{m})_{g,n}$ with $A_4^4(\bv{m})_{g,n}$ via Lemma~\ref{l:linebndles} and $R^k\pi_*(\widetilde{\LL}_k) = R^k\pi_*(\LL_k)$.

We must consider the case where $h_i \in G \setminus G^{nar}$ for some $i$.  In this case $(I(\bv{h}))_{ki} = 1$ for some $k$.  Thus it suffices to prove that if $m_{ki} = 1$ for some $i$ and $k$, then
$\pi_*\big(\bigoplus_{k=1}^3\widetilde{\LL}_k\big) = 0$
and
$c_{top}\Big(R^1\pi_*\big(\bigoplus_{k=1}^3\widetilde{\LL}_k\big)\Big)=0.$

To do this assume $m_{k1}=1$, and consider the integer divisor $D_1$ on $\widetilde{A}_W(\bv{m})_{0,n}$ corresponding to the first marked point. 
We get the long exact sequence
\begin{align*}
0 &\to \pi_*(\widetilde\LL_k) \to \pi_*(\widetilde\LL_k(D_1)) \to \pi_*(\widetilde\LL_k(D_1)|_{D_1})\\
 &\to R^1\pi_*(\widetilde\LL_k) \to R^1\pi_*(\widetilde\LL_k(D_1))\to R^1\pi_*(\widetilde\LL_k(D_1)|_{D_1})\to 0.
\end{align*}
As in Lemma~\ref{p:concave}, the first two terms are 0. 

With $\pi_*(\widetilde{\LL}_k(D_1))$, there is one alteration. If $\cC$ is reducible, and $v '$ corresponds to the irreducible component carrying the first marked point, then $\deg \widetilde \cL_k(D_1)|_{\cC_{v '}}<\#\text{nodes}(\cC_{ v '})$.  But any section of $\widetilde \cL_k(D_1)$ must still vanish on all other components of $C$, and by degree considerations it must therefore vanish on $\cC_{v '}$.

$R^1\pi_*(\widetilde\LL_k(D_1)|_{D_1})$ also vanishes, so we have
\[
0 \to \pi_*\widetilde\LL_k(D_1)|_{D_1}\to R^1\pi_*\widetilde\LL_k\to R^1\pi_*\widetilde\LL_k(D_1)\to 0. 
\]
and 
\[
c_{top}(R^1\pi_*\widetilde\LL_k)=c_{top}(\pi_*\widetilde\LL_k(D_1)|_{D_1})\cdot c_{top}(R^1\pi_*\widetilde\LL_k(D_1)). 
\]
But $c_{top}(\pi_*\widetilde\LL_k(D_1)|_{D_1})=0$, as $\widetilde{\LL}_k(D_1)|_{D_1} \cong \LL_k|_{D_1}$ is a root of $\omega_{\log}|_{D_1}$ which is trivial. Thus $c_{top}(R^1\pi_*\widetilde\LL_k)=0$ as well. 
\end{proof}

We may define a $\CC^*$-equivariant generalization of the above theory.  This will allow us to compute correlators which, in the non-equivariant limit coincide with the genus zero FJRW correlators above.
Given a point $(\cC,p_1,\dots,p_n,\widetilde \cL_1,\widetilde \cL_1, \widetilde\cL_3)$ in $(\widetilde{A}_W)_{g,n}$, let $\CC^*$ act on the total space of $\bigoplus_{k=1}^3 \widetilde{\cL}_k$ by multiplication of the fiber.  This induces an action on $(\widetilde{A}_W)_{g,n}$.  

Set $R=H^*_{\CC^*}(pt,\CC)[[s_0,s_1, \dots]]$, the ring of power series in the variables $s_0,s_1,\dots$ with coefficients in the equivariant cohomology of a point,  $H^*_{\CC^*}(pt, \CC)=\CC[\lambda]$. 
Define a multiplicative characteristic class $\bc$ taking values in $R$, by
\[
\bc(E):=\exp\left(\sum_\ell s_\ell \ch_\ell(E) \right)
\]
for $E\in K^*((\widetilde A_W)_{g,n})$. 

Define the twisted state space
\[
\sH^{tw}:=\sH_{W,G}^{ext}\otimes R \cong \bigoplus_{h\in G} R\cdot \phi_h
\] 
and extend the pairing by 
\[
\br{\phi_{h_1},\phi_{h_2}}:=\begin{cases}
    \exp(-N_{h_1}s_0) & \text{ if } h_1=(h_2)^{-1}\\
    0 & \text{otherwise}.
    \end{cases}
\]

We may also define \emph{twisted correlators} as follows.  Given $\phi_{h_1}, \ldots, \phi_{h_n}$ basis elements in $\sH^{tw}$, define the invariant
\[
\br{\tau_{a_1}(\phi_{h_1}),\dots,\tau_{a_n}(\phi_{h_n})}_{g,n}^{tw} :=32 \int_{\tilde{A}_W(I(\bv{h}))_{g,n}} \prod \psi_i^{a_i}\cup \bv{c}\Big(R\pi_*\big(\bigoplus_{k=1}^3\widetilde{\LL_k}\big)\Big).
\]
taking values in $R$.  We can organize these correlators into generating functions $\cF_g^{tw}$ and $\cZ^{tw}$ 
as in Section~\ref{ss:cohft axioms}. It is not clear at first glance that these correlators come from a CohFT. We will see in the next section, however, that they do indeed.

\subsection{From twisted theory to FJRW theory}\label{section: the last one}
Specializing to particular values of $s_\ell$ yield different twisted correlators. One particularly important specialization is the following.  
From the partition function $\cZ^{tw}$, if we set 
\begin{equation}\label{e:sd}
s_\ell=\begin{cases}
-\ln \lambda & \text{if } \ell=0\\
\frac{(\ell-1)!}{\lambda^\ell} & \text{otherwise}
\end{cases}
\end{equation} 
we obtain the (extended) FJRW theory correlators defined above. To see this first consider the following lemma.

\begin{lemma}\cite[Lemma 4.1.2]{ChR}\label{l:ctop}
With $s_\ell$ defined as in \eqref{e:sd}, the multiplicative class $\bc(-V)=e_{\CC^*}(V^\vee)$. In particular, the non-equivariant limit $\lambda\to 0$
yields the top chern class of $V^\vee$.
\end{lemma}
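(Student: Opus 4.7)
The plan is to verify the identity by the splitting principle, which reduces the statement to a pure power series manipulation. Writing $V$ formally as a direct sum of line bundles with Chern roots $x_1,\ldots,x_r$ where $r=\rank V$, additivity of the Chern character gives $\ch_0(-V)=-r$ and $\ch_\ell(-V)=-\sum_i x_i^\ell/\ell!$ for $\ell\ge 1$.

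Next I would substitute the values \eqref{e:sd} into the exponent defining $\bc(-V)$. The $\ell=0$ term contributes $r\ln\lambda$, and for $\ell\ge 1$ the factorials collapse to give
\[
-\sum_{\ell\geq 1}\frac{(\ell-1)!}{\lambda^\ell}\sum_i\frac{x_i^\ell}{\ell!}
= -\sum_i\sum_{\ell\geq 1}\frac{1}{\ell}\left(\frac{x_i}{\lambda}\right)^\ell
= \sum_i\ln\!\left(1-\frac{x_i}{\lambda}\right),
\]
using $-\sum_{\ell\geq 1} u^\ell/\ell=\ln(1-u)$. Combining and exponentiating,
\[
\bc(-V)=\lambda^r\prod_i\!\left(1-\frac{x_i}{\lambda}\right)=\prod_i(\lambda-x_i),
\]
which is precisely $e_{\CC^*}(V^\vee)$ under the natural fiber-scaling action: the equivariant Chern roots of $V^\vee$ are $\lambda-x_i$.

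For the non-equivariant limit, I would simply set $\lambda=0$ in $\prod_i(\lambda-x_i)$ to obtain $\prod_i(-x_i)=(-1)^r c_r(V)=c_r(V^\vee)$, since the Chern roots of $V^\vee$ are $-x_i$. This is the top Chern class of $V^\vee$, as claimed. The only genuine obstacle is bookkeeping the sign conventions for the $\CC^*$-action and the equivariant Chern roots of $V$ versus $V^\vee$; once these are pinned down, the argument is purely formal, being nothing more than the recognition of the Taylor series of $\ln(1-u)$ in the exponent.
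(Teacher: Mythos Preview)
Your argument is correct and is precisely the standard splitting-principle computation; the paper itself gives no proof but simply cites \cite[Lemma~4.1.2]{ChR}, where the same formal manipulation (recognizing the logarithmic series and exponentiating to obtain $\prod_i(\lambda-x_i)$) is carried out. There is nothing to add beyond your own caveat about fixing the sign convention for the $\CC^*$-weight on $V$ versus $V^\vee$.
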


By Proposition~\ref{p:concave}, $\pi_*(\bigoplus \widetilde{\LL}_k)=0$ and $\bc(R\pi_*(\widetilde{\LL}_k))=\bc(-R^1\pi_*(\widetilde{\LL}_k))$. Setting $s_\ell$ as in \eqref{e:sd} therefore yields 
\[
\bc\Big(R\pi_*\Big(\bigoplus_{k=1}^3\widetilde{\LL}_3\Big)\Big) = e_{\CC^*}\Big(R^1\pi_*\Big(\bigoplus_{k=1}^3\widetilde{\LL}_k\Big)^\vee\Big).
\] 
We have seen in Proposition~\ref{p:chern} that the FJRW correlators are obtained by the top chern class of $R^1\pi_*(\oplus_{k=1}^3\widetilde{\LL}_k)$, so we arrive at the following important result 
\begin{corollary}\label{c:nonequivlimit}
After specializing $s_\ell$ to the values in \eqref{e:sd}, 
\[
\lim_{\lambda\to 0} \cF_0^{tw}=\cF_0^{(\tilde E_7, G_{max})}.
\]
\end{corollary}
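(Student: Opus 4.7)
The plan is to combine the three preceding results---Proposition~\ref{p:concave} (concavity in genus zero), Proposition~\ref{p:chern} (identification of FJRW correlators as integrals over $\widetilde{A}_W(I(\bv{h}))_{0,n}$), and Lemma~\ref{l:ctop} (the specialization of $\bc$)---into a termwise comparison of $n$--point correlators. Since both $\cF_0^{tw}$ and $\cF_0^{(\tilde E_7,G_{max})}$ are generating series whose coefficients are integrals over the same moduli spaces $\widetilde{A}_W(I(\bv{h}))_{0,n}$, it suffices to match the cohomological insertions integrand by integrand and then pass to $\lim_{\lambda\to 0}$.

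First I would handle the generic case where all insertions $\phi_{h_i}$ lie in the narrow sector. By Proposition~\ref{p:concave} the genus zero FJRW theory for $(\tilde E_7, G_{max})$ is concave, so on every component of $\widetilde{A}_W(I(\bv{h}))_{0,n}$ the direct image $\pi_*\bigl(\bigoplus_{k=1}^3 \widetilde{\LL}_k\bigr)$ vanishes. Consequently, in $K$--theory one has the identity
\[
R\pi_*\Bigl(\bigoplus_{k=1}^3 \widetilde{\LL}_k\Bigr) \;=\; -\,R^1\pi_*\Bigl(\bigoplus_{k=1}^3 \widetilde{\LL}_k\Bigr).
\]
Applying Lemma~\ref{l:ctop} with the specialization \eqref{e:sd} then gives
\[
\bc\Bigl(R\pi_*\Bigl(\bigoplus_{k=1}^3 \widetilde{\LL}_k\Bigr)\Bigr) \;=\; e_{\CC^*}\Bigl(\bigl(R^1\pi_*\bigoplus_{k=1}^3 \widetilde{\LL}_k\bigr)^{\vee}\Bigr).
\]
Passing to the non--equivariant limit $\lambda\to 0$, the equivariant Euler class of a locally free sheaf degenerates to the top Chern class of its underlying bundle. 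Substituting this into the definition of $\langle \tau_{a_1}(\phi_{h_1}),\dots,\tau_{a_n}(\phi_{h_n})\rangle_{0,n}^{tw}$ and comparing with the formula in Proposition~\ref{p:chern} recovers exactly the FJRW correlator (including the factor $32$).

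Next I would address the extended sectors, i.e.\ the case where $h_i\in G\setminus G^{nar}$ for some $i$. On the FJRW side such correlators were defined to be zero. On the twisted side, the corresponding matrix $I(\bv{h})$ has an entry equal to $1$, and the argument at the end of the proof of Proposition~\ref{p:chern} shows that on $\widetilde{A}_W(I(\bv{h}))_{0,n}$ one still has $\pi_*\bigl(\bigoplus\widetilde{\LL}_k\bigr)=0$ while $c_{top}\bigl(R^1\pi_*\bigoplus\widetilde{\LL}_k\bigr)=0$. Hence after the $\lambda\to 0$ limit the integrand vanishes and so does the twisted correlator, matching the FJRW convention. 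Finally, one must verify that the pairing specializes consistently, which is immediate from the definition since for $h_1\in G^{nar}$ the product defining $\langle\phi_{h_1},\phi_{h_2}\rangle$ is empty and equals $1$.

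The only mildly delicate step is the second one---ensuring that the extended (non--narrow) sectors contribute zero on both sides in the limit; but this is entirely bookkeeping once Proposition~\ref{p:chern} is in hand. There is no recursive or analytic estimate to perform: the whole statement is a termwise identification of cohomology classes followed by a non--equivariant specialization, and assembling the correlators into $\cF_0$ then yields the claimed equality.
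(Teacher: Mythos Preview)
Your proposal is correct and follows the same route as the paper: use concavity (Proposition~\ref{p:concave}/Proposition~\ref{p:chern}) to write $R\pi_*=-R^1\pi_*$ in $K$--theory, apply Lemma~\ref{l:ctop} to identify $\bc$ with the equivariant Euler class, pass to the non--equivariant limit, and compare termwise with the formula of Proposition~\ref{p:chern}. Your explicit treatment of the extended (non--narrow) sectors and of the pairing is a bit more detailed than the paper's discussion, but the argument is the same.
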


\section{Computing the $R$-matrix}\label{section: Rmatrix}

In this section, we will begin by describing the so-called ``untwisted'' theory, which we will see is equivalent to the product of GW theories of a point, as in Section~\ref{ss:gw theory of point}. From there we will show how to go from this basic theory to the FJRW theory of the pair $(\tilde E_7, G_{max})$, and then using Theorem~\ref{theorem: cylg via Givental}, we will obtain the $R$-matrix for the GW theory of $\PP_{4,4,2}$. 

\subsection{Untwisted theory}
In addition to the specialization mentioned at the end of the previous section, if we specialize to $s_\ell=0$ for all $\ell$, we obtain the ``untwisted'' theory. Using the projection formula, we can push all calculations down to $\overline{\cM}_{0,n}$ and obtain (cf. \cite{ChR})
\[
\langle \tau_{a_1}(\phi_{h_1}) \dots \tau_{a_n}(\phi_{h_n}) \rangle_{0,n}^{un} := 32\int_{\widetilde{A}_W(I(\bv{h}))_{0,n}} \psi_1^{a_1} \dots \psi_n^{a_n} =
\left(\begin{matrix}
\sum_i a_i\\
a_1,\dots,a_n
\end{matrix}\right)
\] 
whenever the line bundle degree axiom (axiom FJR~\ref{a:line bundle}) is satisfied.
From the untwisted theory, we obtain a CohFT. We will denote the generating functions of the untwisted theory by $\cF_g^{un}$ and $\cZ^{un}$.

Recall the function $\mathcal{T}^{(m+1)}$ of \eqref{eq: tauOfKdv}. Take $m+1= |G|$.
It's then connected to $\cF_0^{un}$ as follows.

\begin{proposition}\label{prop: Fun to Fkdv}
  The genus zero potential $\cF_0^{un}$ is obtained from $\res_\hbar \ln \mathcal{T}^{(|G|)}$ by a linear change of the variables.
\end{proposition}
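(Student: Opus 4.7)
The plan is to show that the untwisted theory, which is ``topological'' in the sense that its CohFT classes are proportional to the fundamental class on $\overline\cM_{0,n}$, decomposes via characters of $G$ into $|G|$ independent copies of the GW theory of a point. This type of argument is carried out in a related setting in \cite{ChR}, and I expect the present case to follow the same pattern.

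First, I would show that each untwisted correlator factorises into a multinomial coefficient times an indicator function. The map $\widetilde A_W(I(\bv h))_{0,n}\to \overline\cM_{0,n}$ factors through $\overline\cM_{0,n,d}$ via étale maps on the relevant components, so the $\psi$--classes are pulled back from $\overline\cM_{0,n}$ and the integral reduces to a constant times $\int_{\overline\cM_{0,n}}\psi_1^{a_1}\cdots\psi_n^{a_n}$. Concretely,
\[
\br{\tau_{a_1}(\phi_{h_1})\cdots\tau_{a_n}(\phi_{h_n})}^{un}_{0,n} = c_n\cdot \mathbf{1}\Big[\textstyle\sum_i h_i = (n-2)\jw\text{ in }G\Big]\cdot\binom{n-3}{a_1,\ldots,a_n},
\]
where $c_n$ depends only on $n$ through the étale degree (computable from Proposition~\ref{p:linebundledegree} applied to $\widetilde A_W$) together with the factor $32$, and the indicator records the line--bundle--degree constraint of Axiom FJR~\ref{a:line bundle} in genus zero.

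Next, I would diagonalise the constraint by the characters of the finite abelian group $G$. Orthogonality gives
\[
\mathbf{1}\Big[\textstyle\sum_i h_i = (n-2)\jw\Big] = \frac{1}{|G|}\sum_{\chi\in\hat G}\chi(\jw)^{-(n-2)}\prod_{i=1}^n \chi(h_i),
\]
and substituting into the generating series $\cF_0^{un}$ makes the sum over $\bv h$ factor completely over the insertions. After choosing a linear change of variables of the form $u^{\ell,\chi}(t) = \lambda_{\chi,\ell}\sum_{h\in G}\chi(h)\, t^{\ell,\phi_h}$, possibly combined with a dilaton shift of $u^{0,\chi}$, and absorbing the $n$--dependent prefactor $c_n\chi(\jw)^{-(n-2)}/|G|$ into it, the potential takes the form
\[
\cF_0^{un}(t) = \sum_{\chi\in\hat G}\cF_0^{pt}\bigl(u^{\bullet,\chi}(t)\bigr).
\]
Since $|\hat G|=|G|$, re--indexing characters by $k=0,\dots,|G|-1$ identifies the right--hand side with $\res_\hbar\ln\mathcal T^{(|G|)} = \sum_k \cF_0^{pt}(\{u^{\ell,k}\})$.

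The main obstacle is the bookkeeping of normalisations. To absorb the prefactors into a single linear change of variables, one must verify that $c_n$ scales as $C\cdot A^n$ for constants $C$ and $A$ independent of $n$ (so that the $A^n$ can be absorbed into a $\chi$--dependent rescaling $u^{\ell,\chi}\mapsto (A/\chi(\jw))u^{\ell,\chi}$, while the residual $\chi$--dependent constant $C\chi(\jw)^2/|G|$ is eliminated by a shift on $u^{0,\chi}$ using the string/dilaton structure of $\cF_0^{pt}$). This scaling of $c_n$ follows directly from the étale degree formula $|G|^{n-1}/d^N$ of Proposition~\ref{p:linebundledegree}, and the remaining algebraic manipulations are routine.
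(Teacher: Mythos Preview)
Your approach is correct in outline but takes a genuinely different route from the paper. The paper does not diagonalise via characters of $G$; instead it invokes Teleman's classification (Theorem~\ref{theorem: teleman}): since the untwisted theory is semisimple and quasihomogeneous, it suffices to check that the $R$--matrix is trivial, and this is reduced to a factor--by--factor comparison of \emph{primary} potentials for the building blocks $A_3$ and $A_1$ separately, with an explicit (character--type) linear change written down in the $A_3$ case. Your argument treats all of $G$ at once and avoids Teleman entirely, which is more elementary and makes the idempotents of the untwisted Frobenius algebra visibly indexed by $\hat G$. The paper's route, by contrast, sits naturally in the $R$--matrix framework used elsewhere in the article.

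One point should be tightened. The dilaton shift you propose to absorb the residual constant $C\chi(\jw)^2/|G|$ is affine, not linear, so it would weaken the conclusion. In fact no shift is needed: because every nonzero genus--zero $n$--point correlator has $\sum_i a_i = n-3$, an $\ell$--dependent diagonal rescaling $u^{\ell,\chi}\mapsto \mu_\chi^{\,2\ell+1}\,u^{\ell,\chi}$ multiplies the $n$--point term by $\mu_\chi^{3n-6}$, and choosing $\mu_\chi^3=\chi(\jw)^{-1}$ absorbs the factor $\chi(\jw)^{2-n}$ exactly; a further rescaling $u^{\ell,\chi}\mapsto |G|^{(\ell-1)/3}u^{\ell,\chi}$ then removes the $1/|G|$. (The paper's own check is equally loose on this point: its explicit change for $A_3$ yields $\sum_i u_i^3/6 = 4\,F_0^{un\otimes 3}$, and the overall constant is not discussed. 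Either argument is completed by such a graded rescaling, or equivalently by a rescaling of $z$ together with a state--space rescaling.)
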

\begin{proof}
  It can be checked explicitly that the function $\cF_0^{un}$ defines a semisimple algebra. It is also quasihomogeneous and so we can apply Theorem~\ref{theorem: teleman}. We only need to show that the upper-triangular group element of that theorem is trivial.
  
  Both functions $\mathcal{T}^{(|G|)}$ and $\cF_0^{un}$ are generating functions of the products of the CohFTs. 
  It is enough by considering the ``factors'' on the both sides. In particular we show that the untwisted theories of $A_3$ and $A_1$ are connected by the linear changes of the variables to the genus zero potentials of $\mathcal{T}^{(4)}$ and $\mathcal{T}^{(2)}$ respectively. 
  
Because of the topological recursion relation in genus zero it's enough to show this on the small phase space only. Namely, on the level of primary potentials. Let $F_0^{(KdV\otimes k)}$ and $F_0^{un\otimes r}$ stand for the primary genus zero potential of $\mathcal{T}^{(k)}$ and $A_r$ respectively. We have:
\[
F_0^{KdV \otimes 4} = \frac{u_0^3}{6}+\frac{u_1^3}{6}+\frac{u_2^3}{6}+\frac{u_3^3}{6}.
\]

Using the selection rule we have:
\[
F_0^{un\otimes 3} = \frac{1}{2} t_0 t_1^2 + \frac{1}{2} t_0^2t_2 + \frac{1}{2} t_0t_3^2 + \frac{1}{6} t_2^3 + t_1t_2t_3.
\]

One can check that the desired linear change of the variables is given by $u_0 = t_0-t_1+t_2-t_3$, $u_1 = t_0+t_1+t_2+t_3$, $u_2 = -t_0+t_2- \im(t_1-t_3)$, $u_3 = -t_0+t_2+\im(t_1-t_3)$.

\end{proof}

Recall from Corollary~\ref{c:nonequivlimit} that we obtain FJRW theory in genus zero as the non-equivariant limit of the twisted theory. In order to obtain the twisted theory, we use the following proposition. Recall that the Bernoulli polynomials are defined by the equation
\[
\frac{te^{xt}}{e^t-1}=\sum_{n=0}^{\infty}B_n(x)\frac{t^n}{n!}
\]

\begin{proposition}\label{p:sympl}
Recall the numbers $i_k(h)$ and $q_k$ from Section~\ref{section: twistedTheory}. Consider the upper-triangular group element $R^{tw}$ acting diagonally:
\[
 R^{tw} \left( \phi_h \right):= \prod_{k=1}^3 \exp\left(\sum_{\ell\geq 0} s_\ell\frac{B_{\ell+1}\big(i_k(h)+q_k\big)}{(\ell+1)!}z^\ell\right) \phi_h.
\] 
The action of this upper-triangular group element satisfies
\begin{equation}\label{eq:action of the cones}
\hat R^{tw}\cdot\cZ^{un}=\cZ^{tw}.
\end{equation}
\end{proposition}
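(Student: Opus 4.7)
The plan is to compute $\bc\bigl(R\pi_*\widetilde\LL_k\bigr)$ explicitly as a tautological class on each $\widetilde A_W(I(\bv h))_{0,n}$ and then to recognize the resulting expression as the pull-back of the Givental quantization $\hat R^{tw}$ applied to $\cZ^{un}$. The key input is the Chiodo formula: a Grothendieck--Riemann--Roch computation for universal roots of powers of $\omega_{\log}$, which expresses $\ch_\ell(R\pi_*\widetilde\LL_k)$ as a sum of a marked-point part (in psi classes at markings), a kappa-class part supported on the smooth locus, and a boundary part (a psi-class polynomial pushed forward from the nodes). Applying this separately to each of the three factors of $\widetilde A_W$ and then exponentiating, $\bc(R\pi_*\widetilde\LL_k)$ factors into local contributions of exactly these three geometric types.

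Concretely, each marked point $p_i$ with monodromy $h_i$ should contribute the factor $\prod_{k=1}^3 \exp\bigl(\sum_{\ell\ge 0} s_\ell \tfrac{B_{\ell+1}(i_k(h_i)+q_k)}{(\ell+1)!}\psi_i^\ell\bigr)$; each node of monodromy $(h,h^{-1})$ contributes a symmetric polynomial in the two branch psi classes whose coefficients are Bernoulli values at $i_k(h)+q_k$; and the smooth locus contributes a $\kappa$-class factor with coefficient $B_{\ell+1}(q_k)/(\ell+1)!$. By inspection, the marking factor is precisely the diagonal action of $R^{tw}(\psi_i)$ as defined in the statement.

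I would then match these three geometric contributions with the three pieces of Givental's quantization formula displayed just before the proposition. The linear-in-$t$ piece of $\hat R^{tw}$ inserts $R^{tw}(\psi_i)$ at each marking, matching the marking factor tautologically; the $\hbar$-bilinear second-order operator opens a node and contracts its two branches through the Givental kernel $\bigl(\mathrm{id}-R^{tw}(z_1)R^{tw}(-z_2)^*\bigr)/(z_1+z_2)$, which should match the boundary contribution produced by Chiodo; and the translation piece $-(r_\ell)_1^\alpha \partial/\partial t^{\ell+1,\alpha}$ reproduces the $\kappa$-class contribution via the identity $\pi_*\psi^{\ell+1}=\kappa_\ell$ on $\overline\cM_{0,n}$ combined with the string/dilaton insertion of the flat unit.

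The main technical obstacle is the node term. One has to verify that the symmetric psi-polynomial produced by Chiodo at a node, together with the $\exp(-s_0)$ factors built into the twisted pairing $\langle-,-\rangle^{tw}$ that handle the broad directions $\varTheta_k^h=0$, reassembles into the Givental kernel above contracted against $\langle-,-\rangle^{tw}$. This amounts to the symplectic identity $r^{tw}(z)+r^{tw}(-z)^*=0$ with respect to the twisted pairing, which in turn follows from the Bernoulli reflection $B_{\ell+1}(1-x)=(-1)^{\ell+1}B_{\ell+1}(x)$ together with the observation that under $h\mapsto h^{-1}$ the quantity $i_k(h)+q_k$ is sent to $1-(i_k(h)+q_k)$ on the narrow directions, while on the broad directions the shift $B_1(1)-B_1(0)=1$ is exactly compensated by the $\exp(-s_0)$ built into the twisted pairing. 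Once this compatibility is in place, the three factorizations assemble termwise into $\hat R^{tw}\cdot\cZ^{un}$, proving \eqref{eq:action of the cones}.
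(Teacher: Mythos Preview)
Your approach is correct in outline, but it is \emph{not} the route the paper takes, and the difference is worth spelling out.

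The paper does not exponentiate Chiodo's formula and match the resulting tautological class against the graph expansion of $\hat R^{tw}$. Instead it regards both sides of \eqref{eq:action of the cones} as functions of the parameters $s_\ell$ and shows that each satisfies the same first–order linear system
\[
\frac{\partial \Phi}{\partial s_\ell}=\sum_{k=1}^3 P_\ell^{(k)}\Phi,
\]
where $P_\ell^{(k)}$ is the quantization of the $\ell$th term of $r^{tw}$. For $\hat R^{tw}\cdot\cZ^{un}$ this is immediate from the definition of quantization; for $\cZ^{tw}$ it is exactly one application of Chiodo's GRR formula for $\ch_\ell(R\pi_*\widetilde\LL_k)$, with no exponentiation required. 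Since both sides agree at $s_\ell=0$ (where they are both $\cZ^{un}$), uniqueness of the flow gives the identity. The Bernoulli reflection is invoked only to check that $R^{tw}$ really lies in the upper–triangular group.

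Your direct matching works, but buys the result at a higher combinatorial price: you must control the full exponential of the Chiodo class, assemble the boundary pieces over all dual graphs, and then recognise the Givental node kernel $(\mathrm{id}-R(z_1)R(-z_2)^*)/(z_1+z_2)$ inside that mess. This is essentially the content of the Faber--Shadrin--Zvonkine formula for the $R$–action on CohFTs, and it is a genuine theorem, not a bookkeeping exercise; your sketch of the node step (``reassembles into the Givental kernel'') is where the real work hides. The paper's ODE argument sidesteps this entirely by linearising in $s_\ell$, so that only a single Chern character, not the whole Chern class, has to be matched at each step.
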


\begin{proof}Note first that the identity 
$B_\ell(1-x)=(-1)^{\ell}B_\ell(x)$ implies $R^{tw}$ is in the upper-triangular Givental's group. 

Let the partition functions be written in the variables $t^{\ell,h}=q^{\ell,h}+\delta_{}$ with $h\in G$ and $\ell \ge 0$.
The proof is the same as the proof in \cite{ChZ,ChR,PrSh}. The basic idea is to consider both sides of \eqref{eq:action of the cones} as functions in the variables $s_\ell$. Then one shows that both sides satisfy 

\begin{equation}\label{e:diffeq}
\frac{\partial \Phi}{\partial s_\ell}=\sum_{k=1}^3 P_\ell^{(k)}\Phi
\end{equation}
where
\begin{align*}
  P_\ell^{(k)}=\frac{B_{\ell+1}(q_k)}{(\ell+1)!} &\frac{\partial}{\partial t^{\ell+1,\jw}}-\sum_{\stackrel{a \geq 0}{h \in G}} \frac{B_{\ell+1}(i_k(h)+q_k)}{(\ell+1)!}t^{a,h}\frac{\partial}{\partial t^{a+\ell,h}}
  \\ 
  +&\frac{\hbar}{2}\sum_{\stackrel{a+a'=\ell-1}{h, h' \in G}} (-1)^{a'}\eta^{h,h'}\frac{B_{\ell+1}(i_k(h)+q_k)}{(\ell+1)!} \frac{\partial^2}{\partial t^{a,h} \partial t^{a',h'}},
\end{align*}
and $\eta^{h,h'}$ denote the entries of the matrix inverse to the pairing. Since both $\cZ^{tw}$ and $\hat{R}^{tw}\cdot\cZ^{un}$ satisfy \eqref{e:diffeq} and both have the same initial condition (when $s_\ell=0$), they must then be equal.  

By the definition of quantization, it is clear that $\hat R^{tw}\cdot\cZ^{un}$ satisfies \eqref{e:diffeq}. 

That $\cZ^{tw}$ satisfies \eqref{e:diffeq} was proven by giving an expression for $\ch_\ell(R\pi_*(\widetilde{\LL}_k))$ using Grothendieck--Riemann--Roch. This was done in \cite{ChZ}, and generalized to the extended state space in \cite{ChR}.
\end{proof}

Consider $r_{GW} \in \mathrm{Hom}(\sH, \sH)[z]$ for $\sH = \sH^{ext}_{\tilde E_7,G_{max}}$ given by:
\[
r_{GW} \left( \phi_h \right):= \sum_{k=1}^3 \sum_{\ell\geq 0} s_\ell\frac{B_{\ell+1}\big(i_k(h)+q_k\big)}{(\ell+1)!}z^\ell \phi_h + \frac{1}{2}\left(\Gamma \left( \frac{3}{4} \right)\right)^4 \delta_{h\jw,\id} \ \phi_\jw.
\]
The following theorem gives a formula for the genus zero potential of the GW theory of $\PP^1_{4,4,2}$.
\begin{theorem}
For the upper-triangular group element $R_{GW} := \exp(r_{GW})$ and some lower-triangular group element $S$ the following holds:
\[
\cF_0^{\PP^1_{4,4,2}} = \lim_{\lambda\to 0}\res_{\hbar} \ln \left( \hat S^{-1} \cdot  \ \hat R_{GW} \cdot \cZ^{un} \right).
\]
\end{theorem}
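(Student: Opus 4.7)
The plan is to chain together the ingredients developed throughout the paper, starting from $\cZ^{un}$ and tracking successive Givental actions. By Proposition~\ref{prop: Fun to Fkdv}, the untwisted partition function $\cZ^{un}$ agrees, after a linear change of variables, with $\mathcal{T}^{(|G|)}$, so that it plays the role of the ``basic'' product of GW--theories of a point. In particular $\res_\hbar\ln\cZ^{un}$ already encodes, up to a linear change of variables, the potential of the basic CohFT.

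First, I would decompose $r_{GW} = r^{tw} + r^{\circ}$, where $r^{tw}$ is precisely the element of Proposition~\ref{p:sympl} built from the Bernoulli polynomials and $r^{\circ}$ is the rank--one term $\tfrac{1}{2}(\Gamma(3/4))^4\,\delta_{h\jw,\id}\,\phi_\jw$. The operator $r^\circ$ is nilpotent of rank one, sending the sector of $\phi_{\rho_1^3\rho_2^3\rho_3} = \phi_{\jw^{-1}}$ into the sector of $\phi_\jw$, and it commutes with $r^{tw}$ because the latter acts diagonally in the $\{\phi_h\}$ basis. Consequently $\hat R_{GW} = \hat R^{\circ}\cdot\hat R^{tw}$ at the level of quantizations. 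Applying Proposition~\ref{p:sympl} gives $\hat R^{tw}\cdot\cZ^{un}=\cZ^{tw}$, and specializing $s_\ell$ as in \eqref{e:sd} and invoking Corollary~\ref{c:nonequivlimit} yields
\[
\lim_{\lambda\to 0}\res_\hbar\ln\bigl(\hat R^{tw}\cdot\cZ^{un}\bigr)=\cF_0^{(\tilde E_7,G_{max})}.
\]

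The next step is to identify $\hat R^{\circ}$ with $(\hat R^{\sigma})^{-1}$ after the linear change of variables $\tilde\bt(\bt)$ of Theorem~\ref{theorem: CYLG}. Under this change, the unit $\phi_\jw$ is identified with $\Delta_0$ and $\phi_{\rho_1^3\rho_2^3\rho_3}$ with $\Delta_{-1}$, i.e.\ with the vectors $e_0$ and $e_m$ of Section~\ref{section: GiventalsAction} respectively. The nilpotent deformation $r^\circ$ therefore matches, as a matrix, the generator $\sigma\,E_{0,m}z$ of $R^\sigma$. The \emph{main obstacle} is checking that the scalar coefficient $\tfrac12(\Gamma(3/4))^4$ indeed agrees with $-\sigma = \tfrac{1}{\pi\Theta^2}$ after accounting for (i) the rescaling introduced by $\hat S_0^c$ (which includes a diagonal action $c^2$ on the $e_m$--component together with $\hbar\to c^2\hbar$) and (ii) the linear change $\tilde\bt(\bt)$. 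Using $\Theta = \sqrt{2\pi}/(\Gamma(3/4))^2$ and $c=1/\Theta$, this is a direct but delicate computation verifying that the two upper--triangular pieces match modulo the conjugation by $\hat S_0^c$.

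Finally, inverting Theorem~\ref{theorem: cylg via Givental} at the level of $\res_\hbar\ln$ gives
\[
\cF_0^{\PP^1_{4,4,2}} = \res_\hbar\ln\left((\hat S^\tau)^{-1}(\hat S_0^c)^{-1}(\hat R^\sigma)^{-1}\cdot\cZ^{(\tilde E_7,G_{max})}\right).
\]
Combining this with the identification $\hat R^\circ = (\hat R^\sigma)^{-1}$ (modulo $\hat S_0^c$--conjugation) and with the chain $\lim_{\lambda\to 0}\hat R^{tw}\cdot\cZ^{un}\mapsto\cZ^{(\tilde E_7,G_{max})}$ just obtained, we get
\[
\cF_0^{\PP^1_{4,4,2}} = \res_\hbar\ln\left(\hat S^{-1}\cdot\lim_{\lambda\to 0}\hat R_{GW}\cdot\cZ^{un}\right)
\]
for $\hat S := \hat S_0^c\cdot\hat S^\tau$ (absorbing the middle rescaling into $S$; since the theorem only asks for \emph{some} lower--triangular group element, this is harmless). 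Because every step is an identity at the level of the genus--zero potential, the equality holds as required, and the desired $R$--matrix for the GW theory of $\PP^1_{4,4,2}$ is exhibited explicitly through $\hat R_{GW}$ acting on the product of GW theories of a point.
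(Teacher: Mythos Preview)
Your overall strategy is the same as the paper's: compose Proposition~\ref{p:sympl} and Corollary~\ref{c:nonequivlimit} to reach $\cF_0^{(\tilde E_7,G_{max})}$ from $\cZ^{un}$, then invert the CY/LG action of Theorem~\ref{theorem: cylg via Givental} to arrive at $\cF_0^{\PP^1_{4,4,2}}$. The paper organizes this more directly: after inverting, it pulls $(\hat R^\sigma)^{-1}$ inside the $\lambda\to 0$ limit (it is $\lambda$--independent) and uses that quantization of the upper--triangular group is a homomorphism to write $\hat R^{-1}\hat R^{tw}=(R^{-1}R^{tw})\hat{\ }$. No commutativity of the infinitesimal generators is invoked, and $\hat S_0^c$ remains inside $\hat S$ throughout; the extension of $R^\sigma$ to $\sH^{tw}$ is carried out with its original coefficient $\sigma$, not via conjugation by $\hat S_0^c$.

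There is, however, a real gap in your argument. Your claim that $r^\circ$ commutes with $r^{tw}$ ``because the latter acts diagonally'' is not valid: a diagonal operator commutes with the elementary map $\phi_{\jw^{-1}}\mapsto\phi_\jw$ only when its eigenvalues at $\jw$ and at $\jw^{-1}$ coincide. Here the diagonal entries of $r^{tw}$ involve $B_{\ell+1}(q_k)$ at $\jw$ and $B_{\ell+1}(1-q_k)$ at $\jw^{-1}$, and since $B_{\ell+1}(1-x)=(-1)^{\ell+1}B_{\ell+1}(x)$ these differ for even $\ell$. Hence the factorization $\hat R_{GW}=\hat R^\circ\cdot\hat R^{tw}$ does not follow for the reason you give. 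Drop the commutativity step and work, as the paper does, with the product $(R^\sigma)^{-1}R^{tw}$ directly; your detour through an $\hat S_0^c$--conjugation to match the scalar in front of $\delta_{h\jw,\id}\,\phi_\jw$ then also becomes unnecessary.
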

Due to Proposition~\ref{prop: Fun to Fkdv} partition function $\cZ^{un}$ differs from the partition function $\mathcal{T}^{(m+1)}$ by a linear change of the variables and we get indeed the $R$-matrix reconstructing the GW theory of $\PP^1_{4,4,2}$ from the product of the GW theories of a point.

\begin{proof}
Let $\hat R := \hat R^\sigma$ and $\hat S := \hat S_0^c \cdot \hat S^\tau$ as in Theorem~\ref{theorem: cylg via Givental}.
By composing Corollary~\ref{c:nonequivlimit} and Proposition~\ref{p:sympl} we get:
\[
\cF_0^{(\tilde E_7,G_{max})} = \lim_{\lambda\to 0} \cF_0^{tw} = \lim_{\lambda\to 0} \res_{\hbar} \ln \left( \hat R^{tw} \cdot \cZ^{un} \right).
\]

From Theorem~\ref{theorem: cylg via Givental} we get:
\[
\cF_0^{(\tilde E_7,G_{max})} = \res_\hbar \ln \left( \hat R \cdot \hat S \cdot \cZ^{\PP^1_{4,4,2}} \right),
\]

Note that when considering Givental's action in genus zero, only the genus zero correlators of a CohFT given contribute to the Givental-transformed CohFT. So we can turn this around to obtain. 
\[
\cF_0^{\PP^1_{4,4,2}} = \res_\hbar \ln \left(\hat S^{-1} \cdot \hat R^{-1} \cdot \cZ^{(\tilde E_7,G_{max})} \right),
\]

Consider the extension of $R^\sigma$ to the state space $\cH^{tw}$. This can be done because $R^\sigma$ acts non-trivially only on the vector $\phi_{\jw^{-1}}$ belonging both to $\cH_{W,G}$ and $\cH^{tw}$. 

Slightly abusing the notation we denote by the same letter $r^\sigma$ the operator on $\cH^{tw}$, such that $r^\sigma (\phi_h) = \sigma \delta_{h\jw,\id} \ \phi_\jw$. And again $R^\sigma = \exp(r^\sigma z)$. It's clear that the action of the differential operator $\hat R^\sigma$ is not affected by the limit $\lambda \to 0$. The same is true of $\hat S^{-1}$.
\[
\cF_0^{\PP^1_{4,4,2}} = \lim_{\lambda\to 0}\res_{\hbar} \ln \left( \hat S^{-1} \cdot \hat R^{-1} \cdot  \ \hat R^{tw} \cdot \cZ^{un} \right).
\]

By comparing the formal power series in $\hbar$ we get:
\begin{align*}
\cF_0^{\PP^1_{4,4,2}} &= \lim_{\lambda\to 0}\res_{\hbar} \ln \left( \hat S^{-1} \cdot  \ \hat R^{-1} \cdot \hat R^{tw} \cdot \cZ^{un} \right),
    \\
    &= \lim_{\lambda\to 0} \res_{\hbar} \ln \left( \hat S^{-1} \cdot  \ \left( R^{-1} R^{tw} \right)\hat{\ } \cdot \cZ^{un} \right).
\end{align*}
This completes the proof.
\end{proof}

\appendix

\section{Gromov--Witten potential of $\PP^1_{4,4,2}$}\label{section: appendix}
In order to shorten the formulae let $t_k := t_{1,k}$ for $1 \le k \le 3$, $t_l := t_{2,l-3}$ for $4 \le l \le 6$, $t_7 := t_{3,1}$.
Let also $t_0$ correspond to the unit, $t_8$ to the hyperplane class of the cohomology ring of $\PP^1$ and $x = x(q)$, $y = y(q)$, $z = z(q)$, $w = w(q)$ be as in Section~\ref{section: GW of X4}.
Then the following expression for the genus zero GW ponential of $\PP^1_{4,4,2}$ was first announced by the first named author in \cite{B1}.

\begingroup
\everymath{\scriptstyle}
\tiny
\begin{align*}
  & F_0^{\PP^1_{4,4,2}} = -\frac{\left(x^6-5 x^4 y^2-5 x^2 y^4+y^6\right) }{4128768}\left(t_3^8+t_6^8\right)+\frac{x y \left(x^4+14 x^2 y^2+y^4\right) }{294912}t_3^2 t_6^2 \left(t_3^4+t_6^4\right)+\frac{z \left(8 x^4+8 y^4+19 z^4\right)}{294912} t_6^3 t_7 t_3^3
  \\
  &+\frac{x \left(x^2+y^2\right)^2 }{73728}\left(t_2 t_3^6+t_5 t_6^6\right)+\frac{y \left(x^2+y^2\right)^2 }{73728}\left(t_3^6 t_5+t_2 t_6^6\right)+\frac{5 x^2 y^2 \left(x^2+y^2\right) }{73728}t_6^4 t_3^4-\frac{\left(x^4-6 x^2 y^2+y^4\right) }{30720}\left(t_1 t_3^5+t_4 t_6^5\right)
  \\
  &-\frac{\left(x^4-3 x^2 y^2\right) }{3072}\left(t_2^2 t_3^4+t_5^2 t_6^4\right)+\frac{\left(3 x^2 y^2-y^4\right) }{3072}\left(t_3^4 t_5^2+t_2^2 t_6^4\right)+\frac{x y z\left(x^2+y^2\right) }{6144}t_3 t_6 \left(t_3^4+t_6^4\right) t_7+\frac{x^2 y \left(x^2+4 y^2\right)}{6144}t_3^2 t_6^2 \left(t_2 t_3^2+t_5 t_6^2\right)
  \\
  &+\frac{x y^2 \left(4 x^2+y^2\right) }{6144}t_3^2 t_6^2 \left(t_3^2 t_5+t_2 t_6^2\right)+\frac{x y \left(x^2+y^2\right) }{1536}\left(t_3^2 t_6^2 \left(t_1 t_3+t_4 t_6\right)+t_2 t_5 \left(t_3^4+t_6^4\right)\right)+\frac{x^2 y^2 }{1536}t_3 t_6 \left(t_3^3 t_4+t_1 t_6^3\right)
  \\
  &+\frac{x z\left(x^2+7 y^2\right) }{1536}t_3 t_6 t_7\left(t_3^2 t_5+t_2 t_6^2\right)+\frac{y z\left(7 x^2+y^2\right)}{1536}t_3 t_6 t_7\left(t_2 t_3^2+t_5 t_6^2\right)+\frac{x y \left(x^2+y^2\right)}{512} t_3^2 t_6^2\left(t_2^2+t_5^2\right) +\frac{x^2 y^2 }{384} \left(t_3^4+t_6^4\right) t_7^2
  \\
  &+\frac{x \left(x^2+y^2\right)}{384} \left(t_1 t_2 t_3^3+t_4 t_5 t_6^3\right)+\frac{y \left(x^2+y^2\right)}{384} \left(t_1 t_3^3 t_5+t_2 t_4 t_6^3\right)+\frac{\left(x^2+y^2\right) z}{384} t_7\left(t_3^3 t_4+t_1 t_6^3\right) +\frac{x^3}{384}\left(t_2^3 t_3^2+t_5^3 t_6^2\right)
  \\
  &+\frac{y^3 }{384} \left(t_3^2 t_5^3+t_2^3 t_6^2\right)-\frac{\left(3 w-x^2+2 y^2\right)}{384} \left(t_2^4+t_5^4\right)+\frac{x y^2 }{128} t_2 t_5 \left(t_3^2 t_5+t_2 t_6^2\right)+\frac{x^2 y }{128} t_2 t_5 \left(t_2 t_3^2+t_5 t_6^2\right)+\frac{x^2 y^2}{128} t_2 t_5 t_6^2 t_3^2
  \\
  &+\frac{x y \left(x^2+y^2\right)}{128} t_6^2 t_7^2 t_3^2+\frac{\left(2 x^2-y^2-3 w\right)}{96} t_7^4+\frac{x y^2}{64} t_3 t_6 \left(t_2 t_3 t_4+t_1 t_5 t_6\right)+\frac{x^2 y}{64} t_3 t_6 \left(t_3 t_4 t_5+t_1 t_2 t_6\right)
  \\
  &+\frac{x y z}{192} t_3 t_6 t_7 \left(3 t_2^2+3 t_1 t_3+3 t_5^2+3 t_4 t_6+4 t_7^2\right)+\frac{z\left(x^2+y^2\right)}{64} t_2 t_5 t_6 t_7 t_3-\frac{\left(w-x^2\right) }{64} \left(2 t_5^2 t_7^2+t_2^2 t_5^2+2 t_2^2 t_7^2\right)
  \\
  &-\frac{\left(2 w-x^2+y^2\right)}{64} \left(t_1^2 t_3^2+t_4^2 t_6^2\right)+\frac{x y^2}{32} \left(t_2 t_7^2 t_3^2+t_5 t_6^2 t_7^2\right)+\frac{x^2 y }{32} \left(t_5 t_7^2 t_3^2+t_2 t_6^2 t_7^2\right)+\frac{x y }{32} \left(2t_1 t_2 t_5 t_3+t_1^2 t_6^2+t_4^2 t_3^2\right)
  \\
  &-\frac{w}{32} \left(t_4 t_5^2 t_6+t_1 t_2^2 t_3\right)+\frac{\left(x^2-y^2-w\right)}{32} \left(t_1 t_5^2 t_3+ t_2^2 t_4 t_6\right)-\frac{\left(w-x^2\right)}{16} \left(t_1 t_7^2 t_3+ t_1 t_4 t_6 t_3+ t_4 t_6 t_7^2\right)+\frac{x y }{16} t_2 t_5\left(t_4t_6+2 t_7^2\right)
  \\
  &+\frac{x z}{16} t_7\left(t_2 t_4 t_3+t_1 t_5 t_6\right)+\frac{y z }{16} t_7 \left(t_3t_4 t_5+t_1 t_2 t_6\right)+\frac{x}{8} \left(t_1^2 t_2+t_4^2 t_5\right)+\frac{y}{8} \left(t_2 t_4^2+ t_1^2 t_5\right)+\frac{z}{4} t_1 t_4 t_7
  \\
  &+\frac{1}{8} t_0 \left(t_2^2+t_5^2+2 t_7^2+2 t_1 t_3+2 t_4 t_6\right)+\frac{1}{2} t_0^2 t_8.
\end{align*}
\endgroup



\end{document}